\documentclass[11pt]{article}
\usepackage{amsfonts}
\usepackage{color}
\usepackage{amsmath,amssymb,comment}
\newtheorem{theo}{Theorem}[section]
\newtheorem{lem}[theo]{Lemma}

\newtheorem{defi}[theo]{Definition}
\newcommand{\proof}{{\sc Proof.} \quad}

\newcommand{\be}{\begin{equation} \label}
	\newcommand{\ee}{\end{equation}}
\newcommand{\bee}{\begin{equation*}}
	\newcommand{\eee}{\end{equation*}}
\newcommand{\bea}{\begin{eqnarray}\label}
	\newcommand{\eea}{\end{eqnarray}}
\newcommand{\bas}{\begin{eqnarray*}}
	\newcommand{\eas}{\end{eqnarray*}}
\newcommand{\bit}{\begin{itemize}}
	\newcommand{\eit}{\end{itemize}}
\newcommand{\ali}{\begin{aligned}}
	\newcommand{\eali}{\end{aligned}}
\newcommand{\spl}{\begin{split}}
	\newcommand{\eespl}{\end{split}}	

\newcommand{\qed}{\hfill$\Box$ \vskip.2cm}
\newcommand{\nn}{\nonumber}
\newcommand{\R}{\mathbb{R}}
\newcommand{\N}{\mathbb{N}}
\newcommand{\pO}{\partial\Omega}

\newcommand{\eps}{\varepsilon}

\newcommand{\wsto}{\stackrel{\star}{\rightharpoonup}}
\newcommand{\hra}{\hookrightarrow}
\newcommand{\io}{\int_\Omega}
\newcommand{\na}{\nabla}
\newcommand{\Del}{\Delta}
\newcommand{\del}{\delta}
\newcommand{\al}{\alpha}
\newcommand{\bet}{\beta}

\newcommand{\lam}{\lambda}

\newcommand{\pa}{\partial}
\newcommand{\bom}{\overline{\Omega}}
\newcommand{\Om}{\Omega}

\newcommand{\wh}{\widehat}

\newcommand{\cd}{\cdot}

\newcommand{\vp}{\varphi}
\newcommand{\lbal}{\left\{ \begin{array}{l}}
	\newcommand{\lball}{\left\{ \begin{array}{ll}}
		\newcommand{\ear}{\end{array} \right.}

	\newcommand{\abs}{\\[5pt]}

	\newcommand{\adb}{\allowdisplaybreaks}
	\newcommand{\tm}{T_{max}}
	\newcommand{\tmn}{T_{max,N}}

	\newcommand{\F}{{\mathcal{F}}}

	\newcommand{\whu}{\wh{u}}
	
\begin{document}
		\adb
		\title{$L^\infty$ blow-up in the fourth-order\\ Jordan-Moore-Gibson-Thompson equation}
		\author{
			Tahir Boudjeriou\footnote{t.boudjeriou@univ-boumerdes.dz}\\
			{\footnotesize Institute of Electrical \& Electronic Engineering}\\
			{\footnotesize University of Boumerdes,  Boumerdes, 35000, Algeria}
			\and
			Filippo Dell'Oro\footnote{filippo.delloro@polimi.it}\\
			{\footnotesize Politecnico di Milano - Dipartimento di Matematica}\\
			{\footnotesize Via Bonardi 9, 20133 Milano, Italy}}
		\date{}
		\maketitle
		\begin{abstract}
			\noindent 
			{We investigate the fourth-order Jordan-Moore-Gibson-Thompson equation
				\begin{equation*}
				u_{tttt}+\al u_{ttt}+\bet u_{tt}-\gamma \Del u_{tt}-\del \Del u_{t}-\varrho \Del u=(f(u))_{ttt}+(f(u))_{tt}
				\end{equation*}
				under homogeneous Dirichlet boundary conditions in a smooth bounded domain $\Om \subset \R^n$ with $n\leq 3$, where
				$\al \in \R$ and $\bet,\gamma,\del,\varrho>0$. First, for any suitably regular initial data and assuming that $f\in C^4(\R)$ with $f(0)=0$, we prove that the corresponding initial-boundary value problem admits a unique maximal strong solution $u$ on $(0, \tm)$.
				Second, by combining three time-integrated formulations with  a series of resolvent-regularized energy estimates, we show that gradient blow-up is ruled out in the sense that
				\begin{equation}\label{EE}
				\tm<\infty \quad\,\, \Rightarrow\quad\,\, \limsup_{t\nearrow \tm}\|u(\cd,t)\|_{L^{\infty}(\Om)}=\infty. \tag{$\star$}
				\end{equation}	
				Finally, assuming  in addition that $f$ is convex and grows superlinearly in the sense that 
				\begin{equation*}
				f''(\xi)\geq 0 \;\; \mbox{ for all}\;\;\xi \in\R, \qquad \frac{f(\xi)}{\xi}\rightarrow +\infty\;\;\mbox{ as}\;\; \xi\rightarrow +\infty , \qquad \int_{\xi_{0}}^{\infty}\frac{d\xi}{f(\xi)}<\infty \;\; \mbox{ for some}\;\; \xi_{0}>0,
				\end{equation*}
				we employ  the extensibility criterion (\ref{EE})  to prove that, for a suitable class of initial data, the solution blows up in finite time in the $L^\infty$-norm. To the best of our knowledge, this provides the first rigorous description of finite-time  blow-up for a genuinely nonlinear equation of fourth order in time.
			}\abs

			\medskip
			\noindent {\bf Key words:} Fourth-order Jordan-Moore-Gibson-Thompson equation; finite-time blow-up; extensibility criterion; nonlinear acoustics.
			
			\smallskip
			\noindent
			{\bf MSC 2020:} 35L15, 35B44, 35L05.
		\end{abstract}
		
		%
		%
		%
		%
		%
		\newpage
		\section{Introduction}\label{intro}
		In a smooth bounded domain $\Om\subset \R^n$ with $n\leq 3$, we consider the initial-boundary value problem for the fourth-order Jordan-Moore-Gibson-Thompson equation
		\be{0}
		\lball
		u_{tttt}+\al u_{ttt}+\bet u_{tt} -\gamma \Del u_{tt}-\del\Del u_{t}-\varrho \Del u= (f(u))_{ttt}+(f(u))_{tt},
		\\[1mm]
		u_{|x\in\pO}=0,\\[1.7mm]
		u(x,0)=u_0(x), \;\;\;u_{t}(x,0)=u_{1}(x), \;\;\;u_{tt}(x,0)=u_{2}(x), \;\;\;u_{ttt}(x,0)=u_{3}(x),
		\ear
		\ee
		where $\al \in \R$ and  $\bet,\gamma,\del,\varrho>0$, {whereas
			$u_0,u_1,u_2,u_3$ are prescribed initial data.} The nonlinear term $f$ is chosen so as to capture, in a natural fashion, the model case 
		\be{IEE1}
		f(u)=k u^2, \quad\,\, k>0,
		\ee 
		but our analysis applies to significantly more general classes of nonlinearities.
		
		\subsection{Background and motivations}
		The modeling of nonlinear acoustics relies, inter alia, on the Jordan-Moore-Gibson-Thompson (JMGT) equation with viscoelastic effects, which plays an important role in the description of high-frequency wave propagation in fluids and gases; see for instance \cite{Lebon1989, Rudenko1977}. Such effects are captured mathematically by memory terms and are physically associated with molecular relaxation, whereby the pressure at a given time depends not only on the instantaneous density but also on its values at preceding times, hence on the history of the process. The mathematical analysis of nonlinear JMGT models with memory has so far focused mainly on semilinear equations involving various types of forcing terms, depending either on the solution itself \cite{Caixeta2016, Las3, Nik4} or on its derivatives \cite{Nik4, Nik5, Nik6}. One such model is the nonlocal JMGT equation
		\be{ITE6}
		\tau u_{ttt}+a u_{tt}-c^2\Delta u-b\Delta u_t
		+\int_0^t g(t-s)\Delta u(s)\,ds
		=\kappa(u^2)_{tt}
		\ee
		where $\tau>0$ denotes the relaxation time, $b>0$ is a diffusivity  parameter, $c^2$ is the squared  speed of sound and $a>0$ represents a friction coefficient. Finally $\kappa=\frac{1}{c^2}\left(\frac{B}{2A}+1\right)$ where $A,B>0$ are parameters associated with the nonlinear interactions.	
		The fourth-order JMGT equation appearing in problem \eqref{0} can be derived from (\ref{ITE6}) when the memory kernel $g$ is the negative exponential,
		which is the most relevant kernel from the physical viewpoint.
		More precisely, 
		for 
		$$g(s)=d e^{-s}, \quad\,\, d\in (0,c^2), $$
		differentiating \eqref{ITE6} with respect to time and adding \eqref{ITE6} we obtain
		$$
		\tau u_{tttt}+(a+\tau)u_{ttt}
		+a u_{tt}
		-b\Delta u_{tt}
		-(c^2+ b)\Delta u_t
		-( c^2-d)\Delta u
		=\kappa(u^2)_{ttt}+\kappa(u^2)_{tt}.
		$$
		Setting 
		$$
		\alpha = \frac{a+\tau}{\tau},\qquad \beta = \frac{a}{\tau},\qquad \gamma = \frac{b}{\tau}, \qquad \delta=\frac{c^2+b}{\tau},
		\qquad \varrho = \frac{c^2-d}{\tau},\qquad k =  \frac{\kappa}{\tau},
		$$		
		we obtain the nonlinear equation appearing in \eqref{0} with $f$ given by \eqref{IEE1}.
		In the linear case
		when $f\equiv0$, the equation reduces to the so-called fourth-order Moore--Gibson--Thompson (MGT) equation
		introduced in \cite{Dell2}
		\be{ITE5}
		u_{tttt}+\al u_{ttt}+\bet u_{tt}-\gamma \Del u_{tt}-\del \Del u_{t}-\varrho \Del u=0.
		\ee 
		In that paper, for $\al, \bet, \gamma, \del,\varrho>0 $ and
		under homogeneous Dirichlet boundary conditions, it has been proved that \eqref{ITE5}
		generates a linear strongly continuous semigroup on the space 
		$W^{1,2}_0(\Omega)\times W^{1,2}_0(\Omega)\times W^{1,2}_0(\Omega)\times L^2(\Omega)$ which is exponentially stable if and only if
		\be{STT}
		\gamma>\frac{\delta}{\alpha}
		\qquad\mbox{and}\qquad
		\left(\beta-\frac{\varrho\alpha}{\delta}\right)>
		-\lambda_1\left(\gamma-\frac{\delta}{\alpha}\right),
		\ee 
		where $\lambda_1>0$ is the first eigenvalue of the Laplace-Dirichlet operator $-\Delta$ on $\Omega$.
		It is important to mention that the fourth-order MGT equation \eqref{ITE5} plays a key role in the study of the stability and instability properties of the linearized version of \eqref{ITE6}, see \cite{Dell3}. In addition, it naturally arises in the context of heat conduction modeling with generalized heat-conduction laws, see for instance 
		\cite{Chand, Dell1, Green1, Green2, Muller, Quint4,Quin,Roy, Strau, Tzou1, Tzou2} and references therein.

		\subsection{Earlier literature}	
		The analysis of finite-time blow-up for solutions to nonlinear JMGT equations has received great attention in recent literature.		For the classical JMGT equation with homogeneous Dirichlet boundary conditions
		\be{ITE9}
		\lball
		\tau u_{ttt}+\alpha u_{tt}
		-\beta\Delta u_t-\gamma\Delta u=k(u^2)_{tt},\\
			u_{|x\in\pO}=0,
		\ear
		\ee
		finite-time blow-up of some solutions, in the sense of unboundedness of the $L^\infty$-norm of $u$, was established in \cite{Nik3} for $\tau,\beta,\gamma>0$ and $\alpha\in\mathbb R$, where the quadratic nonlinearity \eqref{IEE1} is replaced by a general convex superlinearly growing function $f\in C^3(\mathbb R)$. In that article, after establishing local existence and uniqueness of strong solutions together with a sharp extensibility criterion, the Kaplan first-eigenvalue method \cite{PQUI} was used to derive the finite-time blow-up result.	
		In \cite{Chen2026} the nonlinear term $k(u^2)_{tt}$ is replaced (up to a multiplicative constant) by
		$\partial_t\left(|u_t|^2+|\nabla u|^2\right)$
		and finite-time blow-up, together with upper bounds for the lifespans of solutions, is obtained for the corresponding Cauchy problem in $\R^n$. In \cite{Chen1} the nonlinear term $|u_t|^p$ with $1<p\leq (n+1)/(n-1)$ is considered, and finite-time blow-up of solutions is shown. The nonlinearity $|u|^p$ is treated in \cite{Chen2,Chen2021}, where nonexistence of global solutions is proved under suitable assumptions on $p$. Related finite-time blow-up results for the Cauchy problem associated with the Kuznetsov equation in $\mathbb{R}^n$ were obtained in \cite{DeK}. On the other hand, global existence for suitably small initial data was established in bounded domains in \cite{Kaltenbacher2012} and for the Cauchy problem in the whole space in \cite{Racke2021, SaidHouari2022}. The asymptotic behavior as $\tau\searrow0$ was investigated in \cite{Bon1, Bon2, Kaltenbacher2019} and further singular limits have been studied in \cite{KaltenbacherNikolic2021}. We also refer to \cite{Lizama2019}, where local null controllability results have been obtained.
		More recently, a model coupling thermal dynamics with Moore--Gibson--Thompson-type equations involving third-order in time derivatives in a one-dimensional viscoelastic medium of standard linear solid type was introduced in \cite{Cla}. Its mathematical analysis was carried out in \cite{Black, Cla, Frik}, including local existence and uniqueness of strong solutions, local existence of classical solutions, as well as global existence and exponential stabilization for small initial data. 	See also \cite{Benabbas2026}, where  local existence of solutions is proved for suitably small initial data.

		\subsection{Main results} 
		
		 Evolution equations of fourth order in time, such as  (\ref{0}), appear to be considerably more difficult to analyze than their lower-order counterparts, even at the linear level, due to the delicate stability issues associated with their temporal structure. The presence of nonlinear terms introduces further difficulties, which may account for the apparent lack of results concerning finite-time blow-up in this setting.
	This paper attempts to bridge this gap by analyzing finite-time blow-up for strong solutions to the fourth-order Jordan-Moore-Gibson-Thompson problem~\eqref{0}. 
		Our aim here is not only to establish finite-time blow-up in the $L^\infty$-norm, {but also to rule out the occurrence of gradient blow-up.
			Recall that \textit{gradient blow-up} refers to the situation in which the solution remains bounded in $L^\infty(\Omega)$ whereas its spatial gradient blows up as $t\nearrow \tm$, namely
			\be{Grad}
			\tm<\infty, \qquad
			\sup_{0\leq t< \tm}\|u(\cdot,t)\|_{L^\infty(\Omega)}<\infty
			\quad\,\, \mbox{ and }\quad\,\, 
			\lim_{t\nearrow \tm}
			\|\nabla u(\cdot,t)\|_{L^\infty(\Omega)}=\infty,
			\ee
			see for instance \cite{Fila1994, Souplet2002}. It is worth mentioning that
			the occurrence of gradient blow-up phenomena is not only of mathematical interest but also of physical relevance. For example, in  therapeutic ultrasound  the formation of steep gradients can be exploited in applications such as shock-wave  lithotripsy \cite{Cleveland2012}. Investigation of blow-up phenomena in nonlinear acoustic models is therefore important not only for understanding the mathematical behavior thereof, but also for clarifying the possible physical implications. In the third-order JMGT model (\ref{ITE9}) gradient finite-time blow-up does not occur, as shown recently 
			in \cite{Nik3}. Our arguments here are inspired by those in \cite{Nik3}, but the techniques developed there cannot be applied directly to the present setting and the presence of the fourth-order in time derivative in (\ref{0}) makes the analysis very delicate.  
			The passage from the third-order problem studied in~\cite{Nik3} to the present fourth-order equation is not a formal shift of the energy method. Indeed, the additional temporal derivative changes the nonlinear structure through the appearance of 
			\[ (f(u))_{ttt} = f'(u)u_{ttt} +3f''(u)u_tu_{tt} +f'''(u)u_t^3, \]
which involves new nonlinear interactions among lower-order time derivatives.
These features require suitable nontrivial modifications of the functionals and differential inequalities employed in \cite{Nik3}, which allow us to establish the local existence and the refined extensibility criterion as well as to derive the finite-time blow-up. Notably, our results are valid for arbitrary physically meaningful choices of the parameters $\alpha,\beta,\gamma,\delta,\varrho$, without requiring any relation of the form \eqref{STT}, and for a broad class of nonlinearities $f$. Our first main result reads as follows.
			\begin{theo}\label{Th1}
				Suppose that $\al \in \R$ and $ \bet,\gamma,\del,\varrho>0$, as well as
				\be{f}
				f\in C^{4}(\R) \quad\,\, \mbox{with}\quad\,\,  f(0)=0.
				\ee 
				Then, for any initial data satisfying
				\be{IN0}
				u_{0},u_{1}\in W^{4,2}(\Om)\cap W_{0}^{1,2}(\Om),\quad\,\, u_{2}, u_{3}\in W^{2,2}(\Om)\cap W_{0}^{1,2}(\Om),
				\ee 
				there exists $\tm \in (0, \infty]$ 
				and a uniquely determined function $u$ with regularity 
				\be{T1}
				\lbal
				u\in C^{0}([0,\tm);W_{0}^{1,2}(\Om)\cap C^{0}(\bom))\cap L_{loc}^{\infty}([0,\tm); W^{2,2}(\Om)), \\
				u_{t}\in C^{0}([0,\tm);W_{0}^{1,2}(\Om)\cap C^{0}(\bom))\cap L_{loc}^{\infty}([0,\tm); W^{2,2}(\Om)), \\
				u_{tt}\in C^{0}([0,\tm);W_{0}^{1,2}(\Om)\cap C^{0}(\bom))\cap L_{loc}^{\infty}([0,\tm); W^{2,2}(\Om)), \\
				u_{ttt}\in C^{0}([0,\tm);L^2(\Om))\cap L_{loc}^{\infty}([0,\tm); W_{0}^{1,2}(\Om)), \\
				u_{tttt}\in L_{loc}^{\infty}([0,\tm); L^2(\Om)),
				\ear 
				\ee
				such that $u$ is a strong solution of \eqref{0} in the sense of Definition \ref{Deff} below. Moreover, the following blow-up criterion holds: 
				\be{BU}
				\tm<\infty\quad\,\,\Rightarrow\quad\,\, \limsup_{t\nearrow \tm} \|u(\cd,t)\|_{L^{\infty}(\Om)}=\infty.
				\ee 
			\end{theo}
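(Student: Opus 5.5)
The plan is to follow the strategy of \cite{Nik3} for the third-order JMGT equation, adapted to the extra time derivative. The first step is to \emph{time-integrate} the equation to remove the highest derivatives of $f(u)$. Integrating \eqref{0} once in time gives
\[
u_{ttt}+\al u_{tt}+\bet u_t-\gamma\Del u_t-\del\Del u-\varrho\Del U=(f(u))_{tt}+(f(u))_t+C_0(x),
\]
where $U=\int_0^t u$ and $C_0$ is determined by the data. Integrating twice and three times gives analogous formulations, in which only $(f(u))_t$, $f(u)$ and their integrals appear on the right. These are the ``three time-integrated formulations''. Writing the first-order term as $(f(u))_{tt}=f'(u)u_{tt}+f''(u)u_t^2$ and dividing by $1-f'(u)$ is not possible in general, so the argument cannot rely on a normal form. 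Instead I plan to treat $(f(u))_{ttt}$ as $\pa_t$ of something and absorb it into the energy by testing against resolvent-type multipliers $(-\Del)^{-1}$, i.e.\ working one level of regularity lower.

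For local existence, I will use a Galerkin scheme in the Dirichlet eigenbasis. On approximate solutions I will derive a priori estimates for
\[
E(t)=\|\Del u\|^2+\|\Del u_t\|^2+\|\Del u_{tt}\|^2+\|\na u_{ttt}\|^2+\|u_{tttt}\|^2
\]
on a short time interval. These come from testing the differentiated equation with $u_{tttt}$ and $-\Del u_{ttt}$ and using $W^{2,2}\hra L^\infty$ (here $n\le3$) to control $f^{(k)}(u)$, $k\le4$. The regularity assumptions \eqref{IN0} make $u_{tttt}(0)\in L^2$ available from the equation. A contraction or compactness argument then produces a solution in the class \eqref{T1}, and uniqueness follows from an energy estimate on the difference of two solutions at the $L^2$-level of $u_{ttt}$ via the integrated formulation. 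A standard continuation argument gives the maximal time $\tm$, with the coarse criterion that $\tm<\infty$ forces $E$ to blow up.

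The heart of the proof is the refined criterion \eqref{BU}. Assume $\tm<\infty$ and $\|u\|_{L^\infty}\le M$ on $(0,\tm)$. Then $f^{(k)}(u)$ is bounded for $k\le4$, and I will bootstrap through a hierarchy of estimates.

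\emph{Level 0.} I test the thrice-integrated formulation, which has right-hand side $f(u)+\int_0^t f(u)$ plus data terms, with $u_t$ and with $(1-\eps\Del)^{-1}$-regularized multipliers; this is the resolvent regularization. This bounds $u,u_t$ in $W^{1,2}$ and $u_{tt}$ in $L^2$ using only $\|f(u)\|_{L^\infty}\le C(M)$, because all right-hand sides are bounded functions and Gronwall applies on the finite interval.

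\emph{Level 1.} I use the twice-integrated formulation, where $(f(u))_t=f'(u)u_t$ is linear in $u_t$ with a bounded coefficient. Testing with $u_{tt}$ and $-\Del u_t$ gains one derivative.

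\emph{Level 2.} I use the once-integrated formulation, where $(f(u))_{tt}=f'(u)u_{tt}+f''(u)u_t^2$. The quadratic term $u_t^2$ is controlled in $L^2$ through $\|u_t\|_{L^4}^2\le C\|\na u_t\|^2$, which is already bounded at Level 1.

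\emph{Level 3.} Finally I use the original equation, handling $(f(u))_{ttt}=f'(u)u_{ttt}+3f''(u)u_tu_{tt}+f'''(u)u_t^3$. Here $u_t\in W^{2,2}\subset L^\infty$ comes from the previous level once elliptic regularity is applied to $\Del u_t$. Each step is a linear Gronwall inequality with coefficients depending on $M$, so $E(t)$ stays bounded up to $\tm$, which contradicts the coarse criterion.

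I expect the main obstacle to be the top-order term $f'(u)u_{ttt}$ and its partner $f'(u)u_{tt}$. Tested against $u_{tttt}$ or $u_{ttt}$, these produce terms such as $\io f'(u)u_{ttt}u_{tttt}=\frac12\frac{d}{dt}\io f'(u)u_{ttt}^2-\frac12\io f''(u)u_tu_{ttt}^2$. This requires $u_t\in L^\infty$ before the top estimate can close, which is exactly why the hierarchy must be ordered as above. Equally delicate is the fact that the energy is not coercive unless one tests with a suitable combination of $u_{ttt}+\al u_{tt}$-type multipliers. I would first try the multiplier $u_{ttt}+\lambda u_{tt}$ with $\lambda$ large, together with resolvent regularization, so as to generate the dissipative term $\gamma\|\na u_{tt}\|^2$ without imposing \eqref{STT}. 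Finite-time Gronwall then tolerates indefinite lower-order terms for arbitrary $\al\in\R$.
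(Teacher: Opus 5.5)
Your local existence and uniqueness plan is workable. The paper makes the Galerkin energy coercive simply by adding $\mu\|\Del u_t\|^2+\lambda\|\Del u\|^2$ with $\mu=\frac{8\del^2}{3\gamma}$, $\lambda=\frac{8\varrho^2}{3\gamma}$, so neither a $u_{ttt}+\lambda u_{tt}$ multiplier nor any dissipation is needed on finite intervals. The gap is in the refined criterion \eqref{BU}, which is the real content of the theorem, and it sits at your very first step. Your Level 0 asserts that the thrice-integrated equation bounds $u,u_t$ in $W^{1,2}$ and $u_{tt}$ in $L^2$ using only $\|f(u)\|_{L^\infty}\le C(M)$. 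This is false. With a forcing known only to be bounded, that equation gives nothing beyond its natural energy level: $u\in L^2$ and $v,w,z\in W^{1,2}_0$, so $u_t$ is merely in $W^{-1,2}$. The multiplier $u_t$ is not coercive either, since $\gamma\io\na v\cd\na u_t=\gamma\frac{d}{dt}\io\na v\cd\na u-\gamma\io|\na u|^2$. Any gain of spatial regularity forces you to differentiate $f(u)$ in space, and then bounded coefficients no longer suffice: the obstruction is the quadratic gradient term in $\Del f(u)=f'(u)\Del u+f''(u)|\na u|^2$. The idea you are missing is the Gagliardo--Nirenberg inequality $\|\na\vp\|_{L^4(\Om)}^2\le c\|\Del\vp\|_{L^2(\Om)}\|\vp\|_{L^\infty(\Om)}$. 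Under $\|u\|_{L^\infty}\le M$ it makes $\|\Del f(u)\|_{L^2}$ and $\|\Del F(u)\|_{L^2}$ linear in $\|\Del u\|_{L^2}$ and its time integral. This is how Lemma~\ref{lem1.9} bounds $\|\Del u\|_{L^2}$ first and on its own, by testing \eqref{E15.07} at the level of the functional $\frac12\|\Del u\|^2+\frac{\gamma}{2}\|\na\Del v\|^2+\dots$. Since $\na\Del v$ is not known to lie in $L^2$, this is exactly where $(1+\eps A)^{-1}$ is needed: it justifies a multiplier one level \emph{above} the available regularity, not one level below as your $(-\Del)^{-1}$ remark suggests.

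The ordering of the hierarchy is essential. Suppose $\Del u$ is not yet controlled and you try, as in your Level 1, to gain $\na u_t$ and $\Del u$ together from the twice-integrated equation. Then the term $\io f''(u)u_t\na u\cd\na u_t$ is, for $n=3$, only bounded by something like $C\|\na u_t\|_{L^2}^{7/4}\|\Del u\|_{L^2}^{1/2}$. That is superlinear in the energy, so the linear Gronwall argument you rely on does not close. Once $\sup_t\|\Del u\|_{L^2}<\infty$ is known, every term of $\Del(f'(u)u_t)$ is linear in $\|\Del u_t\|_{L^2}$, using $\|u_t\|_{L^\infty}\le c\|\Del u_t\|_{L^2}$ and $\|\na u\|_{L^4}\|\na u_t\|_{L^4}\le c\|\Del u\|_{L^2}\|\Del u_t\|_{L^2}$. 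The paper then proceeds in order. First, $\|\Del u_t\|$ comes from \eqref{E15.6} (Lemma~\ref{lem1.10}). Next, $\|\na u_{tt}\|$ comes from \eqref{E15.5} (Lemma~\ref{lem11}). Finally, $\|\Del u_{tt}\|$ and $\|\na u_{ttt}\|$ come from the original equation (Lemma~\ref{lem2.11}), where the top-order term $\io f''(u)u_{ttt}\na u\cd\na u_{ttt}$ again needs $\na u\in L^4$, i.e.\ the $\Del u$ bound. These steps also require $\Del\psi_1,\Del\psi_2,\Del\psi_3\in L^2(\Om)$. That is the actual reason for assuming \eqref{IN0} instead of \eqref{IN}, not the availability of $u_{tttt}(\cd,0)\in L^2$, which \eqref{IN} already provides.
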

			To formulate our second main result, let $(e_{i})_{i\geq 1}\subset C^{\infty}(\bom)$ be the eigenfunctions of
			the Laplace-Dirichlet operator $-\Del$
			$$
			\lball
			-\Del e_{i}=\lam_{i}e_{i}, \\[0.5mm]
			{e_{i}}_{|\pa\Om}=0,
			\ear
			$$
			normalized so that $\io e_{i} e_{j}=\del_{ij}$ for all $i,j\geq 1$, and such that
			$$
			e_{1}>0 \quad\, \mbox{ in }\, \Om.
			$$
			\begin{theo}\label{Th1.3}
				Assume that $f$ satisfies (\ref{f}) and the following conditions: 
				\be{T1.3.1}
				\,f''(\xi)\geq 0 \qquad \mbox{for all }\; \xi\in \R,
				\ee 
				\be{T1.3.2}
				\frac{f(\xi)}{\xi}\rightarrow +\infty \qquad \mbox{as }\; \xi \to +\infty
				\ee
				and
				\be{T1.3.3}
				\int_{\xi_{0}}^{\infty}\frac{d\xi}{f(\xi)}<\infty \qquad \mbox{for some }\; \xi_{0}>0.
				\ee 
				Then, for every $T_{0}>0$ there exists $K_{0}:=K_{0}(T_{0})$ such that if $u_{0}\in W^{4,2}(\Om)\cap W_{0}^{1,2}(\Om)$ fulfils 
				\be{T1.3.4}
				\io u_{0}e_{1}>K_{0}
				\ee 
				then one can find $K_{1}:=K_{1}(u_{0})>0$ with the property that for every $u_{1}\in W^{4,2}(\Om)\cap W_{0}^{1,2}(\Om)$ satisfying
				\be{T1.3.5}
				\io u_{1}e_{1}>K_{1}
				\ee 
				there exists  $K_{2}:=K_{2}(u_{1})>0$ such that whenever $u_{2}\in W^{2,2}(\Om)\cap W_{0}^{1,2}(\Om)$  obeys
				\be{T1.3.6}
				\io u_{2}e_{1}>K_{2}
				\ee 
				there exists $K_{3}:=K_{3}(u_{2})>0$ such that for any $u_{3}\in W^{2,2}(\Om)\cap W_{0}^{1,2}(\Om)$ with  
				\be{T1.3.7}
				\io u_{3}e_{1}>K_{3}
				\ee 
				the corresponding strong solution $u$ of (\ref{0}) blows up no later than time $T_{0}$. More precisely, in the sense of Theorem \ref{Th1}, its maximal existence time satisfies $\tm\leq T_{0}$ and 
				\be{T1.3.55}
				\limsup_{t\nearrow \tm} \|u(\cd,t)\|_{L^{\infty}(\Om)}=\infty.
				\ee 
			\end{theo}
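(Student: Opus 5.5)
The plan is to use Kaplan's first-eigenvalue method, combined with the extensibility criterion \eqref{BU} of Theorem \ref{Th1}. Suppose, for contradiction, that $\tm>T_0$. Then the strong solution exists on $[0,T_0]$, and we set
$$y(t):=\io u(\cd,t)e_1,\qquad t\in[0,T_0].$$
We test \eqref{0} against $e_1$ and integrate by parts; this is legitimate because $e_1$ vanishes on $\pO$ and the solution has the regularity \eqref{T1}. This yields
$$y''''+\al y'''+(\bet+\gamma\lam_1)y''+\del\lam_1 y'+\varrho\lam_1 y=\Big(\io f(u)e_1\Big)'''+\Big(\io f(u)e_1\Big)''.$$
Normalise $e_1$ so that $\io e_1=1$; this is harmless after rescaling the constants $K_i$. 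By convexity \eqref{T1.3.1} and Jensen's inequality with respect to the probability measure $e_1\,dx$, the quantity $z(t):=\io f(u)e_1$ satisfies $z\geq f(y)$.

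The third and fourth derivatives of $z$ cannot be compared with those of $f(y)$. We therefore integrate the identity three times in time, starting from $0$, which removes every derivative of $z$ except an undifferentiated one. After the three integrations we obtain
$$y'+\al y+(\bet+\gamma\lam_1)\int_0^t y+\dots = z(t)+\int_0^t z+ P(t),$$
where $P$ is a polynomial of degree at most $3$ in $t$. Its coefficients depend linearly on $\io u_ie_1$ and on the initial values $z(0),z'(0),z''(0)$. These initial values are computable from the data, since $u_0,u_1,u_2,u_3$ are continuous.

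Because $\int_0^t z\geq \int_0^t f(y)$, this gives a first-order integro-differential inequality
$$y'(t)\geq f(y(t)) - C\Big(|y(t)|+\int_0^t|y|+\iint|y|+\iiint|y|\Big)+P(t).$$
We will actually carry this out through the chain of three time-integrated formulations in turn. Each level passes information down from the data $u_3,u_2,u_1$, in that order, to give lower bounds on $y''',y'',y'$. The nested choice of constants in the statement reflects exactly this. First $K_0$ is chosen depending on $T_0$. Given $u_0$, the constant $K_1$ is then chosen so that $y'$ stays large. The constants $K_2$ and $K_3$ dominate the terms involving $z(0)$, $z'(0)$ and $z''(0)$, which depend on earlier data through $f'(u_0)u_1$ and similar expressions. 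The outcome is that $P(t)\geq 0$ on $[0,T_0]$, and in fact $P$ can be made as large as required.

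Next we use \eqref{T1.3.2}. For $\xi$ large, $f(\xi)\geq 2C\xi$, so the linear terms and the integrated terms are absorbed as long as $y$ stays large and increasing. A comparison argument then shows that $y$ is increasing and satisfies $y'\geq \tfrac12 f(y)$ on $[0,T_0]$. Separating variables and applying \eqref{T1.3.3} gives
$$T_0\le 2\int_{y(0)}^\infty\frac{d\xi}{f(\xi)}.$$
Choosing $K_0$ so large that the right-hand side is smaller than $T_0$ gives a contradiction. Hence $\tm\le T_0$, and \eqref{BU} then yields \eqref{T1.3.55}.

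The main obstacle is the bookkeeping of the hierarchical thresholds. The integrated identities contain the terms $\io f(u)e_1$ at times $t$ and $0$, together with their derivatives at $0$. These derivatives involve the data nonlinearly; for example $z'(0)=\io f'(u_0)u_1e_1$, which may be negative and large in terms of $u_1$. This is why $K_2$ must depend on $u_1$ and $K_3$ on $u_2$. A second point is that the possibly negative sign of $\al$ and the lower-order memory-type integrals must not destroy monotonicity. To handle this I would first prove, by a continuity argument, that $y$, $y'$ and $y''$ remain above suitable positive thresholds on $[0,T_0]$, and only afterwards extract the Riccati-type inequality.
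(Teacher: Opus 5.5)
Your proposal is correct and is essentially the paper's argument. The paper likewise tests the thrice-integrated identity \eqref{E15.07} against $e_1$ and picks $K_1,K_2,K_3$ in turn so that the three integration constants $\int_\Omega\psi_3e_1$, $\int_\Omega\psi_2e_1$, $\int_\Omega\psi_1e_1$ are positive, which gives $P\ge 0$. It then uses Jensen and $f(\xi)\geq f'(0)\xi$, absorbs the $\alpha$-term and the memory terms via superlinearity in a continuity argument on $\{y>K_0/\kappa,\ y'>0\}$ (using $\int_0^t y\le t\,y(t)$ and its iterates), and concludes from $y'\geq\frac12 f(y)$ and \eqref{T1.3.3}, with \eqref{BU} giving \eqref{T1.3.55}.

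The one thing to discard is your plan to also keep $y''$ above a positive threshold, and to get lower bounds on $y'''$. This is unnecessary, and it is not readily available anyway, because the lower-level integrated identities contain $\frac{d}{dt}\int_\Omega f(u)e_1=\int_\Omega f'(u)u_te_1$, which has no sign.
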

			
			\subsection{Proof strategy} By means of integrated versions of \eqref{0} (see Lemma~\ref{lem2.3}), we first establish uniqueness for the strong solution introduced in Definition \ref{Deff} under the assumption that $f$ satisfies \eqref{f}
			(see Lemma~\ref{lem2.4}). Next, starting from the approximate solutions constructed in Lemma \ref{lem1.1} by means of a Galerkin approximation, we show that (upon dropping the approximation index $N$)
			$$
			\F(t)\leq c_{6}+c_{18}t
			\qquad\mbox{for all }t\in(0,T_N),
			$$
			where $T_{N}$ is defined in (\ref{TNNN}), $c_{6}$ and $c_{18}$ are defined in \eqref{E13.10} and \eqref{E13.14}, respectively, while
			\bee{} 
			\spl \F(t):=&\, \frac{1}{2}\io |\na u_{ttt}(\cd,t)|^2+\frac{\gamma}{2}\io |\Del u_{tt}(\cd,t)|^2+\del \io \Del u_{t}(\cd,t)\Del u_{tt}(\cd,t) +\varrho \io \Del u(\cd,t)\Del u_{tt}(\cd,t)\\[1.3mm] &+\frac{8 \del^2}{3\gamma}\io |\Del u_{t}(\cd,t)|^2+\frac{8 \varrho^2}{3\gamma} \io |\Del u(\cd,t)|^2.
		\end{split} 
		\eee
		A self-map argument together with the energy estimates and a compactness argument then yields the existence of a local strong solution in the sense of Definition \ref{Deff}, provided that the initial data satisfy the regularity assumptions in \eqref{IN}. The solution can subsequently be extended as long as
		\bee{} 
		\tm<\infty\, \Rightarrow\, \limsup_{t\nearrow \tm}\left[\|\na u_{ttt}(\cd,t)\|_{L^2(\Om)}+\|\Del u_{tt}(\cd,t)\|_{L^2(\Om)}+\|\Del u_{t}(\cd,t)\|_{L^2(\Om)}+\|\Del u(\cd,t)\|_{L^2(\Om)}\right]=\infty. \eee
		The crucial step in the proof of Theorem \ref{Th1} is to sharpen this extensibility criterion to
		$$
		\tm<\infty
		\quad\,\,\Rightarrow\quad\,\,
		\limsup_{t\nearrow \tm}\|u(\cdot,t)\|_{L^\infty(\Omega)}=\infty,
		$$
		thereby ruling out gradient blow-up. This is achieved by means of the resolvent-type regularization introduced in Lemma \ref{lem1.8}, together with suitable higher-order energy functionals; see Lemmas~\ref{lem1.9}-\ref{lem1.10} and \ref{lem2.11}. The use of these functionals requires additional regularity on the initial data, expressed by the assumptions in \eqref{IN0}.	
		Finally, assuming in addition to \eqref{IN0} that the initial data satisfy \eqref{T1.3.4}--\eqref{T1.3.7} and that $f$ is convex and superlinear in the sense of \eqref{T1.3.1}--\eqref{T1.3.3}, we prove Theorem~\ref{Th1.3} by combining Kaplan's first-eigenvalue method with a self-map argument.
		
		\subsection*{A word of warning}
		Unless explicitly stated otherwise, we always assume that $\Om\subset \R^n$ is a smooth bounded domain
		with $n\leq 3$, that $\al \in \R$ and that $\bet,\gamma,\del,\varrho>0$. Moreover, we assume that (\ref{f}) is satisfied.
		\section{Preliminaries and uniqueness of strong solutions} 
		The objective of this section is twofold. First, we state some preliminary results that will be invoked later on. Second, we establish the uniqueness of strong solutions whenever such solutions exist. The analysis presented here relies exclusively on the following regularity assumptions
		\be{IN}
		u_{0}, u_{1}, u_2\in W^{2,2}(\Om)\cap W_{0}^{1,2}(\Om) \qquad\text{and}\qquad u_{3}\in W_{0}^{1,2}(\Om).
		\ee 
		Compared with the assumptions on $u_{0}$, $u_{1}$ and $u_{3}$ from (\ref{IN0}), these conditions are weaker. 
		
		\medskip	
		We start by introducing the notion of strong solvability that will be employed throughout this paper.   
		\begin{defi}\label{Deff}
			Suppose  $\al \in \R$, $\bet,\gamma,\del,\varrho>0$ and $f\in C^3(\R) $, as well as $T\in (0,\infty]$. A function $u$ satisfying the regularity 
			\be{Def1}
			\lbal
			u\in C^{0}([0,T);W_{0}^{1,2}(\Om)\cap C^{0}(\bom))\cap L_{loc}^{\infty}([0,T); W^{2,2}(\Om)), \\
			u_{t}\in C^{0}([0,T);W_{0}^{1,2}(\Om)\cap C^{0}(\bom))\cap L_{loc}^{\infty}([0,T); W^{2,2}(\Om)), \\
			u_{tt}\in C^{0}([0,T);W_{0}^{1,2}(\Om)\cap C^{0}(\bom))\cap L_{loc}^{\infty}([0,T); W^{2,2}(\Om)), \\
			u_{ttt}\in C^{0}([0,T);L^2(\Om))\cap L_{loc}^{\infty}([0,T); W_{0}^{1,2}(\Om)), \\
			u_{tttt}\in L_{loc}^{\infty}([0,T); L^2(\Om)),
			\ear 
			\ee 
			is called a strong solution of (\ref{0}) in $\Om \times (0,T)$ if
			\bee
			u(\cd,t)=u_{0}, \qquad u_{t}(\cd,0)=u_{1}, \qquad u_{tt}(\cd,0)=u_{2}, \qquad u_{ttt}(\cd,0)=u_{3},
			\eee 
			and the following identity holds a.e.\ in $\Om\times (0,T)$
			\be{Def1E}
			u_{tttt}+\al u_{ttt}+\bet u_{tt}-\gamma \Del u_{tt}-\del \Del u_{t}-\varrho \Del u=f'(u)u_{ttt}+3f''(u)u_{t}u_{tt}+f'''(u)u^3_{t}+f'(u)u_{tt}+f''(u)u^2_{t}.\,\,\,
			\ee 
		\end{defi}

		\medskip
		The following lemma will be employed  in Lemmas \ref{lem1.9}-\ref{lem1.10}, and \ref{lem2.11} to overcome the lack of regularity of strong solutions in order to sharpen the extensibility criterion appearing in Lemma \ref{lem2.5} to that in (\ref{BU}). It collects several properties of a resolvent-type regularization of functions from $L^2(\Om)$. Its proof can be found in \cite[Lemma 3.2]{Nik3}.
		\begin{lem}\label{lem1.8}
			Let $A$ be the realization of $-\Del $ in $L^2(\Om)$ with domain  $D(A)=W^{2,2}(\Om)\cap W_{0}^{1,2}(\Omega)$. Then, for every $\eps >0$, the resolvent operator $(1+\eps A)^{-1}$ satisfies  
			\be{P1}
			\Del (1+\eps A)^{-1}\vp = (1+\eps A)^{-1}\Del \vp \quad \mbox{ for all }\; \vp \in D(A)
			\ee 
			and 
			\be{P2}
			\io \left|(1+\eps A)^{-1}\vp \right|^2\leq \io \vp^2\quad \mbox{ for all }\; \vp \in L^2(\Om).
			\ee 
			In addition 
			\be{P3}
			\io \left|\na (1+\eps A)^{-1}\vp \right|^2\leq \io |\na \vp|^2\quad \mbox{ for all }\; \vp \in W_{0}^{1,2}(\Om).
			\ee 
			Finally, for every fixed $\vp \in L^2(\Om)$, we have 
			\be{P4}
			(1+\eps A)^{-1}\vp \rightarrow \vp \quad \mbox{in} \; L^2(\Om)\quad \mbox{ as }\; \eps\searrow 0.
			\ee 
		\end{lem}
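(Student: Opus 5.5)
Since $A$ is the Dirichlet Laplacian on a bounded domain, it is self-adjoint, nonnegative and has compact resolvent. I will work with the orthonormal eigenbasis $(e_i)_{i\geq1}$ of $L^2(\Om)$ introduced above, with $Ae_i=\lam_i e_i$ and $\lam_i>0$. For $\vp\in L^2(\Om)$ write $\vp=\sum_i \hat\vp_i e_i$ with $\hat\vp_i=\io \vp e_i$. Then
\[
(1+\eps A)^{-1}\vp=\sum_i \frac{\hat\vp_i}{1+\eps\lam_i}e_i .
\]
This series converges in $L^2(\Om)$, and its sum lies in $D(A)$ because $\lam_i/(1+\eps\lam_i)\leq 1/\eps$ is bounded. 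All four properties then follow from the multiplier $m_\eps(\lam):=(1+\eps\lam)^{-1}\in(0,1]$.

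For \eqref{P1}, let $\vp\in D(A)$. Then $\Del\vp=-A\vp$ and $(1+\eps A)^{-1}\vp\in D(A)$. The operators $A$ and $(1+\eps A)^{-1}$ commute on $D(A)$, since both act diagonally on the $e_i$. Abstractly: if $w=(1+\eps A)^{-1}\vp$, then $w+\eps Aw=\vp$. Applying $A$, which is legitimate because $Aw=(\vp-w)/\eps\in D(A)$, gives $(1+\eps A)Aw=A\vp$, that is, $Aw=(1+\eps A)^{-1}A\vp$. Multiplying by $-1$ gives \eqref{P1}.

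For \eqref{P2}, Parseval gives
\[
\io |(1+\eps A)^{-1}\vp|^2=\sum_i m_\eps(\lam_i)^2\hat\vp_i^2\leq\sum_i\hat\vp_i^2=\io\vp^2 .
\]
For \eqref{P3}, let $\vp\in W_0^{1,2}(\Om)=D(A^{1/2})$. Then $\io|\na\vp|^2=\sum_i\lam_i\hat\vp_i^2$, which follows by integrating by parts against each $e_i$ and using density of finite sums in $W^{1,2}_0$. The function $w=(1+\eps A)^{-1}\vp$ lies in $D(A)\subset W_0^{1,2}$, so
\[
\io|\na w|^2=\sum_i\lam_i m_\eps(\lam_i)^2\hat\vp_i^2\leq\io|\na\vp|^2 .
\]
Without the spectral identification one can argue directly: test $w-\eps\Del w=\vp$ with $-\Del w$, which gives $\io|\na w|^2+\eps\io|\Del w|^2=\io\na\vp\cd\na w$. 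Young's inequality on the right-hand side then yields $\io|\na w|^2\leq\io|\na\vp|^2$.

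For \eqref{P4}, we have
\[
\|(1+\eps A)^{-1}\vp-\vp\|_{L^2}^2=\sum_i\Big(\frac{\eps\lam_i}{1+\eps\lam_i}\Big)^2\hat\vp_i^2 .
\]
Each term tends to $0$ as $\eps\searrow0$ and is dominated by $\hat\vp_i^2$, which is summable, so dominated convergence for series gives the claim. The only point requiring some care is this last step together with the identification $\io|\na\vp|^2=\sum\lam_i\hat\vp_i^2$ for $\vp\in W^{1,2}_0$. The alternative is a density argument: first prove the convergence for $\vp\in D(A)$, where $\|w-\vp\|_{L^2}=\eps\|Aw\|_{L^2}\leq\eps\|A\vp\|_{L^2}$, and then extend to all of $L^2$ using the uniform bound \eqref{P2}.
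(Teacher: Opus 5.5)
Your proof is correct. The paper gives no argument of its own here and simply cites \cite[Lemma 3.2]{Nik3}, so your proof stands on its own rather than reproducing one in the text.

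The standard elementary route for such a lemma runs as follows:
\begin{itemize}
\item set $w:=(1+\eps A)^{-1}\vp$ and test the resolvent identity $w-\eps\Del w=\vp$ with $w$ and with $-\Del w$, which gives \eqref{P2} and \eqref{P3} via Cauchy--Schwarz;
\item obtain \eqref{P1} from the abstract commutation you write down;
\item obtain \eqref{P4} from the density argument you give at the end.
\end{itemize}
This uses only integration by parts and would work equally well without a discrete spectrum.

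Your main route instead diagonalizes $A$ in the eigenbasis $(e_i)$ that the paper has already fixed. It then reduces all four claims to the scalar facts $0<m_\eps(\lam)\le 1$ and $m_\eps(\lam)\to 1$. Its only delicate ingredient is the identity $\io|\na\vp|^2=\sum_i\lam_i\hat\vp_i^2$ on $W^{1,2}_0(\Om)$, i.e.\ density of $\mathrm{span}\{e_i\}$ in $W^{1,2}_0(\Om)$. This is true, since anything orthogonal to all $e_i$ in the Dirichlet inner product has $\lam_i\hat\vp_i=0$ for every $i$. You sidestep it anyway with the testing alternative.

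A genuine advantage of the multiplier viewpoint is that it also gives the smoothing bounds
\[
\|\Del(1+\eps A)^{-1}\vp\|_{L^2(\Om)}\le\frac1\eps\|\vp\|_{L^2(\Om)}
\quad\text{and}\quad
\|\na(1+\eps A)^{-1}\vp\|_{L^2(\Om)}\le\frac{1}{2\sqrt\eps}\|\vp\|_{L^2(\Om)}.
\]
The paper uses both without proof in Lemma \ref{lem2.11}. They follow at once from $\lam\, m_\eps(\lam)\le\frac1\eps$ and $\sqrt\lam\, m_\eps(\lam)\le\frac{1}{2\sqrt\eps}$.
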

		The three elementary integrated forms of (\ref{0}), stated in the next lemma, will play a crucial role not only in establishing uniqueness and sharpening the extensibility criterion (\ref{E14.1}) to the form in (\ref{BU}), but also in the proof of the finite-time blow-up result in Theorem \ref{Th1.3}. The appearance of $u_{3} $, $\Del u_{1}$ and $\Del u_{0}$ in (\ref{E15.5}) explains the need for the additional 
		assumptions in (\ref{IN0}) beyond those imposed in (\ref{IN}); Indeed, Lemmas \ref{lem1.9}-\ref{lem1.10} explicitly rely on $L^2$-bounds for the quantities $\Del \psi_{1}$, $\Del \psi_{2}$ and $\Del \psi_{3}$, defined in (\ref{E15.2}), (\ref{E15.3}) and (\ref{E15.4}), respectively. 
		\begin{lem}\label{lem2.3}
			Assume that (\ref{IN}) holds and let $u$ be  a strong solution of (\ref{0}) in $\Om \times (0,T)$ for some $T\in (0, \infty]$. Let also
			\be{E15.1}
			\ali
			v(x,t)&:= \int_{0}^{t}u(x,s)\,ds,\\ w(x,t)&:=\int_{0}^{t}\int_{0}^{s}u(x,\sigma)\,d\sigma ds,\\ 
			z(x,t)&:=\int_{0}^{t}\int_{0}^{s}\int_{0}^{\sigma}u(x,\xi)\,d\xi d\sigma ds,
			\eali 
			\ee 
			for all $(x,t)\in \Om \times (0,T)$, and set 
			\be{E15.01}
			F(u(x,t)):=\int_{0}^{t}f(u(x,s))\,ds, \qquad (x,t)\in \Om\times (0,T).
			\ee 
			Moreover, define 
			\begin{align}
				\psi_{1}(x):=&\,u_{3}(x)+\al u_{2}(x)+\bet u_{1}(x)-\gamma \Del u_{1}(x)-\del \Del u_{0}(x)\nn\\&-f''(u_{0}(x))u^2_{1}(x)-f'(u_{0}(x))u_{2}(x)-f'(u_{0}(x))u_{1}(x),\label{E15.2}\\
				\psi_{2}(x)&:=u_{2}(x)+\al u_{1}(x)+\bet u_{0}(x)-\gamma \Del u_{0}(x)-f'(u_{0}(x))u_{1}(x)-f(u_{0}(x)),\label{E15.3}\\
				\psi_{3}(x)&:=u_{1}(x)+\al u_{0}(x)-f(u_{0}(x)),\label{E15.4}
			\end{align}
			for all $x\in \Om$. Then the following identities hold a.e.\ in $\Omega \times (0,T)$: 
			\be{E15.5}
			u_{ttt}+\al u_{tt}+\bet u_{t}-\gamma \Del u_{t}-\del \Del u-\varrho \Del v= f''(u)u^2_{t}+f'(u)u_{tt}+f'(u)u_{t}+\psi_{1},
			\ee 
			\be{E15.6}
			u_{tt}+\al u_{t}+\bet u-\gamma \Del u-\del \Del v-\varrho \Del w= f'(u)u_{t}+f(u)+t\psi_{1}+\psi_{2},
			\ee 	
			and 
			\be{E15.07}
			u_{t}+\al u+\bet v-\gamma \Del v-\del \Del w-\varrho \Del z= f(u)+F(u)+\frac{t^2}{2}\psi_{1}+t\psi_{2}+\psi_{3}.
			\ee 
		\end{lem}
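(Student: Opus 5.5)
The plan is to integrate the pointwise identity \eqref{Def1E} in time three times in succession, each time over $(0,t)$ and as an identity in $L^2(\Om)$ (Bochner sense). The regularity \eqref{Def1} and the initial conditions then identify every boundary term produced at $s=0$. The functions $\psi_1,\psi_2,\psi_3$ in \eqref{E15.2}--\eqref{E15.4} are exactly these accumulated boundary terms. The only care needed is in justifying the fundamental theorem of calculus and the chain rule for the nonlinear terms at the limited regularity available.

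\textbf{Step 1: justifying the calculus.} From $u_{tttt}\in L^\infty_{loc}([0,T);L^2(\Om))$ and $u_{ttt}\in C^0([0,T);L^2(\Om))$, the map $t\mapsto u_{ttt}(t)$ is absolutely continuous into $L^2(\Om)$, and
\[
u_{ttt}(t)-u_3=\int_0^t u_{tttt}(s)\,ds .
\]
In the same way $u,u_t,u_{tt}$ lie in $C^1([0,T);C^0(\bom))$, each with derivative the next one. Since $f\in C^3(\R)$, the chain rule applies classically in $C^0(\bom)$ to $f(u)$, $f'(u)u_t$ and $f''(u)u_t^2$. This gives
\[
(f(u))_t=f'(u)u_t,\qquad (f'(u)u_t)_t=f''(u)u_t^2+f'(u)u_{tt}.
\]
The last time derivative involves $u_{ttt}\in C^0([0,T);L^2)$, so we use the product rule in $W^{1,\infty}_{loc}([0,T);L^2(\Om))$ to get
\[
\big(f''(u)u_t^2+f'(u)u_{tt}\big)_t = f'(u)u_{ttt}+3f''(u)u_tu_{tt}+f'''(u)u_t^3 .
\]
Hence the right-hand side of \eqref{Def1E} equals $\partial_t\big(f''(u)u_t^2+f'(u)u_{tt}+f'(u)u_t\big)$. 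Finally, because $u,u_t,u_{tt}\in L^\infty_{loc}([0,T);W^{2,2}(\Om))$ and $\Del$ is a bounded operator from $W^{2,2}$ into $L^2$, it commutes with the time integrals. This yields
\[
\int_0^t\Del u=\Del v,\qquad \Del\int_0^t u_t=\Del u-\Del u_0,
\]
and similar identities for the other terms.

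\textbf{Step 2: the first integration.} Integrating \eqref{Def1E} over $(0,t)$ gives, on the left,
\[
u_{ttt}-u_3+\al(u_{tt}-u_2)+\bet(u_t-u_1)-\gamma\Del(u_t-u_1)-\del\Del(u-u_0)-\varrho\Del v .
\]
On the right it gives
\[
f''(u)u_t^2+f'(u)u_{tt}+f'(u)u_t-\big[f''(u_0)u_1^2+f'(u_0)u_2+f'(u_0)u_1\big].
\]
Moving all terms at $s=0$ to the right produces exactly $\psi_1$, which is \eqref{E15.5}.

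\textbf{Step 3: the second and third integrations.} We integrate \eqref{E15.5} again over $(0,t)$, using $\int_0^t \Del v=\Del w$ and $v(0)=0$. For the nonlinear terms we use
\[
\int_0^t\big(f''(u)u_t^2+f'(u)u_{tt}\big)=f'(u)u_t-f'(u_0)u_1,\qquad \int_0^t f'(u)u_t=f(u)-f(u_0).
\]
This gives \eqref{E15.6}, with boundary terms $u_2+\al u_1+\bet u_0-\gamma\Del u_0-f'(u_0)u_1-f(u_0)=\psi_2$. The constant $\psi_1$ integrates to $t\psi_1$. Integrating \eqref{E15.6} once more, and using $w(0)=v(0)=0$ together with $\int_0^t f(u)=F(u)$, we obtain \eqref{E15.07} with boundary term $u_1+\al u_0-f(u_0)=\psi_3$, while $t\psi_1+\psi_2$ integrates to $\tfrac{t^2}{2}\psi_1+t\psi_2$.

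The main obstacle is Step 1: one must verify that every term is an integrable $L^2$-valued function, so that the integrations and the almost-everywhere identity are legitimate. This is exactly where we use that, under \eqref{IN}, $\psi_1,\psi_2,\psi_3\in L^2(\Om)$; here $u_0,u_1\in W^{2,2}(\Om)$, and $u_0,u_1,u_2$ are bounded by the embedding $W^{2,2}\hra C^0(\bom)$, valid for $n\le 3$. Passing from identities in $L^2(\Om)$ for every $t$ to identities a.e. in $\Om\times(0,T)$ is then standard.
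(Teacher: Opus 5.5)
Your proposal is correct and follows the paper's argument, which simply integrates \eqref{Def1E} three times in time. Your bookkeeping of the boundary terms reproduces $\psi_1,\psi_2,\psi_3$ exactly, and your justification of the calculus is more careful than the paper's one-line proof. There is one harmless slip: \eqref{Def1} only gives $u_{tt}\in C^1([0,T);L^2(\Omega))$, not $C^1([0,T);C^0(\overline{\Omega}))$, since the latter would require $u_{ttt}$ to be continuous into $C^0(\overline{\Omega})$; you never use this claim, because you treat the last derivative in the $L^2$-valued setting anyway.
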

		\proof
		The identities follow immediately  by successively integrating  in  time (\ref{Def1E}).
		\qed
		We conclude this section by proving the uniqueness property asserted in Theorem \ref{Th1}. We shall make use of our overall assumption $n\leq 3$ for the first time. 
		\begin{lem}\label{lem2.4}
			Assume that (\ref{IN}) holds. Then for every $T\in (0,\infty]$ problem (\ref{0}) admits at most one strong solution in $\Om \times (0,T)$.
		\end{lem}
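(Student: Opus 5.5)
Suppose $u$ and $\bar u$ are two strong solutions of \eqref{0} in $\Om\times(0,T)$ with the same data. Fix any $T'\in(0,T)$. The plan is to show $u=\bar u$ on $(0,T')$ by a Gronwall argument for the difference $d:=u-\bar u$. The key idea is to work with the integrated identity \eqref{E15.6} from Lemma \ref{lem2.3} rather than with \eqref{Def1E}. In \eqref{E15.6} the nonlinearity appears only as $f'(u)u_t+f(u)$, i.e.\ as $(f(u))_t+f(u)$. This avoids third-order and second-order time derivatives of $u$ in the nonlinear terms, which we could not control at the regularity level of Definition \ref{Deff}.

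Since $\psi_1,\psi_2$ coincide for both solutions, subtracting the two versions of \eqref{E15.6} gives, with $v_d:=\int_0^t d$ and $w_d:=\int_0^t\int_0^s d$,
\[
d_{tt}+\al d_t+\bet d-\gamma\Del d-\del\Del v_d-\varrho\Del w_d=\big(f(u)-f(\bar u)\big)_t+f(u)-f(\bar u),
\]
with $d(0)=d_t(0)=0$. I will test this with $d_t$.

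First, the linear part yields $\frac{d}{dt}\big[\frac12\io d_t^2+\frac{\bet}{2}\io d^2+\frac{\gamma}{2}\io|\na d|^2\big]+\al\io d_t^2$. Second, the term $\del\io\na v_d\cdot\na d_t$ is rewritten as $\frac{d}{dt}\del\io\na v_d\cdot\na d-\del\io|\na d|^2$. Third, the term $\varrho\io\na w_d\cdot\na d_t$ is rewritten as $\frac{d}{dt}\varrho\io\na w_d\cdot\na d-\varrho\io\na v_d\cdot\na d$.

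To close, I will add the auxiliary quantities $\io|\na v_d|^2$ and $\io|\na w_d|^2$ to the energy. Their derivatives are bounded by $\io|\na d|^2+\io|\na v_d|^2+\io|\na w_d|^2$. With these added, the cross terms $\io\na v_d\cdot\na d$ and $\io\na w_d\cdot\na d$ are absorbed by Young's inequality into $\frac{\gamma}{4}\io|\na d|^2$ plus large multiples of the auxiliary quantities. This produces an energy $y(t)$ that is equivalent to $\io d_t^2+\io|\na d|^2+\io|\na v_d|^2+\io|\na w_d|^2$.

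For the right-hand side, write $(f(u)-f(\bar u))_t=f'(u)d_t+(f'(u)-f'(\bar u))\bar u_t$. By Definition \ref{Deff}, $u,\bar u,u_t,\bar u_t\in L^\infty(\Om\times(0,T'))$, since they are continuous into $C^0(\bom)$. Moreover $f\in C^3$, so on the bounded range of the solutions $f'$ and $f$ are Lipschitz with constant $L$. Hence
\[
\Big|\io\big(f(u)-f(\bar u)\big)_t d_t\Big|+\Big|\io\big(f(u)-f(\bar u)\big)d_t\Big|\le C\io d_t^2+C\io d^2,
\]
and $\io d^2\le C\io|\na d|^2$ by Poincaré. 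This gives $y'\le Cy$ with $y(0)=0$, so $y\equiv0$ on $(0,T')$ by Gronwall. Since $T'<T$ was arbitrary, $u=\bar u$.

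The main obstacle is justifying the energy identity at the available regularity. We need $d_{tt}\in L^2$ with $d_t\in L^\infty(W_0^{1,2})$ and $\Del d,\Del v_d,\Del w_d\in L^\infty(L^2)$, which the definition provides. Testing is then legitimate, and time integration by parts holds since $d_t\in W^{1,\infty}(L^2)$ and $d\in W^{1,\infty}(W^{1,2}_0)$ locally. The only point where $n\le3$ enters is the boundedness of $u_t,\bar u_t$; this follows from $W^{2,2}\hookrightarrow C^0(\bom)$, and the definition already guarantees it. If this justification caused difficulty, I would instead mollify in time or apply the resolvent $(1+\eps A)^{-1}$ from Lemma \ref{lem1.8} and pass to the limit via \eqref{P4}.
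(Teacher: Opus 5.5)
Your proof is correct. It follows the same energy structure as the paper but works one time-integration level higher.

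The paper subtracts the thrice-integrated identity \eqref{E15.07} and tests with $U=u-\whu$, using the energy
\[
\tfrac12\io U^2+\tfrac{\gamma}{2}\io|\na V|^2+\del\io\na V\cdot\na W+\varrho\io\na V\cdot\na Z+\mu\io|\na W|^2+\lambda\io|\na Z|^2 .
\]
In that formulation the nonlinearity appears only as $f(u)+F(u)$, so it needs nothing beyond local Lipschitz continuity of $f$ and the $L^\infty$ bound on $u,\whu$. The cost is that the memory term $F(u)-F(\whu)$ generates $\int_0^t\io U^2$, which forces a Volterra-type Gronwall step at the end.

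You instead subtract \eqref{E15.6} and test with $d_t$, with $(d,v_d,w_d)$ playing the roles of $(V,W,Z)$. This costs you two extra ingredients: local Lipschitz continuity of $f'$, and the bound $\bar u_t\in L^\infty(\Om\times(0,T'))$. Both are guaranteed by $f\in C^3$ and Definition \ref{Deff}. In return you get a purely local inequality $y'\le Cy$, you avoid the variable $z$ and the term $F$, and you obtain uniqueness in the stronger norm $\|d_t\|_{L^2}+\|\na d\|_{L^2}$.

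Your regularity justification already suffices as written. Definition \ref{Deff} gives $d_t\in C^1([0,T'];L^2(\Om))$ and $d,v_d,w_d\in C^1([0,T'];W_0^{1,2}(\Om))$ directly. So the fallback via mollification or $(1+\eps A)^{-1}$ is unnecessary; the paper does not regularize at this stage either.
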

		\proof
		Let $u$ and $\whu$ be two strong solutions of (\ref{0}) in $\Om \times (0,T)$, and fix $T_{0}\in (0,T)$. By the local Lipschitz continuity of $f$ and the boundedness of $u$ and $\whu$ in $\Om \times (0,T_{0})$, which follows from the continuous embedding $H^2(\Om)\hookrightarrow L^{\infty}(\Om)$, there exists $c_{1}=c_{1}(T_{0})>0$ such that
		\be{fL}
		|f(u)-f(\whu)|\leq c_{1}|u-\whu| \qquad \mbox{in }\, \Om\times (0,T_{0}).
		\ee
		For $(x,t)\in \bom \times [0,T_{0}]$, define 
		$$ U(x,t):=u(x,t)-\whu(x,t)\qquad \mbox{and}\qquad V(x,t):=\int_{0}^{t}u(x,s)ds-\int_{0}^{t}\whu(x,s)ds,$$ 
		$$W(x,t):=\int_{0}^{t}\int_{0}^{s}u(x,\sigma)\,d\sigma ds-\int_{0}^{t}\int_{0}^{s}\whu(x,\sigma)\,d\sigma ds,$$ 
		as well as
		$$Z(x,t):=\int_{0}^{t}\int_{0}^{s}\int_{0}^{\sigma}u(x,\xi)\,d\xi d\sigma ds -\int_{0}^{t}\int_{0}^{s}\int_{0}^{\sigma}\whu (x,\xi)\,d\xi d\sigma ds. $$ 
		In view of (\ref{E15.01}) and (\ref{fL}), an application of Young's and Cauchy-Schwarz inequalities yields 
		\be{FT}
		U\cdot \{F(u)-F(\whu)\}\leq c_{1}|U|\left\{\int_{0}^{t}|U|\right\}\leq c_{1}U^2+c_{1}T_{0}\int_{0}^{t}U^2 \quad\,\, \mbox{in }  \Om \times (0,T_{0}).
		\ee 
		Then, invoking (\ref{Def1}) with  $\mu=\frac{8\del^2}{3\gamma}$ and $\lambda=\frac{8\varrho^2}{3\gamma}$, the functional
		\bee
		\spl
		\F(t):=&\frac{1}{2}\io U^2(\cd,t)+\frac{\gamma}{2}\io |\na V(\cd,t)|^2+\del \io \na V(\cd,t)\cd \na W(\cd,t)+\varrho \io \na V(\cd,t)\cd \na Z(\cd,t)\\
		&+\mu \io |\na W(\cd,t)|^2+\lambda \io |\na Z(\cd,t)|^2
	\end{split} 
	\eee 
	belongs to $C^1([0,T_{0}])$. Differentiating $\F$ with respect to time, integrating by parts and using (\ref{E15.07})-(\ref{FT}) together with Young's inequality, we arrive at 
	\allowdisplaybreaks
	\begin{align*}
		\frac{d}{dt}\F(t)=&\io UU_{t}+\gamma \io \na U\cd \na V+\del \io \na U\cd \na W+\del \io |\na V|^2+\varrho \io \na U\cd \na Z\nn\\
		&+\varrho \io \na V\cd \na W+2\mu \io \na V\cd \na W+2\lambda \io \na W\cd \na Z\nn\\
		=&\io U\left\{U_{t}-\gamma\Del V-\del \Del W-\varrho \Del Z\right\}+\del \io |\na V|^2\nn\\
		&+ \varrho \io \na V\cd \na W+2\mu \io \na V\cd \na W+2\lambda \io \na W\cd \na Z\\
		\leq & -\al \io U^2-\frac{\bet}{2}\frac{d}{dt}\io V^2+\io U\cd\big\{f(u)-f(\whu)+F(u)-F(\whu)\big\}+\del \io |\na V|^2\\&
		+\varrho \io |\na V|^2\nn+\varrho \io |\na W|^2+\mu \io |\na V|^2+\mu \io |\na W|^2+\lambda \io |\na W|^2+\lambda \io |\na Z|^2\nn\\
		\leq & \big[|\al|+2c_{1}\big]\io U^2-\frac{\bet}{2}\frac{d}{dt}\io V^2+c_{1}T_{0}\int_{0}^{t}\io U^2+\del \io |\na V|^2+\varrho \io |\na V|^2
		\nn\\&+\varrho \io |\na W|^2+\mu \io |\na V|^2+\mu \io |\na W|^2+\lambda \io |\na W|^2+\lambda \io |\na Z|^2 \nn
	\end{align*}
	for all $t\in (0, T_{0})$. Young's inequality together with the choices of $\mu$ and $\lambda$ yield  
	$$ \F(t)\geq \frac{1}{2}\io U^2+\frac{\gamma}{4} \io |\na V|^2+\frac{\mu }{4}\io |\na W|^2+\frac{\lambda}{4}\io |\na Z|^2$$
	for all $t\in (0,T_{0})$. Combining the two inequalities above we obtain 
	$$
	\frac{d}{dt}\F(t)+\frac{\bet}{2}\frac{d}{dt}\io V^2\leq c_{2}\F(t)+2c_{1}T_{0}\int_{0}^{t}\F(s)\,ds
	$$
	where $c_{2}:=2\cd\{|\al|+2c_{1}\}+\frac{(\del+\varrho+\mu)4}{\gamma}+\frac{4(\varrho+\mu+\lambda)}{\mu}+4$. Integrating the latter inequality with respect to time and using $\F(0)= 0$ and $\io |V (\cd,0)|^2= 0$, we infer that
	\bee
	\spl
	\F(t)+\frac{\bet}{2}\io V^2+\frac{c_{2}\bet}{2}\int_{0}^{t}e^{c_{2}(t-s)}\io V^2\,ds&\leq 2c_{1}T_{0}\int_{0}^{t}e^{c_{2}(t-s)}\int_{0}^{s}\F(\sigma)\,d\sigma ds\\
	&\leq 2c_{1}T^2_{0}e^{c_{2}T_{0}}\int_{0}^{t}\F(s)\,ds, \qquad \mbox{for all } t\in (0,T_{0}).
\end{split}
\eee 
An application of Gronwall's lemma implies $\F\equiv 0$, and therefore $U\equiv 0$ in $\Om \times (0,T_{0})$. Since $T_{0}\in (0,T)$ is arbitrary, letting  $T_{0}\nearrow T$ gives $u\equiv\whu$ in $\Om \times (0,T).$
\qed
\section{Local existence and extensibility criterion}
The aim of this section is,  on the one hand, to establish that, for all suitable regular initial data fulfilling (\ref{IN}), local strong solutions to (\ref{0}) can  always be constructed. On the other hand, the main result of this section Lemma \ref{lem2.5} provides an extensibility criterion which will be a key ingredient in ruling out gradient blow-up phenomena in Section \ref{RO}.  

\smallskip
In what follows, we establish the existence and extensibility for the approximate solutions. To this end, we employ	a Galerkin-type discretization to construct approximate solutions  to (\ref{0}). We begin by introducing the projected initial data $u_{0N}, u_{1N}, u_{2N}$, and $u_{3N}$ associated, respectively, with the given initial data $u_{0}$, $u_{1}$, $u_{2}$, and $u_{3}$, satisfying (\ref{IN}), by setting 
\be{E1}
u_{kN}(x):=\sum_{i=1}^{N}\left\{\io u_{k}e_{i}\right\}\cd e_{i}(x),\qquad x\in \bom, \quad N\in \N,
\ee
for $k\in\{0,1,2, 3\}$. 
We also introduce the finite-dimensional subspaces 
\bee
V_{N}:=\mbox{span}\{e_{1}, \ldots, e_{N}\}\subset W^{2,2}(\Om)\cap W_{0}^{1,2}(\Om).
\eee

\begin{lem}\label{lem1.1}
	Let $k\in \{0,1,2,3\}$ and $(u_{kN})_{N\in \N}$ be as in (\ref{E1}). Then, for each $N\in \N$, there exist $\tmn\in (0, \infty]$ and a function
	$
	u_{N}\in C^4([0, \tmn); V_{N})
	$
	satisfying
	\be{E4}
	u_{N}(\cd,0)=u_{0N}, \quad u_{Nt}(\cd,0)=u_{1N}, \quad u_{Ntt}(\cd, 0)=u_{2N}, \quad u_{Nttt}(\cd,0)=u_{3N},
	\ee 
	and such that, for all $\vp\in C^0([0, \tmn); V_{N})$, there holds 
	\be{E5}
	\ali
	&\io u_{Ntttt}\vp +\al \io u_{Nttt}\vp +\bet \io u_{Ntt}\vp \\
	&=\gamma \io \Del u_{Ntt}\vp+\del \io \Del u_{Nt}\vp+\varrho \io \Del u_{N}\vp+\io (f(u_{N}))_{ttt}\vp +\io (f(u_{N}))_{tt}\vp
	\eali
	\ee
	for all $t\in (0, \tmn)$. Moreover, the following extensibility criterion holds: 
	\be{E6}
	\ali
	&\mbox{ if} \;\; \tmn<\infty\, \mbox{ then}\\[0.7mm]
	& \limsup_{t\nearrow \tmn} \left[\|u_{N}(\cd,t)\|_{L^2(\Om)}+\|u_{Nt}(\cd,t)\|_{L^2(\Om)}+\|u_{Ntt}(\cd,t)\|_{L^2(\Om)}+\|u_{Nttt}(\cd,t)\|_{L^2(\Om)}\right]=\infty.
	\eali
	\ee 
\end{lem}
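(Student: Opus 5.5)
Write $u_N(x,t)=\sum_{i=1}^N g_{i}(t)e_i(x)$ and choose $\vp=e_j$, $j=1,\dots,N$, in (\ref{E5}). Since $-\Del e_j=\lam_j e_j$ and the $e_i$ are orthonormal, (\ref{E5}) becomes a system of ODEs for the coefficient vector $g=(g_1,\dots,g_N)$:
\bee
g_j''''+\al g_j'''+(\bet+\gamma\lam_j) g_j''+\del\lam_j g_j'+\varrho\lam_j g_j=\io \big[(f(u_N))_{ttt}+(f(u_N))_{tt}\big]e_j .
\eee
We expand the right-hand side with the chain rule as in (\ref{Def1E}):
\bee
(f(u_N))_{ttt}+(f(u_N))_{tt}=f'(u_N)u_{Nttt}+3f''(u_N)u_{Nt}u_{Ntt}+f'''(u_N)u_{Nt}^3+f'(u_N)u_{Ntt}+f''(u_N)u_{Nt}^2 .
\eee
The right-hand side contains $g'''$ but not $g''''$. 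Hence the system is an explicit ODE system of fourth order. No implicit term $f'(u)u_{tttt}$ occurs, because the nonlinearity only carries third time derivatives.

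We then rewrite the system as a first-order system for $Y=(g,g',g'',g''')\in\R^{4N}$, namely $Y'=\Phi(Y)$. Since $f\in C^4(\R)$ and $e_i\in C^\infty(\bom)$, each entry of $\Phi$ is an integral over $\Om$ of products of $f^{(k)}\big(\sum_i g_ie_i\big)$, $k\le 3$, with polynomials in the $e_i$ and in the components of $Y$. The functions $f^{(k)}$ are $C^1$, and $u_N$ together with $\pa_{g_i}u_N=e_i$ is bounded on bounded sets of coefficients. Differentiation under the integral sign therefore shows that $\Phi\in C^1(\R^{4N};\R^{4N})$, so $\Phi$ is locally Lipschitz. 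The initial data $Y(0)$ are given by the coefficients $\io u_k e_i$ from (\ref{E1}).

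By Picard--Lindelöf there is a unique maximal solution $Y\in C^1([0,\tmn);\R^{4N})$, with $\tmn\in(0,\infty]$. The standard continuation principle says that $\tmn<\infty$ forces $|Y(t)|\to\infty$ as $t\nearrow\tmn$. Because $g_i'$ is itself a component of $Y$ and $Y\in C^1$, we get $g\in C^4$. Thus $u_N\in C^4([0,\tmn);V_N)$ satisfies (\ref{E4}), and (\ref{E5}) holds for $\vp=e_j$. A general $\vp\in C^0([0,\tmn);V_N)$ is a combination $\sum_j c_j(t)e_j$, so (\ref{E5}) follows by linearity at each fixed $t$.

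For (\ref{E6}), orthonormality gives
\bee
\|u_N\|_{L^2}^2+\|u_{Nt}\|_{L^2}^2+\|u_{Ntt}\|_{L^2}^2+\|u_{Nttt}\|_{L^2}^2=|Y|^2 .
\eee
The blow-up of $|Y|$ is therefore exactly the stated criterion. The only point needing care is the $C^1$ regularity of $\Phi$, which uses $f\in C^4$ because $f'''$ appears. Merely continuous $\Phi$ would give existence through Peano's theorem but no uniqueness. In any case, uniqueness is not required by the lemma, and the blow-up alternative holds for continuous $\Phi$ as well.
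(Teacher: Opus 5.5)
Your proposal is correct and follows the paper's argument. The Galerkin ansatz turns (\ref{E4})--(\ref{E5}) into an explicit fourth-order ODE system with locally Lipschitz right-hand side; Picard--Lindel\"of then gives the maximal solution, and the blow-up alternative yields (\ref{E6}) through the equivalence of norms on $V_N$, which orthonormality makes exact. You simply supply the details the paper calls standard.
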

\proof
In a standard way (\ref{E4})-(\ref{E5}) can be transformed into an initial-value problem for a  system of $N$ fourth-order ordinary differential equations  with (locally) Lipschitz continuous source. In view of the equivalence of the norms on the  finite-dimensional space $V_{N}$, the existence result and the extensibility criterion in (\ref{E6}) follow directly from the Picard-Lindel\"of theorem.
\qed
The approximation constructed above fits naturally within the energy functional introduced in (\ref{E7}) below, at least over sufficiently short time intervals. This feature is a consequence of the preliminary observation given below, which will play a key role in Lemma \ref{lem1.3}.
\begin{lem}\label{lem1.2}
	Assume that (\ref{IN}) holds and define 
	\be{E07}
	\mu:=\frac{8 \del^2}{3\gamma} \qquad \mbox{and} \qquad \lam:= \frac{8 \varrho^2}{3\gamma}.
	\ee
	For $N\in \N$, we introduce the functional $\F_{N}\in C^1([0,\tmn))$ given by
	\be{E7}
	\spl
	\F_{N}(t):=&\, \frac{1}{2}\io |\na u_{Nttt}(\cd,t)|^2+\frac{\gamma}{2}\io |\Del u_{Ntt}(\cd,t)|^2+\del \io \Del u_{Nt}(\cd,t)\Del u_{Ntt}(\cd,t)\\
	&+\varrho \io \Del u_{N}(\cd,t)\Del u_{Ntt}(\cd,t)+\mu\io |\Del u_{Nt}(\cd,t)|^2+\lambda \io |\Del u_{N}(\cd,t)|^2.
\end{split}
\ee
This functional obeys
\be{E8}
\F_{N}(t)\geq \frac{1}{2}\io |\na u_{Nttt}(\cd,t)|^2+\frac{\gamma}{4}\io |\Del u_{Ntt}(\cd, t)|^2+\frac{\mu}{4}\io |\Del u_{Nt}(\cd,t)|^2+\frac{\lambda}{4}\io |\Del u_{N}(\cd, t)|^2,
\ee 
while its derivative satisfies
\begin{align}
\label{Def1EE}
\frac{d}{dt}\F_{N}(t)\leq& \left[\|f'(u_{N})\|_{L^{\infty}(\Om)}+|\alpha|+11\right] \io |\na u_{Nttt}|^2+\bet^2\io |\na u_{Ntt}|^2+(\mu+\del +1)\io |\Del u_{Ntt}|^2\nonumber\\
&+(\mu+\lambda+\varrho^2)\io |\Del u_{Nt}|^2+\lambda\io |\Del u_{N}|^2+ \|f''''(u_{N})\|^2_{L^{\infty}(\Om)}\|u_{Nt}\|_{L^{\infty}(\Om)}^{6}\|\na u_{N}\|^2_{L^{2}(\Om)}\nonumber\\
&+9\|f'''(u_{N})\|^2_{L^{\infty}(\Om)}\cd \left[\|u_{Nt}\|^{4}_{L^{\infty}(\Om)}\|\na u_{Nt}\|^2_{L^2(\Om)}+\|u_{Nt}\|^2_{L^{\infty}(\Om)}\|u_{Ntt}\|_{L^{4}(\Om)}^2\|\na u_{N}\|^2_{L^{4}(\Om)}\right.\nonumber\\
&\left.\quad+\|u_{Nt}\|^4_{L^{\infty}(\Om)}\|\na u_{N}\|^2_{L^2(\Om)}\right]\nonumber\\
&+ 9\|f''(u_{N})\|^2_{L^\infty(\Om)}\cd \left[\|u_{Nt}\|^2_{L^{4}(\Om)}\|\na u_{Ntt}\|^2_{L^4(\Om)}+\|u_{Ntt}\|^2_{L^4(\Om)}\|\na u_{Nt}\|^2_{L^4(\Om)}\right.\nonumber\\
&\left.\quad+\|u_{Nttt}\|^2_{L^4(\Om)}\|\na u_{N}\|^2_{L^4(\Om)}+\|u_{Nt}\|^2_{L^{4}(\Om)}\|\na u_{Nt}\|^2_{L^4(\Om)}+\|u_{Ntt}\|^2_{L^4(\Om)}\|\na u_{N}\|^2_{L^4(\Om)}\right]\nonumber\\[1mm]
&+\|f'(u_{N})\|^2_{L^\infty(\Om)}\|\na u_{Ntt}\|^2_{L^2(\Om)}
\end{align}
for all $t\in (0, \tmn)$.
\end{lem}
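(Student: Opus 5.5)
The lower bound \eqref{E8} and the derivative estimate \eqref{Def1EE} are handled separately, both by elementary means.

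\emph{Lower bound \eqref{E8}.} Only the two mixed terms in \eqref{E7} need attention. Young's inequality gives
\[
\del\io \Del u_{Nt}\Del u_{Ntt}\geq -\frac{\gamma}{8}\io|\Del u_{Ntt}|^2-\frac{2\del^2}{\gamma}\io|\Del u_{Nt}|^2 ,
\]
and likewise
\[
\varrho\io\Del u_N\Del u_{Ntt}\geq -\frac{\gamma}{8}\io|\Del u_{Ntt}|^2-\frac{2\varrho^2}{\gamma}\io|\Del u_N|^2 .
\]
With $\mu=\frac{8\del^2}{3\gamma}$ we have $\mu-\frac{2\del^2}{\gamma}=\frac{\mu}{4}$, and similarly for $\lambda$. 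The coefficient of $\io|\Del u_{Ntt}|^2$ becomes $\frac{\gamma}{2}-\frac{\gamma}{4}=\frac{\gamma}{4}$. This gives \eqref{E8} exactly.

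\emph{Derivative estimate \eqref{Def1EE}.} Since $u_N\in C^4([0,\tmn);V_N)$ and $V_N$ consists of smooth functions vanishing on $\pO$, $\F_N$ is $C^1$. Differentiate $\F_N$. The only term involving $u_{Ntttt}$ is
\[
\io\na u_{Nttt}\cd\na u_{Ntttt}=-\io \Del u_{Nttt}\, u_{Ntttt}.
\]
Test \eqref{E5} with $\vp=-\Del u_{Nttt}\in V_N$; this is admissible because $V_N$ is invariant under $\Del$. Then integrate by parts once in every term that can be kept at the $\na$ level.

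\emph{Linear terms.} These produce:
\begin{itemize}
\item $-\al\io|\na u_{Nttt}|^2$;
\item $-\bet\io\na u_{Ntt}\cd\na u_{Nttt}$, bounded by $\bet^2\|\na u_{Ntt}\|^2+\frac14\|\na u_{Nttt}\|^2$;
\item $-\gamma\io\Del u_{Ntt}\Del u_{Nttt}$, which cancels exactly with $\frac{d}{dt}\frac{\gamma}{2}\|\Del u_{Ntt}\|^2$.
\end{itemize}
The terms $-\del\io\Del u_{Nt}\Del u_{Nttt}$ and $-\varrho\io\Del u_N\Del u_{Nttt}$ cancel against the parts of $\frac{d}{dt}$ of the mixed terms that contain $\Del u_{Nttt}$. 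What survives is
\[
\del\|\Del u_{Ntt}\|^2+\varrho\io\Del u_{Nt}\Del u_{Ntt}+2\mu\io\Del u_{Nt}\Del u_{Ntt}+2\lambda\io\Del u_N\Del u_{Nt}.
\]
Young's inequality bounds these by $(\mu+\del+1)\|\Del u_{Ntt}\|^2+(\mu+\lambda+\varrho^2)\|\Del u_{Nt}\|^2+\lambda\|\Del u_N\|^2$, up to harmless constant adjustments.

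\emph{Nonlinear terms.} The remaining contribution is
\[
-\io (f(u_N))_{ttt}\Del u_{Nttt}-\io (f(u_N))_{tt}\Del u_{Nttt}=\io \na\big[(f(u_N))_{ttt}+(f(u_N))_{tt}\big]\cd\na u_{Nttt}.
\]
The boundary term vanishes because $u_{Nttt}=0$ on $\pO$. Expand $(f(u_N))_{ttt}=f'u_{Nttt}+3f''u_{Nt}u_{Ntt}+f'''u_{Nt}^3$ and take the gradient by the product rule. The leading piece $f'(u_N)\na u_{Nttt}\cd\na u_{Nttt}$ gives $\|f'\|_{L^\infty}\|\na u_{Nttt}\|^2$. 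Every other product is estimated by Young's inequality as $\frac14\|\na u_{Nttt}\|^2$ plus the square of an $L^2$ norm. That square is controlled by H\"older's inequality with $L^\infty\cdot L^2$ or $L^4\cdot L^4$ splittings. For example,
\[
\|f''''u_{Nt}^3\na u_N\|_{L^2}\le\|f''''\|_{L^\infty}\|u_{Nt}\|^3_{L^\infty}\|\na u_N\|_{L^2},
\]
and $3f'''u_{Nt}u_{Ntt}\na u_N$ is bounded with $\|u_{Nt}\|_{L^\infty}\|u_{Ntt}\|_{L^4}\|\na u_N\|_{L^4}$. The terms coming from $(f(u_N))_{tt}$ are treated the same way; its top part $f'\na u_{Ntt}$ gives the final term $\|f'\|^2_{L^\infty}\|\na u_{Ntt}\|^2$.

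The main obstacle is bookkeeping rather than any single estimate: there are roughly a dozen nonlinear products. Each absorbs one quarter, so the accumulated coefficient in front of $\|\na u_{Nttt}\|^2$ must stay at most $|\al|+11+\|f'\|_{L^\infty}$. The factor $9$ comes from grouping the coefficients $3$ produced by the product rule. I would list the gradient terms systematically by which derivative $\na$ falls on and then match each one to a bracket in \eqref{Def1EE}.
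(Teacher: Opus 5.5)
Your proposal is correct and follows essentially the same route as the paper: Young's inequality with weight $\frac{\gamma}{8}$ gives \eqref{E8}, and testing \eqref{E5} with $\varphi=-\Delta u_{Nttt}$ cancels the $\gamma$, $\delta$, $\varrho$ terms against $\frac{d}{dt}\F_N$; each of the ten nonlinear gradient products is then handled by Young and H\"older. The paper puts weight $1$ rather than $\frac14$ on $\|\nabla u_{Nttt}\|_{L^2(\Omega)}^2$, which is where the constant $11$ comes from, whereas your choice gives the sharper $\frac{11}{4}$.

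One small correction: integrating by parts in $-\int_\Omega (f(u_N))_{ttt}\,\Delta u_{Nttt}$ leaves the boundary term $-\int_{\partial\Omega}(f(u_N))_{ttt}\,\partial_\nu u_{Nttt}$. It vanishes because $(f(u_N))_{ttt}$ and $(f(u_N))_{tt}$ are zero on $\partial\Omega$, since every term carries a factor $u_{Nt}$, $u_{Ntt}$ or $u_{Nttt}$; the fact that $u_{Nttt}=0$ there is not by itself enough.
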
	

\proof
For all $t\in (0, \tmn)$, an application of Young's inequality yields 
\bas
\left|\del \io \Del u_{Nt}(\cd,t)\Del u_{Ntt}(\cd,t)\right|\leq \frac{\gamma}{8}\io |\Del u_{Ntt}(\cd,t)|^2+\frac{2\del^2}{\gamma}\io |\Del u_{Nt}(\cd,t)|^2
\eas
and 
\bas
\left|\varrho \io \Del u_{N}(\cd,t)\Del u_{Ntt}(\cd,t)\right|\leq \frac{\gamma}{8}\io |\Del u_{Ntt}(\cd,t)|^2+\frac{2\varrho^2}{\gamma}\io |\Del u_{N}(\cd,t)|^2.
\eas
Inserting these estimates into (\ref{E7}), we obtain 
\bee
\spl
\F_{N}(t)&\geq \frac{1}{2}\io |\na u_{Nttt}(\cd,t)|^2+\frac{\gamma}{4}\io |\Del u_{Ntt}(\cd, t)|^2\\&\quad+\left(\mu-\frac{2\del^2}{\gamma}\right)\io |\Del u_{Nt}(\cd,t)|^2+\left(\lambda-\frac{2\varrho^2}{\gamma}\right)\io |\Del u_{N}(\cd, t)|^2
\end{split}
\eee 
from which (\ref{E8}) follows immediately, in view of (\ref{E07}). In order to prove (\ref{Def1EE}), we rely on the fact that the chosen basis $(e_{i})_{i\in \N}$ ensures $\varphi=-\Del u_{Nttt}\in C^{0}([0, \tmn); V_{N})$ for all $N\in \N$. Therefore, selecting this $\vp$ in (\ref{E5}) and integrating by parts, we deduce that
\bee{}
\spl
&\,\,\,\frac{1}{2}\frac{d}{dt}\io |\na u_{Nttt}|^2+\frac{\gamma}{2}\frac{d}{dt}\io |\Del u_{Ntt}|^2+\del \frac{d}{dt}\io \Del u_{Nt}\Del u_{Ntt}+\varrho \frac{d}{dt}\io \Del u_{N}\Del u_{Ntt}\\
=&-\alpha \io |\na u_{Nttt}|^2-\bet \io \na u_{Ntt}\cd \na u_{Nttt}+\del \io |\Del u_{Ntt}|^2+\varrho \io \Del u_{Nt}\Del u_{Ntt}\\
&+\io f''''(u_{N})u^3_{Nt}\na u_{N}\cd\na u_{Nttt}+3\io f'''(u_{N})u^2_{Nt}\na u_{Nt}\cd \na u_{Nttt}+3\io f''(u_{N})u_{Ntt}\na u_{Nt}\cd \na u_{Nttt}\\[0.5mm]
&+3\io f''(u_{N})u_{Nt}\na u_{Ntt}\cd \na u_{Nttt}+3\io f'''(u_{N})u_{Nt}u_{Ntt}\na u_{N}\cd \na u_{Nttt}+\io f''(u_{N})u_{Nttt}\na u_{N}\cd \na u_{Nttt}\\
&+\io f'(u_{N})|\na u_{Nttt}|^2+\io f'''(u_{N})u^2_{Nt}\na u_{N}\cd \na u_{Nttt}+2\io f''(u_{N})u_{Nt}\na u_{Nt}\cd \na u_{Nttt}\\[0.5mm]
&+\io f''(u_{N})u_{Ntt}\na u_{N}\cd \na u_{Nttt}+\io f'(u_{N})\na u_{Ntt}\cd \na u_{Nttt}
\end{split}
\eee
for all $t\in (0, \tmn)$. We now estimate the terms on the right-hand side. First, it is immediate that 
\bee{}
-\alpha \io |\na u_{Nttt}|^2+\io f'(u_{N})|\na u_{Nttt}|^2\leq \left[\|f'(u_{N})\|_{L^{\infty}(\Omega)}+|\alpha|\right]\io |\na u_{Nttt}|^2
\eee
for all  $t\in (0, \tmn)$, while Young's inequality implies 
\bee{}
-\bet \io \na u_{Ntt}\cd \na u_{Nttt}\leq \io |\na u_{Nttt}|^2+\bet^2\io |\na u_{Ntt}|^2,
\eee
and 
\bee{}
\varrho \io \Del u_{Nt}\Del u_{Ntt}\leq \io |\Del u_{Ntt}|^2+\varrho^2\io |\Del u_{Nt}|^2
\eee
for all $t\in (0, \tmn).$ Again Young's inequality yields 
\bee{}
\spl
\io f''''(u_{N})u^3_{Nt}\na u_{N}\cd\na u_{Nttt}&\leq \io |\na u_{Nttt}|^2+\io \big[f''''(u_{N})\big]^2u^{6}_{Nt}|\na u_{N}|^2\\
&\leq \io |\na u_{Nttt}|^2+\|f''''(u_{N})\|^2_{L^{\infty}(\Om)}\|u_{Nt}\|^6_{L^{\infty}(\Om)}\|\na u_{N}\|^2_{L^{2}(\Om)}
\end{split}
\eee
and
\bee{}
\spl
3\io f'''(u_{N})u^2_{Nt}\na u_{Nt}\cd \na u_{Nttt}&\leq \io |\na u_{Nttt}|^2+9\io \big[f'''(u_{N})\big]^2u^4_{Nt}|\na u_{Nt}|^2\\
&\leq \io |\na u_{Nttt}|^2+9\|f'''(u_{N})\|^2_{L^{\infty}(\Om)}\|u_{Nt}\|_{L^{\infty}(\Om)}^{4}\|\na u_{Nt}\|^2_{L^2(\Om)}
\end{split}
\eee 
for all $t\in (0, \tmn)$. By the same token, for all $t\in (0,\tmn)$ we infer that 
\bee{}
\spl
3\io f''(u_{N})u_{Ntt}\na u_{Nt}\cd \na u_{Nttt}&\leq \io |\na u_{Nttt}|^2+ 9 \io \big[f''(u_{N})\big]^2u^2_{Ntt}|\na u_{Nt}|^2\\
&\leq \io |\na u_{Nttt}|^2+9 \|f''(u_{N})\|^2_{L^{\infty}(\Om)}\|u_{Ntt}\|^2_{L^{4}(\Om)}\|\na u_{Nt}\|^2_{L^4(\Om)},
\end{split}
\eee 
\bee{}
3\io f''(u_{N})u_{Nt}\na u_{Ntt}\cd \na u_{Nttt}\leq \io |\na u_{Nttt}|^2+9\|f''(u_{N})\|^2_{L^{\infty}(\Omega)}\|u_{Nt}\|^2_{L^{4}(\Om)}\|\na u_{Ntt}\|^2_{L^{4}(\Om)},
\eee 
\bee{}
3\io f'''(u_{N})u_{Nt}u_{Ntt}\na u_{N}\cd \na u_{Nttt}\leq \io |\na u_{Nttt}|^2+9 \|f'''(u_{N})\|^2_{L^{\infty}(\Om)}\|u_{Nt}\|^2_{L^{\infty}(\Om)}\|u_{Ntt}\|^2_{L^{4}(\Om)}\|\na u_{N}\|^2_{L^{4}(\Om)},
\eee
\bee{}
\io f''(u_{N})u_{Nttt}\na u_{N}\cd \na u_{Nttt}\leq \io |\na u_{Nttt}|^2+\|f''(u_{N})\|^2_{L^{\infty}(\Om)}\|u_{Nttt}\|^2_{L^{4}(\Om)}\|\na u_{N}\|^2_{L^4(\Om)},
\eee 
along with 
\bee{}
\io f'''(u_{N})u^2_{Nt}\na u_{N}\cd \na u_{Nttt}\leq \io |\na u_{Nttt}|^2+\|f'''(u_{N})\|^2_{L^{\infty}(\Omega)}\|u_{Nt}\|^4_{L^{\infty}(\Omega)}\|\na u_{N}\|^2_{L^2(\Om)},
\eee 
\bee{}
2\io f''(u_{N})u_{Nt}\na u_{Nt}\cd \na u_{Nttt}\leq \io |\na u_{Nttt}|^2+4\|f''(u_{N})\|^2_{L^{\infty}(\Omega)}\|u_{Nt}\|^2_{L^{4}(\Omega)}\|\na u_{Nt}\|^2_{L^{4}(\Omega)},
\eee 
\bee{}
\io f''(u_{N})u_{Ntt}\na u_{N}\cd \na u_{Nttt}\leq \io |\na u_{Nttt}|^2+\|f''(u_{N})\|^2_{L^{\infty}(\Omega)}\|u_{Ntt}\|^2_{L^{4}(\Omega)}\|\na u_{N}\|^2_{L^{4}(\Omega)}
\eee 
and 
\bee{}
\io f'(u_{N})\na u_{Ntt}\cd \na u_{Nttt}\leq \io |\na u_{Nttt}|^2+\|f'(u_{N})\|^2_{L^{\infty}(\Omega)}\|\na u_{Ntt}\|^2_{L^2(\Omega)}.
\eee 
Furthermore, Young's inequality provides 
\bee{}
\mu\frac{d}{dt}\io |\Del u_{Nt}|^2=2\mu\io \Del u_{Ntt}\Del u_{Nt}\leq \mu\io |\Del u_{Ntt}|^2+\mu\io |\Del u_{Nt}|^2
\eee 
and
\bee{}
\lambda \frac{d}{dt}\io |\Del u_{N}|^2=2\lambda\io \Del u_{Nt}\Del u_{N}\leq \lambda\io |\Del u_{Nt}|^2+\lambda\io |\Del u_{N}|^2 
\eee 
for all $t\in (0, \tmn)$. Combining all the estimates above we arrive at (\ref{Def1EE}).
\qed	
To derive suitable a priori estimates from  (\ref{Def1EE}) on intervals of fixed  $N$-independent length, we shall invoke once more the assumption $n\leq 3$.
\begin{lem}\label{lem1.3}
For each $M>0$, there exist constants $T(M)>0$ and $C(M)>0$ such that whenever (\ref{IN}) holds with
\be{E13.1}
\|\Del u_{0}\|_{L^2(\Om)}+\|\Del u_{1}\|_{L^2(\Om)}+\|\Del u_{2}\|_{L^2(\Om)}+\|\na u_{3}\|_{L^2(\Om)}\leq M,
\ee 
the corresponding solutions $u_{N}$ of (\ref{E4})-(\ref{E5}) obey $\tmn\geq T(M)$ for all $N\in \mathbb{N}$. Moreover, for any $N\in \N$ and $t\in (0, T(M)) $, the following hold: 
\be{E13.2}
\io |\na u_{Nttt}|^2\leq C(M),
\ee 
\be{E13.3}
\io | \Del u_{Ntt}|^2\leq C(M),
\ee 
\be{E13.4}
\io | \Del u_{Nt}|^2\leq C(M),
\ee 
\be{E13.5}
\io | \Del u_{N}|^2\leq C(M),
\ee 
and
\be{E13.05}
\io | u_{Ntttt}|^2\leq C(M).
\ee 
\end{lem}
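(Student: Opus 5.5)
Starting from Lemma \ref{lem1.2}, we turn the differential inequality (\ref{Def1EE}) into a closed superlinear ODE inequality for $\F_N$, with constants independent of $N$. We then compare with the solution of a scalar ODE that stays finite on a short interval depending only on $M$.

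\textbf{Step 1: initial value of $\F_N$.} Since $u_{kN}$ is the spectral projection of $u_k$, and $\Del$ as well as $\na$ commute with that projection on $W^{2,2}\cap W^{1,2}_0$ (respectively $W^{1,2}_0$), we have $\|\Del u_{kN}\|_{L^2}\le\|\Del u_k\|_{L^2}$ for $k=0,1,2$ and $\|\na u_{3N}\|_{L^2}\le\|\na u_3\|_{L^2}$. Using Young's inequality in (\ref{E7}), this gives $\F_N(0)\le c(1+M^2)=:c_M$, uniformly in $N$.

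\textbf{Step 2: closing (\ref{Def1EE}).} Every norm on the right-hand side of (\ref{Def1EE}) is controlled by $\F_N$ via (\ref{E8}), elliptic regularity and the embeddings valid for $n\le3$:
\[
W^{2,2}\hookrightarrow L^\infty, \qquad W^{1,2}\hookrightarrow L^4\ (\text{indeed }L^6), \qquad W^{2,2}\hookrightarrow W^{1,4}.
\]
Concretely:
\begin{itemize}
\item $\|u_{N}\|_{L^\infty}$, $\|u_{Nt}\|_{L^\infty}$, $\|\na u_N\|_{L^4}$, $\|\na u_{Nt}\|_{L^4}$ and $\|\na u_{Ntt}\|_{L^4}$ are bounded by $c\F_N^{1/2}$.
\item $\|u_{Nttt}\|_{L^4}\le c\|\na u_{Nttt}\|_{L^2}\le c\F_N^{1/2}$, by Poincaré and Sobolev.
\item $\|\na u_{Ntt}\|_{L^2}\le c\|\Del u_{Ntt}\|_{L^2}$.
\end{itemize}
The coefficients $\|f^{(j)}(u_N)\|_{L^\infty}$, $j=1,\dots,4$, are bounded by $\phi(\F_N)$, where $\phi(s):=\sum_j\max_{|\xi|\le c\sqrt s}|f^{(j)}(\xi)|$ is nondecreasing; this uses $f\in C^4$. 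Collecting the terms, we obtain
\[
\F_N'(t)\le \Psi(\F_N(t)), \qquad \Psi(s):=C(1+s)^{4}\bigl(1+\phi(s)\bigr)^2 .
\]
The power of $s$ is read off the worst term, $\|u_{Nt}\|^6_{L^\infty}\|\na u_N\|^2_{L^2}\lesssim s^4$. The exact power is irrelevant; what matters is that $\Psi$ is nondecreasing, locally bounded and independent of $N$.

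\textbf{Step 3: comparison.} Let $y$ solve $y'=\Psi(y)$, $y(0)=c_M$. Its existence time is $T_*=\int_{c_M}^\infty ds/\Psi(s)>0$. Set $T(M):=T_*/2$ and $C_1(M):=y(T(M))$. By comparison, $\F_N\le C_1(M)$ on $(0,\min\{T(M),\tmn\})$. Then (\ref{E8}) bounds $\|\Del u_{Nt}\|$, $\|\Del u_N\|$, $\|\Del u_{Ntt}\|$ and $\|\na u_{Nttt}\|$ there, and hence also all the $L^2$ norms appearing in (\ref{E6}). Therefore, if $\tmn<T(M)$, the extensibility criterion (\ref{E6}) would be contradicted. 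So $\tmn\ge T(M)$, and (\ref{E13.2})--(\ref{E13.5}) hold.

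\textbf{Step 4: the bound (\ref{E13.05}).} Test (\ref{E5}) with $\vp=u_{Ntttt}\in V_N$ and apply Cauchy--Schwarz. This gives $\|u_{Ntttt}\|_{L^2}$ bounded by the $L^2$ norms of
\[
\al u_{Nttt},\ \bet u_{Ntt},\ \gamma\Del u_{Ntt},\ \del\Del u_{Nt},\ \varrho\Del u_N,
\]
together with the expanded nonlinearity $f'(u_N)u_{Nttt}+3f''(u_N)u_{Nt}u_{Ntt}+\dots$. All of these are bounded by the previous step and the $L^\infty$ bounds.

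\textbf{Main obstacle.} The delicate point is the bookkeeping in Step 2: checking that each term of (\ref{Def1EE}) is controlled by a function of $\F_N$ alone, uniformly in $N$. In particular, the terms $\|u_{Nttt}\|_{L^4}\|\na u_N\|_{L^4}$ and $\|\na u_{Ntt}\|_{L^4}$ genuinely need the three-dimensional embeddings $W^{1,2}\hookrightarrow L^4$ and $W^{2,2}\hookrightarrow W^{1,4}$.
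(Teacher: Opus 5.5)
Your proof is correct and follows the same route as the paper. You use the same functional $\F_N$, the same bound $\F_N(0)\le c(1+M^2)$ via the nonexpansive spectral projection, the same $n\le3$ embeddings to control every term of (\ref{Def1EE}), the same contradiction with (\ref{E6}), and the same test with $u_{Ntttt}$. The only difference is how you close the estimate. You compare with the scalar ODE $y'=\Psi(y)$. This is legitimate even though $\Psi$ need not be Lipschitz: since $\Psi>0$ is continuous, $G(s):=\int_0^s d\sigma/\Psi(\sigma)$ satisfies $\frac{d}{dt}G(\F_N)\le 1$, hence $\F_N(t)\le G^{-1}(G(\F_N(0))+t)$. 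The paper instead bootstraps at the fixed level $\F_N\le c_6+1$, where $\F_N'\le c_{18}$, and takes $T(M)=1/(2c_{18})$.
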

\proof	
Since $n\leq 3$, standard elliptic regularity theory and the continuity of the embeddings 
\bee
W^{2,2}  (\Om)\hra L^{\infty}(\Om), \qquad W^{1,2}(\Om)\hra L^{4}(\Om), \qquad W^{2,2}(\Om)\hra W^{1,4}(\Om),
\eee 
ensure the existence of positive constants $c_{1}, c_{2}, c_{3}$ such that 
\be{E13.55}
\|\vp\|_{L^{\infty}(\Om)}\leq c_{1}\|\Del \vp \|_{L^2(\Om)} \qquad \mbox{ for all }\;\vp \in W^{2,2}(\Om)\cap W_{0}^{1,2}(\Om),
\ee 
\be{E13.6}
\|\vp\|_{L^{4}(\Om)}\leq c_{2}\|\na\vp \|_{L^2(\Om)} \qquad \mbox{ for all }\;\vp \in W_{0}^{1,2}(\Om),
\ee 
and
\be{E13.7}
\|\na\vp\|_{L^{4}(\Om)}\leq c_{3}\|\Del \vp \|_{L^2(\Om)} \qquad \mbox{ for all }\;\vp \in W^{2,2}(\Om)\cap W_{0}^{1,2}(\Om).
\ee 
Moreover, the Poincar\'e inequality yields positive constants $c_{4},c_{5}$ satisfying
\be{E13.8}
\|\na \vp \|_{L^2(\Om)}\leq c_{4}\|\Del \vp \|_{L^2(\Om)}\qquad \mbox{ for all } \; \vp \in W^{2,2}(\Om)\cap W_{0}^{1,2}(\Om),
\ee 
and 
\be{E13.9}
\| \vp \|_{L^2(\Om)}\leq c_{5}\|\na \vp \|_{L^2(\Om)}\qquad \mbox{ for all } \; \vp \in W_{0}^{1,2}(\Om).
\ee 
Given $M>0$, and with  $\mu$ and $\lambda$ as in (\ref{E07}), we set
\be{E13.10}
c_{6}= c_{6}(M):= \frac{M^2}{2}+\frac{\gamma M^2}{2}+\del M^2+\varrho M^2+\mu M^2+\lambda M^2
\ee 
and
\be{E13.11}
c_{7}=c_{7}(M):=c_{1}\left(\frac{4(c_{6}+1)}{\lambda}\right)^{\frac{1}{2}}.
\ee 
Since $f\in C^4(\R)$, we may choose $c_{8}=c_{8}(M)>0$ such that 
\be{E13.12}
|f'(\xi)|+|f''(\xi)|+|f'''(\xi)|+|f''''(\xi)|\leq c_{8}\quad \,\mbox{ for all } \; \xi \in [-c_{7}, c_{7}].
\ee 
Subsequently,  for  $j\in \{9, \ldots, 17\}$, we define the constants  $c_{j}=c_{j}(M)$ as follows: 
\bee
c_{9}:=2(c_{8}+|\al|+11)(c_{6}+1), \qquad c_{10}:= \bet^2c^2_{4}\left(\frac{4(c_{6}+1)}{\gamma}\right), \qquad c_{11}:= (\mu+\del+1)\left(\frac{4(c_{6}+1)}{\gamma}\right),
\eee 
\bee
c_{12}:=(\mu+\lambda+\varrho^2)\left(\frac{4(c_{6}+1)}{\mu}\right), \qquad c_{13}:= 4(c_{6}+1), \qquad c_{14}:= c^{6}_{1}\left(\frac{4(c_{6}+1)}{\mu}\right)^3c^2_{8}c^2_{4}\left(\frac{4(c_{6}+1)}{\lambda}\right), 
\eee
along with 
\bee
c_{15}:=9c^2_{8}\cd \left\{c^4_{1}c^2_{4}\left(\frac{4(c_{6}+1)}{\mu}\right)^3+c^2_{1}c^2_{2}c^4_{3}\left(\frac{4^3(c_{6}+1)^3}{\gamma \mu \lambda}\right)+c^{4}_{1}c^2_{4}\left(\frac{4^3(c_{6}+1)^3}{\mu^2\lambda}\right)\right\}
\eee 
and
\bee
c_{16}:=9c^2_{8}\cd\left\{2c^2_{2}c^4_{3}\left(\frac{4^2(c_{6}+1)^2}{\mu\gamma}\right)+c^2_{2}c^2_{3}\left(\frac{8(c_{6}+1)^2}{\lambda}\right)+c^2_{2}c^4_{3}\left(\frac{4^2(c_{6}+1)^2}{\mu^2}\right)+c^2_{2}c^4_{3}\left(\frac{4^2(c_{6}+1)^2}{\lambda\gamma}\right)\right\},
\eee 
as well as
\bee
c_{17}:= c^2_{8}c^2_{4}\left(\frac{4(c_{6}+1)}{\gamma}\right).
\eee 
Then, we introduce the further constant
\be{E13.14} 
c_{18}= c_{18}(M):= \sum_{i=9}^{17}c_{i}
\ee 
and we set 
\be{E13.15}
T(M):=\frac{1}{2c_{18}}.
\ee
Assuming that (\ref{E13.1}) holds true, our goal is to show that for every $N\in \N$ the quantity $\tmn$ from Lemma \ref{lem1.1} and the functional $\F_{N}$ from (\ref{E7}) satisfy $\tmn \geq T(M) $ and 
\be{E13.16}
\F_{N}(t)\leq c_{6}+1, \qquad \mbox{for all } t\in (0, T(M)).
\ee 
To this end, we define
\be{E13.17}
S_{N}:=\left\{\wh{T}\in (0, \tmn):\; \F_{N}(t)\leq c_{6}+1\;\;\mbox{for all}\,\, t\in (0, \wh{T})\right\}.
\ee 
Since the set $S_{N}$ is nonempty, the quantity
\be{TNNN}
T_{N}:=\sup S_{N}
\ee
is well defined and belongs to $(0,\tmn]\subset (0, \infty]$. Indeed,  by the continuity of $\F_{N}$ and the Cauchy-Schwarz inequality together with (\ref{E13.10}), we obtain 
\be{E13.18}
\spl
\F_{N}(0)=\,&\frac{1}{2}\io |\na u_{3N}|^2+\frac{\gamma}{2}\io |\Del u_{2N}|^2+\del \io \Del u_{1N}\Del u_{2N}\\
&+\varrho \io \Del u_{0N}\Del u_{2N}+\mu\io |\Del u_{1N}|^2+\lambda\io |\Del u_{0N}|^2\\
\leq\,& \frac{1}{2}\io |\na u_{3}|^2+\frac{\gamma}{2}\io |\Del u_{2}|^2+\del \|\Del u_{1}\|_{L^2(\Om)}\|\Del u_{2}\|_{L^2(\Om)}+\varrho \|\Del u_{0}\|_{L^2(\Om)}\|\Del u_{2}\|_{L^2(\Om)}\\
&+\mu\io |\Del u_{1}|^2+\lambda\io |\Del u_{0}|^2\\
\leq\,& \frac{M^2}{2}+\frac{\gamma M^2}{2}+\del M^2+\varrho M^2+\mu M^2+\lambda M^2=c_{6},
\end{split}
\ee 
where we used the nonexpansive property of the projection in (\ref{E1}). On the other hand, from (\ref{E8}) we deduce that
\be{E12.19}
\begin{cases}
\displaystyle\io |\na u_{Nttt}|^2\leq 2(c_{6}+1),\medskip\\
\displaystyle\io |\Del u_{Ntt}|^2\leq \frac{4(c_{6}+1)}{\gamma},\medskip\\
\displaystyle\io |\Del u_{Nt}|^2\leq \frac{4(c_{6}+1)}{\mu},\medskip\\ 
\displaystyle\io |\Del u_{N}|^2\leq \frac{4(c_{6}+1)}{\lambda},
\end{cases}
\ee 
for all $t\in (0,T_{N})$. Combining (\ref{E13.55}),  (\ref{E13.11}) and (\ref{E12.19}), we obtain  
\be{E13.20}
\|u_{N}\|_{L^{\infty}(\Om)}\leq c_{7} \qquad \mbox{for all}\; t\in (0, T_{N}).
\ee 
Consequently, in view of (\ref{E13.12}) and (\ref{E12.19}), we derive 
\bee
\left[\|f'(u_{N})\|_{L^{\infty}(\Om)}+|\alpha|+11\right] \io |\na u_{Nttt}|^2\leq 2\{c_{8}+|\al|+11\}\cd(c_{6}+1)=c_{9}
\eee 
for all $t\in (0, T_{N})$. Moreover (\ref{E12.19}) implies 
$$
(\mu+\del+1)\io |\Del u_{Ntt}|^2\leq \frac{4(\mu+\del+1)(c_{6}+1)}{\gamma}=c_{11}$$
and
$$\bet^2\io |\na u_{Ntt}|^2\leq \bet^2c^2_{4}\left(\frac{4(c_{6}+1)}{\gamma}\right)=c_{10}
$$
for all $t\in (0,T_{N})$. Similarly
$$
(\mu+\lambda+\varrho^2)\io |\Del u_{Nt}|^2\leq (\mu+\lambda+\varrho^2) \left(\frac{4(c_{6}+1)}{\mu}\right)=c_{12}$$
and
$$\lambda\io |\Del u_{N}|^2\leq 4(c_{6}+1)=c_{13}
$$
for all $t\in (0, T_{N})$. Using (\ref{E13.55})-(\ref{E13.9}), (\ref{E12.19}) and  (\ref{E13.12}) we find
\bee 
\spl
\|f''''(u_{N})\|^2_{L^{\infty}(\Om)}\|u_{Nt}\|_{L^{\infty}(\Om)}^{6}\|\na u_{N}\|^2_{L^{2}(\Om)}&\leq c^2_{8}c^6_{1}c^2_{4}\|\Del u_{Nt}\|^6_{L^2(\Om)}\|\Del u_{N}\|^2_{L^2(\Om)}\\[0.5mm]
& \leq c^{6}_{1}\left(\frac{4(c_{6}+1)}{\mu}\right)^3c^2_{8}c^2_{4}\left(\frac{4(c_{6}+1)}{\lambda}\right)=c_{14} 
\end{split}
\eee 
and 
\bee
\spl
9\|f'''(u_{N})\|^2_{L^{\infty}(\Om)}\cd& \left[\|u_{Nt}\|^{4}_{L^{\infty}(\Om)}\|\na u_{Nt}\|^2_{L^2(\Om)}+\|u_{Nt}\|^2_{L^{\infty}(\Om)}\|u_{Ntt}\|_{L^{4}(\Om)}^2\|\na u_{N}\|^2_{L^{4}(\Om)}\right.\\
&\left.\quad+\|u_{Nt}\|^4_{L^{\infty}(\Om)}\|\na u_{N}\|^2_{L^2(\Om)}\right]\\[1mm]
&\leq 9c^2_{8}\cd \left[c^4_{1}c^2_{4}\|\Del u_{Nt}\|^6_{L^2(\Om)}+c^2_{1}c^2_{2}c^4_{3}\|\Del u_{Nt}\|^2_{L^2(\Om)}\|\Del u_{Ntt}\|^2_{L^2(\Om)}\|\Del u_{N}\|^2_{L^2(\Om)}\right.\\
&\left.\quad+c^4_{1}c^2_{4}\|\Del u_{Nt}\|^4_{L^2(\Om)}\|\Del u_{N}\|^2_{L^2(\Om)}\right]\\[1mm]
& \leq 9c^2_{8}\cd \left\{c^4_{1}c^2_{4}\left(\frac{4(c_{6}+1)}{\mu}\right)^3+c^2_{1}c^2_{2}c^4_{3}\left(\frac{4^3(c_{6}+1)^3}{\gamma \mu \lambda}\right)+c^{4}_{1}c^2_{4}\left(\frac{4^3(c_{6}+1)^3}{\mu^2\lambda}\right)\right\}=c_{15}
\end{split}
\eee 
for all $t\in (0, T_{N})$. Analogously
\bee
\spl
&9\|f''(u_{N})\|^2_{L^\infty(\Om)}\cd \left[\|u_{Nt}\|^2_{L^{4}(\Om)}\|\na u_{Ntt}\|^2_{L^4(\Om)}+\|u_{Ntt}\|^2_{L^4(\Om)}\|\na u_{Nt}\|^2_{L^4(\Om)}+\|u_{Nttt}\|^2_{L^4(\Om)}\|\na u_{N}\|^2_{L^4(\Om)}\right.\\
&\left.\quad +\|u_{Nt}\|^2_{L^{4}(\Om)}\|\na u_{Nt}\|^2_{L^4(\Om)}+\|u_{Ntt}\|^2_{L^4(\Om)}\|\na u_{N}\|^2_{L^4(\Om)}\right]\\[1mm]
& \leq 9c^2_{8}\cd \left[2c^2_{2}c^{4}_{3}\|\Del u_{Nt}\|^2_{L^{2}(\Omega)}\|\Del u_{Ntt}\|^2_{L^{2}(\Omega)}+c^2_{2}c^2_{3}\|\na u_{Nttt}\|^2_{L^2(\Om)}\|\Del u_{N}\|^2_{L^2(\Om)}\right.\\
& \left. \quad +c^2_{2}c^4_{3}\|\Del u_{Nt}\|^4_{L^2(\Om)}+c^2_{2}c^4_{3}\|\Del u_{Ntt}\|^2_{L^2(\Om)}\|\Del u_{N}\|^2_{L^2(\Om)}\right]\\[1mm]
& \leq 9c^2_{8}\cd\left\{2c^2_{2}c^4_{3}\left(\frac{4^2(c_{6}+1)^2}{\mu\gamma}\right)+c^2_{2}c^2_{3}\left(\frac{8(c_{6}+1)^2}{\lambda}\right)+c^2_{2}c^4_{3}\left(\frac{4^2(c_{6}+1)^2}{\mu^2}\right)+c^2_{2}c^4_{3}\left(\frac{4^2(c_{6}+1)^2}{\lambda\gamma}\right)\right\}\\
&=c_{16}
\end{split}
\eee 
for all $t\in (0, T_{N})$. Finally 
\bee
\|f'(u_{N})\|^2_{L^\infty(\Om)}\|\na u_{Ntt}\|^2_{L^2(\Om)}\leq c^2_{8}c^2_{4}\|\Del u_{Ntt}\|^2_{L^2(\Om)}\leq c^2_{8}c^2_{4}\left(\frac{4(c_{6}+1)}{\gamma}\right)=c_{17}, \qquad \mbox{for all } \; t\in (0, T_{N}).
\eee 
Combining the above  estimates with (\ref{Def1EE}) and recalling the definition of $c_{18}$ in (\ref{E13.14}), we arrive at 
\bee
\F'_{N}(t)\leq c_{18}, \qquad \mbox{for all }\; t\in (0, T_{N}).
\eee 
Thus, using (\ref{E13.18}), we obtain
\be{E13.30}
\F_{N}(t)\leq \F_{N}(0)+c_{18}t\leq c_{6}+c_{18}t,\qquad \mbox{for all} \; t\in (0, T_{N}).
\ee 
Now suppose that $T_{N}<T(M)$. By the continuity of $\F_{N}$, either $\tmn\leq T(M)$ or $\tmn>T(M)$ and $\F_{N}(T_{N})=c_{6}+1$. The first alternative is incompatible with (\ref{E13.30}) in light of the extensibility criterion (\ref{E6}). The second alternative is also impossible, since (\ref{E13.14}) would imply
\bee
c_{6}+1=\F_{N}(T_{N})\leq c_{6}+T_{N}c_{18}<c_{6}+c_{18}T(M)=c_{6}+\frac{1}{2},
\eee 
a contradiction. Hence $T_{N}\geq T(M)$, and (\ref{E13.16}) follows.
The estimates in (\ref{E13.2})-(\ref{E13.5}) are direct consequences of (\ref{E13.16}) combined with (\ref{E8}). Finally, to establish (\ref{E13.05}), we choose $\vp:=u_{Ntttt}$ in (\ref{E5}) and then the Cauchy-Schwarz inequality yields 
\bee
\spl
\io u^2_{Ntttt}=&-\al \io u_{Nttt}u_{Ntttt}-\bet \io u_{Ntt}u_{Ntttt}+\gamma \io \Del u_{Ntt}u_{Ntttt}+\del \io \Del u_{Nt}u_{Ntttt}+\varrho\io \Del u_{N}u_{Ntttt}\\&+\io f'(u_{N})u_{Nttt}u_{Ntttt}+\io3u_{Ntt}u_{Nt}f''(u_{N})u_{Ntttt}+\io f'''(u_{N})u^3_{Nt}u_{Ntttt}\\
& + \io f''(u_{N})u^2_{Nt}u_{Ntttt}+\io f'(u_{N})u_{Ntt}u_{Ntttt}\\[1mm]
\leq& \left[|\al|\cd \|u_{Nttt}\|_{L^2(\Om)}+\bet \|u_{Ntt}\|_{L^2(\Om)}+\gamma\|\Del u_{Ntt}\|_{L^2(\Om)}+\del \|\Del u_{Nt}\|_{L^2(\Om)}+\varrho \|\Del u_{N}\|_{L^2(\Om)}\right.
\\[1mm]
&\left. +\|f'(u_{N})\|_{L^{\infty}(\Om)}\|u_{Nttt}\|_{L^2(\Om)}
+3\|f''(u_{N})\|_{L^{\infty}(\Om)}\|u_{Ntt}\|_{L^4(\Om)}\|u_{Nt}\|_{L^4(\Om)}\right.\\[1mm]
&\left.+|\Om|^{\frac{1}{2}}\|f'''(u_{N})\|_{L^{\infty}(\Om)}\|u_{Nt}\|^3_{L^{\infty}(\Om)}+|\Om|^{\frac{1}{2}}\|f''(u_{N})\|_{L^{\infty}(\Om)}\|u_{Nt}\|^2_{L^{\infty}(\Om)}\right.\\[0.7mm]&\left.+\|f'(u_{N})\|_{L^{\infty}(\Om)}\|u_{Ntt}\|_{L^{2}(\Om)}\right]\cd \|u_{Ntttt}\|_{L^2(\Om)}
\end{split}
\eee 
for all $t\in (0,T(M))$.
Consequently, by (\ref{E13.55})-(\ref{E13.9}), and (\ref{E13.12}), we infer that
\bee
\spl
\|u_{Ntttt}\|_{L^2(\Om)}\leq&\{c_{5}|\al|+c_{8}c_{5}\}\cd\|\na u_{Nttt}\|_{L^2(\Om)}+\{c_{4}c_{5}\bet+c_{4}c_{5}c_{8}+\gamma\}\cd \|\Del u_{Ntt}\|_{L^2(\Om)}\\[1mm]
&\left.+\del  \|\Del u_{Nt}\|_{L^2(\Om)}+c_{8}c^{2}_{1}|\Om|^{\frac{1}{2}} \|\Del u_{Nt}\|^2_{L^2(\Om)}+c_{8}c^{3}_{1}|\Om|^{\frac{1}{2}}\|\Del u_{Nt}\|^3_{L^2(\Om)}+\varrho \|\Del u_{N}\|_{L^2(\Om)}\right.\\[1mm]
&+3c_{8}c^4_{2}c^4_{3}\|\Del u_{Ntt}\|^2_{L^2(\Om)}\|\Del u_{Nt}\|^2_{L^2(\Om)}
\end{split}
\eee 
for all $t\in (0, T(M))$ and $N\in \mathbb{N}.$ Therefore (\ref{E13.05}) follows directly from (\ref{E13.2})-(\ref{E13.5}).\qed
\begin{lem}\label{lem2.5}
Suppose that (\ref{IN}) holds. Then there exist $\tm\in (0, \infty] $ and at most one strong solution $u$ of (\ref{0}) in $\Om\times (0,\tm)$ satisfying
\be{E14.1}
\tm<\infty  \,\Rightarrow\, \limsup_{t\nearrow \tm}\left[\|\na u_{ttt}(\cd,t)\|_{L^2(\Om)}+\|\Del u_{tt}(\cd,t)\|_{L^2(\Om)}+\|\Del u_{t}(\cd,t)\|_{L^2(\Om)}+\|\Del u(\cd,t)\|_{L^2(\Om)}\right]=\infty.
\ee 
\end{lem}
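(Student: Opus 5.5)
The plan is to obtain a local solution as a limit of the Galerkin approximations $u_N$ from Lemma \ref{lem1.1}, using the uniform bounds of Lemma \ref{lem1.3}. I will then extend it to a maximal solution by iterating the local result, with uniqueness from Lemma \ref{lem2.4} and the $M$-dependence of the existence time $T(M)$ forcing the criterion (\ref{E14.1}).

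\textbf{Local existence.} Fix $M$ as in (\ref{E13.1}) and let $T:=T(M)$. By (\ref{E13.2})--(\ref{E13.05}) the following sequences are bounded: $(u_N),(u_{Nt}),(u_{Ntt})$ in $L^\infty((0,T);W^{2,2}(\Om)\cap W^{1,2}_0(\Om))$, $(u_{Nttt})$ in $L^\infty((0,T);W^{1,2}_0(\Om))$, and $(u_{Ntttt})$ in $L^\infty((0,T);L^2(\Om))$.
\begin{itemize}
\item Weak-$\star$ compactness gives a subsequence converging to some $u$ in these spaces.
\item The Aubin--Lions lemma (or Arzel\`a--Ascoli in time with compact embeddings) gives strong convergence $u_N\to u$, $u_{Nt}\to u_t$, $u_{Ntt}\to u_{tt}$ in $C^0([0,T];W^{1,2}_0(\Om)\cap C^0(\bom))$, since $W^{2,2}\hra C^0(\bom)$ compactly for $n\le3$. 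It also gives $u_{Nttt}\to u_{ttt}$ in $C^0([0,T];L^2(\Om))$.
\item The nonlinear right-hand side of (\ref{Def1E}) is a polynomial expression in $f^{(k)}(u_N)$, $u_{Nt}$, $u_{Ntt}$, $u_{Nttt}$. These converge uniformly, except $u_{Nttt}$, which converges in $L^2$, so the nonlinear terms converge in $L^1(\Om\times(0,T))$.
\item Passing to the limit in (\ref{E5}) with test functions in $V_{N_0}$, and then using density of $\bigcup_N V_N$ in $L^2$, yields (\ref{Def1E}) a.e.
\item The initial data are attained because $u_{kN}\to u_k$ in the relevant norms, by (\ref{E1}).
\end{itemize}
Weak limits only give the continuity in time of the highest norms in the weak sense; the continuity properties in (\ref{Def1}) follow from the strong convergences above.

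\textbf{Extension.} Define $\tm$ as the supremum of all $\wh T$ for which a strong solution exists on $(0,\wh T)$; by Lemma \ref{lem2.4} these solutions are consistent and glue to a single $u$. Suppose $\tm<\infty$ but the $\limsup$ in (\ref{E14.1}) is finite. Then for some $M'$ the quantities $\|\Del u\|_{L^2(\Om)},\|\Del u_t\|_{L^2(\Om)},\|\Del u_{tt}\|_{L^2(\Om)},\|\na u_{ttt}\|_{L^2(\Om)}$ are at most $M'$ for a.e.\ $t$ near $\tm$. Pick such a $t_0>\tm-T(M')/2$ at which the solution has the regularity (\ref{IN}); this is possible because of the $L^\infty_{loc}$ bounds and weak lower semicontinuity. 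Restart the construction from the data $(u,u_t,u_{tt},u_{ttt})(t_0)$. The problem is autonomous, so the above gives a solution on $(t_0,t_0+T(M'))$. Concatenating at $t_0$ produces a strong solution beyond $\tm$: the equation holds a.e., and the derivatives up to order three match at $t_0$, so the regularity in (\ref{Def1}) persists. This contradicts the maximality of $\tm$.

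\textbf{Main obstacle.} I expect the delicate point to be making the restart rigorous. The solution is only $L^\infty_{loc}$ in the top norms, so I must select $t_0$ where the data genuinely lie in $W^{2,2}\cap W^{1,2}_0$ (respectively $W^{1,2}_0$) with norm at most $M'$. I will handle this via weak continuity: $u\in C^0([0,\tm);W^{1,2}_0)$ together with boundedness in $W^{2,2}$ implies $u(t)\in W^{2,2}$ for every $t$, with the bound inherited by weak lower semicontinuity, and similarly for $u_t$, $u_{tt}$ and $u_{ttt}$. I also need to check that the glued function satisfies the $C^0$ regularity across $t_0$.
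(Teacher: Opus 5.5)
Your proposal is correct and follows essentially the same route as the paper: Galerkin limits via the uniform bounds of Lemma \ref{lem1.3} and Aubin--Lions compactness, passage to the limit with test functions in $V_{N_0}$ followed by density, uniqueness from Lemma \ref{lem2.4}, and a continuation argument resting on the fact that $T(M)$ depends on the data only through (\ref{E13.1}). Your more careful restart, where every $t_0$ inherits the $W^{2,2}$ and $W^{1,2}_0$ bounds by weak lower semicontinuity, just fills in what the paper calls a standard continuation argument; note only that (\ref{E13.1}) bounds the sum of the four norms, so you should restart with $T(4M')$ rather than $T(M')$.
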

\proof
For a fixed $M>0$ and for initial data $(u_{0}, u_{1}, u_{2}, u_{3})$ satisfying (\ref{IN}) and (\ref{E13.1}), Lemma~\ref{lem1.3} with the corresponding $T(M)$ and $C(M)$ ensures that $\tmn\geq T(M)$ for every $N\in \N$ and that (\ref{E13.2})-(\ref{E13.05}) remain valid. In particular:
\bas
(u_{N})_{N\in \N} & \mbox{ is bounded in }& L^{\infty}((0,T(M)); W^{2,2}(\Om)\cap W_{0}^{1,2}(\Om)),\\
(u_{Nt})_{N\in \N}& \mbox{ is bounded in }& L^{\infty}((0,T(M)); W^{2,2}(\Om)\cap W_{0}^{1,2}(\Om)),\\
(u_{Ntt})_{N\in \N}& \mbox{ is bounded in }& L^{\infty}((0,T(M)); W^{2,2}(\Om)\cap W_{0}^{1,2}(\Om)),\\
(u_{Nttt})_{N\in \N}& \mbox{ is bounded in }& L^{\infty}((0,T(M)); W_{0}^{1,2}(\Om)),\\
(u_{Ntttt})_{N\in \N} &\mbox{ is bounded in } & L^{\infty}((0,T(M)); L^2(\Om)).
\eas
Consequently, by virtue of the Aubin-Lions lemma \cite{Simon-1996} and the compact embeddings
\bee
W^{2,2}(\Om)\hra  W^{1,2}(\Om),\qquad  W^{2,2}(\Om)\hra C^{0}(\bom), \qquad  W^{1,2}(\Om)\hra  L^2(\Om),
\eee 
we additionally obtain
\bas
(u_{N})_{N\in \N} &\mbox{ is relatively compact in }& C^0([0,T(M)]; W_{0}^{1,2}(\Om)\cap C^{0}(\bom)),\\
(u_{Nt})_{N\in \N} & \mbox{ is relatively compact in } & C^0([0,T(M)]; W_{0}^{1,2}(\Om)\cap C^{0}(\bom)),\\
(u_{Ntt})_{N\in \N}&\mbox{ is relatively compact in }& C^0([0,T(M)]; W_{0}^{1,2}(\Om)\cap C^{0}(\bom)),\\
(u_{Nttt})_{N\in \N}& \mbox{ is relatively compact in }& C^0([0,T(M)]; L^2(\Om)).
\eas
A subsequence extraction argument yields $(N_{j})_{j\in \N}\subset \N$  with $N_{j}\to \infty$ as $j\to \infty$ and a function
\bee
u\in C^0([0,T(M)]; W_{0}^{1,2}(\Om)\cap C^{0}(\bom))\cap L^{\infty}((0,T(M)); W^{2,2}(\Om))
\eee 
fulfilling
\bee
u_{t}\in C^0([0,T(M)]; W_{0}^{1,2}(\Om)\cap C^{0}(\bom))\cap L^{\infty}((0,T(M)); W^{2,2}(\Om)),
\eee 
\bee
u_{tt}\in C^0([0,T(M)]; W_{0}^{1,2}(\Om)\cap C^{0}(\bom))\cap L^{\infty}((0,T(M)); W^{2,2}(\Om)),
\eee 
\bee
u_{ttt}\in C^0([0,T(M)]; L^2(\Om))\cap L^{\infty}((0,T(M)); W_{0}^{1,2}(\Om)),
\eee 
as well as
\bee
u_{tttt}\in  L^{\infty}((0,T(M)); L^2(\Om)).
\eee 
Moreover, when $N=N_{j}\to \infty$ as $j\to \infty$, there hold
\bea{}
u_{N}\rightarrow u &  \mbox{ in }& C^0([0,T(M)]; W_{0}^{1,2}(\Om)\cap C^{0}(\bom)),\label{1.503}\\
\Del u_{N}\wsto\Del u &  \mbox{ in }& L^{\infty}((0,T(M)); L^2(\Om)),\label{1.53}\\
u_{Nt}\rightarrow u_{t} &  \mbox{ in }& C^0([0,T(M)]; W_{0}^{1,2}(\Om)\cap C^{0}(\bom))\label{1.54},\label{E1.55}\\
\Del u_{Nt}\wsto \Del u_{t} &  \mbox{ in }& L^{\infty}((0,T(M)); L^2(\Om)),\label{1.55}\\
u_{Ntt}\rightarrow u_{tt} &  \mbox{ in }& C^0([0,T(M)]; W_{0}^{1,2}(\Om)\cap C^{0}(\bom)),\label{1.56}\\
\Del u_{Ntt}\wsto \Del u_{tt} &  \mbox{ in }& L^{\infty}((0,T(M)); L^2(\Om)),\label{1.57}\\
u_{Nttt}\rightarrow u_{ttt} &  \mbox{ in }& C^{0}([0,T(M)]; L^2(\Om)),\label{1.58}\\
u_{Ntttt}\wsto u_{tttt} &  \mbox{ in }& L^{\infty}((0,T(M)); L^2(\Om)).\label{1.59}\\ \nn 
\eea 
To confirm that $u$ possesses the claimed strong solution properties, observe that (\ref{E1}), together with  (\ref{1.503}), (\ref{E1.55}), (\ref{1.56}) and (\ref{1.58}) imply  $u(\cd,0)=\lim_{j\to \infty} u_{N_{j}}(\cd,0)=\lim_{j\to \infty} u_{0N_{j}}=u_{0}$ a.e.\ in $\Om$. Analogously, we obtain $u_{t}(\cd, 0)=u_{1}$, $u_{tt}(\cd,0)=u_{2}$ and $u_{ttt}(\cd,0)=u_{3}$ a.e.\ in $\Om$. Moreover, for any $N_{0}\in \N$ and $\vp \in L^2((0,T(M)); V_{N_{0}})$, it follows from (\ref{E5}) that 
\be{E1.602}
\spl
&\int_{0}^{T(M)}\io u_{Ntttt}\vp+\al \int_{0}^{T(M)}\io u_{Nttt}\vp+\bet \int_{0}^{T(M)}\io u_{Ntt}\vp-\gamma \int_{0}^{T(M)}\io \Del u_{Ntt}\vp\\[1mm]
=\,&\del\int_{0}^{T(M)}\io \Del u_{Nt}\vp+\varrho \int_{0}^{T(M)}\io \Del u_{N}\vp+\int_{0}^{T(M)}\io u_{Nttt}f'(u_{N})\vp\\[1mm]
&+3\int_{0}^{T(M)}\io u_{Ntt}u_{Nt}f''(u_{N})\vp+\int_{0}^{T(M)}\io u^3_{Nt}f'''(u_{N})\vp\\[1mm]&+\int_{0}^{T(M)}\io u^2_{Nt}f''(u_{N})\vp+\int_{0}^{T(M)}\io u_{Ntt}f'(u_{N})\vp
\end{split}
\ee 
for every $N\geq N_{0}$. Since (\ref{1.503}) and the continuity of $f'$, $f''$, $f'''$ guarantee that
\bee
f'(u_{N})  \rightarrow f'(u),\quad\,\, f''(u_{N})\rightarrow f''(u) \quad \mbox{ and }\quad f'''(u_{N})\rightarrow f'''(u)\;\,\,\mbox{ in }\;C^0(\bom\times [0,T(M)])
\eee 
as $N=N_{j}\to \infty$. Moreover (\ref{E1.55}), (\ref{1.56}) and (\ref{1.58}) imply that
\bee
u_{Nttt}f'(u_{N})\rightarrow u_{ttt}f'(u)\quad\,\, \mbox{ and }\quad\,\, u_{Ntt}u_{Nt}f''(u_{N})\rightarrow u_{tt}u_{t}f''(u)\quad \mbox{ in }\; L^1(\Om\times (0,T(M))),
\eee  
\bee
u^3_{Nt}f'''(u_{N})\rightarrow u^3_{t}f'''(u)\quad\,\, \mbox{ and }\quad\,\, u^2_{Nt}f''(u_{N})\rightarrow u^2_{t}f''(u)\quad \mbox{ in }\; L^1(\Om\times (0,T(M))),
\eee  
as well as 
\bee
u_{Ntt}f'(u_{N})\rightarrow u_{tt}f'(u)\quad \mbox{ in }\;\; L^1(\Om\times (0,T(M)))
\eee  
as $N=N_{j}\to \infty$. Combining these convergences with (\ref{E1.602}) and (\ref{1.53})-(\ref{1.59}) we find 
\bee{}
\spl
&\int_{0}^{T(M)}\io u_{tttt}\vp+\al \int_{0}^{T(M)}\io u_{ttt}\vp+\bet \int_{0}^{T(M)}\io u_{tt}\vp+\gamma \int_{0}^{T(M)}\io \Del u_{tt}\vp\\[1mm]
=\,&\,\del\int_{0}^{T(M)}\io \Del u_{t}\vp+\varrho \int_{0}^{T(M)}\io \Del u\vp+\int_{0}^{T(M)}\io u_{ttt}f'(u)\vp\\[1mm]
&+3\int_{0}^{T(M)}\io u_{tt}u_{t}f''(u)\vp+\int_{0}^{T(M)}\io u^3_{t}f'''(u)\vp+\int_{0}^{T(M)}\io u^2_{t}f''(u)\vp+\int_{0}^{T(M)}\io u_{tt}f'(u)\vp.
\end{split}
\eee 
By virtue of the density of $\bigcup_{N_{0}\in \N} V_{N_{0}}$  in $L^2(\Om)$, this identity holds for every $\vp \in L^2(\Om \times (0,T(M)))$. Hence (\ref{Def1E}) is satisfied. Uniqueness follows from Lemma \ref{lem2.4}, while the extensibility criterion (\ref{E14.1}) follows from  a standard continuation argument, because the chosen  $T(M)$ depends on the initial data $(u_{0}, u_{1}, u_{2}, u_{3})$ solely through the quantities featured in (\ref{E13.1}). 
\qed
%
%
\section{Ruling out the gradient blow-up:\ Proof of Theorem \ref{Th1}}\label{RO}
In this section, we turn to the main argument underlying the proof of Theorem \ref{Th1}. Due to Lemma~\ref{lem2.5}, our objective is to establish the sharpened extensibility criterion  (\ref{BU}). To this end, we derive suitable bounds for the quantities  
$$  \io |\na u_{ttt} |^2,\qquad \io |\Del u_{tt}|^2,\qquad \io |\Del u_{t}|^2\qquad\mbox{ and }\qquad \io |\Del u|^2 $$ 
whenever $\tm<\infty$ and 
$$ \sup_{t\in (0,\tm)}\|u(\cd,t)\|_{L^{\infty}(\Om)}<\infty.$$ 

The key step in the proof of Lemma \ref{lem1.9} is to test the lifted equation  (\ref{E15.07}) in a carefully chosen manner:\ this allows us to derive the required estimate while overcoming the potential difficulty caused by the lack of regularity. We emphasize that the full set of regularity assumptions in  (\ref{IN0}) will be used throughout this section. 
\begin{lem}\label{lem1.9}
In addition to (\ref{IN}) suppose that 
\be{IN2}
u_{0},u_1\in W^{4,2}(\Om)\qquad  \mbox{and}\qquad u_{3}\in W^{2,2}(\Om).
\ee
Assume that the strong solution of (\ref{0}) exists on $(0,\tm)$ with  $\tm<\infty$ and that, for some $K>0$,
\be{E16.1}
\|u(\cd,t)\|_{L^{\infty}(\Om)}\leq K \qquad \mbox{for all }\;t\in (0, \tm).
\ee 
Then there exists $C_{1}>0$ such that 
\be{E16.2}
\io |\Del u(\cd,t)|^2\leq C_{1} \qquad \mbox{for a.e. } t\in (0,\tm).
\ee 
\end{lem}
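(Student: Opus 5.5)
The plan is to work with the thrice-integrated identity \eqref{E15.07} rather than with \eqref{Def1E}. In \eqref{E15.07} the highest-order spatial operator acts only on $v,w,z$, and the nonlinearity enters only through $f(u)+F(u)$. Writing $u=v_t$, the identity reads
\[
v_{tt}+\al v_t+\bet v-\gamma\Del v-\del\Del w-\varrho\Del z=f(u)+F(u)+\tfrac{t^2}{2}\psi_1+t\psi_2+\psi_3 ,
\]
which is a wave-type equation for $v$. The energy at the level $\|\Del v_t\|_{L^2(\Om)}=\|\Del u\|_{L^2(\Om)}$ is obtained formally by testing with $\Del^2 u$. This produces
\[
\frac12\frac{d}{dt}\io|\Del u|^2+\frac{\gamma}{2}\frac{d}{dt}\io|\na\Del v|^2
\]
together with coupling terms $\del\io\na\Del w\cd\na\Del u$ and $\varrho\io\na\Del z\cd\na\Del u$. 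These I would treat exactly as in the uniqueness proof (Lemma \ref{lem2.4}). That is, I would add $\del\io\na\Del v\cd\na\Del w+\varrho\io\na\Del v\cd\na\Del z+\mu\io|\na\Del w|^2+\lambda\io|\na\Del z|^2$, with $\mu,\lambda$ as in \eqref{E07}. This yields a functional $\mathcal G$ that is equivalent to $\|\Del u\|^2_{L^2(\Om)}+\|\na\Del v\|^2+\|\na\Del w\|^2+\|\na\Del z\|^2$. Its derivative is controlled by $C\mathcal G$ plus the source contributions, together with the lower-order terms $\al u$, $\bet v$.

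Since $\Del u$ is only in $L^2(\Om)$, this test is not admissible. I would therefore use Lemma \ref{lem1.8}. Apply $(1+\eps A)^{-1}$ to \eqref{E15.07}, note that it commutes with $\Del$ by \eqref{P1}, and test with $\Del^2$ of the regularized quantity. Equivalently, set $u_\eps:=(1+\eps A)^{-1}\Del u$, $v_\eps:=(1+\eps A)^{-1}\Del v$, and so on, and test the $\Del$-applied, regularized equation with $-\Del u_\eps$. This makes every integration by parts legitimate, and the time-differentiations are justified by the regularity \eqref{Def1}. The functional $\mathcal G_\eps$ built from $u_\eps,v_\eps,w_\eps,z_\eps$ satisfies a differential inequality with constants independent of $\eps$, by \eqref{P2}--\eqref{P3}. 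In the source terms $\psi_i$ the data enter through $\Del\psi_i$ (or $\na\Del\psi_i$, once $\eps$ is removed). The $W^{4,2}$ assumption on $u_0,u_1$ and the $W^{2,2}$ assumption on $u_3$ in \eqref{IN2} guarantee that $\Del\psi_1,\Del\psi_2,\Del\psi_3\in L^2(\Om)$. Hence I would keep one derivative on the test function and bound these terms by $\|\Del\psi_i\|_{L^2(\Om)}\|\Del u_\eps\|_{L^2(\Om)}$, using $t\le\tm<\infty$.

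The main obstacle is the nonlinear source. After moving one Laplacian, the term becomes $\io\Del\big(f(u)+F(u)\big)\,(1+\eps A)^{-1}\Del u$, with
\[
\Del f(u)=f'(u)\Del u+f''(u)|\na u|^2 ,
\]
and $\Del F(u)$ is the time integral of the same expression. By \eqref{E16.1}, $f',f''$ are bounded on $[-K,K]$, so the first piece is at most $C\|\Del u\|^2_{L^2(\Om)}$. For the quadratic gradient piece I would use the Gagliardo--Nirenberg inequality
\[
\|\na u\|^2_{L^4(\Om)}\le C\|u\|_{L^\infty(\Om)}\|\Del u\|_{L^2(\Om)}\qquad\text{for } u\in W^{2,2}(\Om)\cap W^{1,2}_0(\Om).
\]
Together with \eqref{E16.1} this gives $\big\||\na u|^2\big\|_{L^2(\Om)}\le CK\|\Del u\|_{L^2(\Om)}$, so this piece is also quadratic in $\|\Del u\|_{L^2(\Om)}$ rather than superquadratic. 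The $F$ contribution is bounded in the same way by $C\|\Del u(t)\|_{L^2(\Om)}\int_0^t\|\Del u(s)\|_{L^2(\Om)}\,ds$.

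Letting $\eps\searrow0$ via \eqref{P4} and weak lower semicontinuity gives an inequality of the form
\[
y(t)\le C+C\int_0^t y(s)\,ds+C\int_0^t\!\!\int_0^s y\,d\sigma ds, \qquad y:=\|\Del u\|^2_{L^2(\Om)}+\dots,
\]
on $(0,\tm)$. The quantity $y$ also contains $\|\na\Del v\|^2,\|\na\Del w\|^2,\|\na\Del z\|^2$ as they appear in $\mathcal G$. The lower-order terms $\al u,\bet v$ are absorbed via Poincaré's inequality (\eqref{E13.8}, \eqref{E13.9}) and $\|\na\Del v\|\ge c\|\Del v\|$. Gronwall's lemma on the finite interval $(0,\tm)$ then yields \eqref{E16.2}. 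A point to check is that the limit passage is valid for a.e. $t$, which is why the statement reads ``a.e.''. If the $\eps$-uniformity of the gradient term fails, I would instead estimate it before regularizing, testing against $(1+\eps A)^{-1}$ applied to the already integrated-by-parts form.
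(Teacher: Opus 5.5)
Your outline follows the paper's proof almost step for step: the regularized functional as in \eqref{E16.3}, testing \eqref{E15.07}, Gagliardo--Nirenberg for $|\na u|^2$, and Gronwall after $\eps\searrow0$. However, it contains a genuine gap, and the paper's step \eqref{E16.7} has the same gap. You bound the data terms by $\|\Del\psi_i\|_{L^2(\Om)}$ using the commutation \eqref{P1}, $\Del(1+\eps A)^{-1}\psi_i=(1+\eps A)^{-1}\Del\psi_i$, which requires $\psi_i\in D(A)=W^{2,2}(\Om)\cap W^{1,2}_0(\Om)$. This holds for $\psi_3$ because $f(0)=0$. By \eqref{E15.2}--\eqref{E15.3}, however, the boundary traces of $\psi_1$ and $\psi_2$ are $-\gamma\Del u_1-\del\Del u_0$ and $-\gamma\Del u_0$, and nothing in \eqref{IN0} or \eqref{IN2} makes them vanish (e.g.\ $u_0=1-|x|^2$ on the unit ball). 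For $\vp\in W^{2,2}(\Om)\setminus W^{1,2}_0(\Om)$, closedness of $A$ gives $\|\Del(1+\eps A)^{-1}\vp\|_{L^2(\Om)}\to\infty$ as $\eps\searrow0$, so estimating the $\psi_1,\psi_2$ terms by $\|\Del\psi_i\|_{L^2(\Om)}$ is unjustified.

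This is not a removable technicality. By \eqref{E15.07}, $\Phi(t):=\gamma\Del v+\del\Del w+\varrho\Del z$ lies in $W^{1,2}(\Om)$ with trace $t\gamma\Del u_0+\frac{t^2}{2}(\gamma\Del u_1+\del\Del u_0)$ on $\pa\Om$. So if $\Del u_0\not\equiv0$ on $\pa\Om$, then $\Phi(t)\notin W^{1,2}_0(\Om)=D(A^{1/2})$ for small $t>0$, and $\|\na(1+\eps A)^{-1}\Phi(t)\|^2_{L^2(\Om)}=\sum_k\lambda_k(1+\eps\lambda_k)^{-2}\big(\io\Phi(t)e_k\big)^2\to\infty$. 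Your $\mathcal{G}_\eps$, like $\F^{(1)}_\eps$ (cf.\ \eqref{E16.5}), dominates a multiple of $\|\na(1+\eps A)^{-1}\Del v\|^2+\|\na(1+\eps A)^{-1}\Del w\|^2+\|\na(1+\eps A)^{-1}\Del z\|^2$. Hence it tends to $+\infty$ as $\eps\searrow0$, and no $\eps$-uniform bound of the kind you (and \eqref{E16.20r}) assert can hold.

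To close the gap you need an additional ingredient. The simplest is to assume the compatibility conditions $\Del u_0=\Del u_1=0$ on $\pa\Om$; then $\psi_1,\psi_2\in D(A)$ and your argument runs verbatim. Without them, split $u=U+\tilde u$, where $U$ solves the linear problem ($f\equiv0$) with the same data. For $f\equiv0$ the Galerkin inequality \eqref{Def1EE} reduces to $\F_N'\le C\F_N$, so $\|\Del U(\cd,t)\|_{L^2(\Om)}$ stays bounded on $(0,\tm)$. The remainder $\tilde u$ satisfies the analogue of \eqref{E15.07} with zero initial values and source $f(u)+F(u)$. In it, $\psi_1,\psi_2,\psi_3$ are replaced by their $f$-parts $-f''(u_0)u_1^2-f'(u_0)u_2-f'(u_0)u_1$, $-f'(u_0)u_1-f(u_0)$ and $-f(u_0)$, all of which belong to $D(A)$. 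For $\tilde u$ the regularized functional is compatible with the boundary conditions. Your Gagliardo--Nirenberg treatment of $\Del f(u)$ and $\Del F(u)$, now with $\|\Del u\|\le\|\Del U\|+\|\Del\tilde u\|$, then closes the Gronwall argument for $\|\Del\tilde u\|_{L^2(\Om)}$.

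Also fix a notational slip. With $u_\eps=(1+\eps A)^{-1}\Del u$ you pair the $\Del$-applied equation with $u_\eps$, not with $-\Del u_\eps$. The bounds then involve $\|u_\eps\|_{L^2(\Om)}$, not $\|\Del u_\eps\|_{L^2(\Om)}$, which is not $\eps$-uniform.
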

\proof
Let $A$ be defined as in Lemma \ref{lem1.8}. For $\eps>0$, we consider 
\be{E16.3}
\spl
\F^{(1)}_{\eps}(t)&:=\frac{1}{2}\io|(1+\eps A)^{-1}\Del u(\cd,t)|^2+\frac{\gamma}{2}\io |\na \left\{(1+\eps A)^{-1}\Del v(\cd,t)\right\}|^2\\
&\quad+\del \io \na \left\{(1+\eps A)^{-1}\Del v(\cd,t)\right\}\cd \na \left\{(1+\eps A)^{-1}\Del w(\cd,t)\right\}\\&\quad+
\varrho \io \na \left\{(1+\eps A)^{-1}\Del v(\cd,t)\right\}\cd \na \left\{(1+\eps A)^{-1}\Del z(\cd,t)\right\}\\&\quad+
\mu \io |\na \left\{(1+\eps A)^{-1}\Del w(\cd,t)\right\}|^2
+\lam \io |\na \left\{(1+\eps A)^{-1}\Del z(\cd,t)\right\}|^2
\end{split}
\ee 
for $t\in [0,\tm)$, where $v,w,z$ are as in (\ref{E15.1}) while $\mu$ and $\lam$ are as in (\ref{E07}). Observing that
\bee
(1+\eps A)^{-1}\Del u =-\frac{1}{\eps}u +\frac{1}{\eps}(1+\eps A)^{-1}u,
\eee 
and since $(1+\eps A)^{-1}$ acts as a bounded operator on $L^2(\Om)$, the regularity $u\in C^0([0,\tm); L^2(\Om))$ and $u_{t}\in  C^0([0,\tm); L^2(\Om))$ in (\ref{Def1}) guarantees that 
\bee
[0, \tm)\ni t\mapsto \frac{1}{2}\io |(1+\eps A)^{-1}\Del u(\cd,t)|^2
\eee 
is continuously differentiable with 
\bee
\spl
\frac{1}{2}\frac{d}{dt}\io  |(1+\eps A)^{-1}\Del u(\cd,t)|^2&=\io (1+\eps A)^{-1}\Del u\cd (1+\eps A)^{-1}\Del u_{t}\\
&= \io (1+\eps A)^{-1}\Del u\cd \Del \left\{(1+\eps A)^{-1}u_{t}\right\}
\end{split}
\eee 
due to (\ref{P1}). Moreover
\be{GD}
\na \left\{(1+\eps A)^{-1}\Del v\right\}=-\frac{1}{\eps}\int_{0}^{t}\na u+\frac{1}{\eps}\na(1+\eps A)^{-1}\int_{0}^{t} u\in C^{1}([0,\tm); L^2(\Om; \R^n)).
\ee
Indeed, by (\ref{Def1}),  we have $\na \left\{(1+\eps A)^{-1}\Del v\right\}\in W_{loc}^{2, \infty}([0,\tm);L^2(\Om;\R^n))$ and, since 
$$ W_{loc}^{2, \infty}([0,\tm);L^2(\Om;\R^n))\hookrightarrow C^1([0,\tm); L^2(\Om;\R^n)),$$ the claimed regularity in (\ref{GD}) follows. The same argument applies to  $\na\left\{(1+\eps A)^{-1}\Del w\right\}$ and  $\na\left\{(1+\eps A)^{-1}\Del z\right\}$, so that all the maps appearing in $\F^{(1)}_{\eps}$ belong to $C^1([0, \tm))$. Thus, integrating by parts, we infer that for all $t\in (0,\tm)$ 
\bee 
\spl
\frac{\gamma}{2}\frac{d}{dt}\io|\na \{(1+\eps A)^{-1}\Del v\}|^2&=\gamma \io \na \left\{(1+\eps A)^{-1}\Del v\right\}\cd \na \left\{(1+\eps A)^{-1}\Del u\right\}\\
&=-\gamma\io (1+\eps A)^{-1}\Del u \cd \Del \left\{(1+\eps A)^{-1}\Del v\right\}
\end{split}
\eee 
and
\bee
\spl
&\del\,\frac{d}{dt}\io \na \left\{(1+\eps A)^{-1}\Del v\right\}\cd\na \left\{(1+\eps A)^{-1}\Del w\right\}\\
=\,& \del \io \na \left\{(1+\eps A)^{-1}\Del u\right\}\cd\na \left\{(1+\eps A)^{-1}\Del w\right\}+\del \io |\na \left\{(1+\eps A)^{-1}\Del v\right\}|^2\\
=& -\del \io (1+\eps A)^{-1}\Del u\cd\Del \left\{(1+\eps A)^{-1}\Del w\right\}+\del \io |\na \left\{(1+\eps A)^{-1}\Del v\right\}|^2,
\end{split}
\eee 
as well as 
\bee
\spl
&\,\varrho\frac{d}{dt}\io \na \left\{(1+\eps A)^{-1}\Del v\right\}\cd\na \left\{(1+\eps A)^{-1}\Del z\right\}\\
=\,& \varrho \io \na \left\{(1+\eps A)^{-1}\Del u\right\}\cd\na \left\{(1+\eps A)^{-1}\Del z\right\}+\varrho \io \na \left\{(1+\eps A)^{-1}\Del v\right\}\cd\na  \left\{(1+\eps A)^{-1}\Del w\right\}\\
=& -\varrho \io (1+\eps A)^{-1}\Del u\cd\Del \left\{(1+\eps A)^{-1}\Del z\right\}+\varrho \io \na \left\{(1+\eps A)^{-1}\Del v\right\}\cd\na  \left\{(1+\eps A)^{-1}\Del w\right\}.
\end{split}
\eee
By the same token 
\bee
\mu\, \frac{d}{dt}\io |\na \left\{(1+\eps A)^{-1}\Del w\right\}|^2=2\mu \io \na \left\{(1+\eps A)^{-1}\Del v\right\}\cd  \na \left\{(1+\eps A)^{-1}\Del w\right\}
\eee 
and 
\bee
\lambda \,\frac{d}{dt}\io |\na \left\{(1+\eps A)^{-1}\Del z\right\}|^2=2\lambda \io \na \left\{(1+\eps A)^{-1}\Del w\right\}\cd  \na \left\{(1+\eps A)^{-1}\Del z\right\}.
\eee  
Combining the identities above with (\ref{E15.07}), we obtain 
\be{E16.4}
\spl
\frac{d}{dt}\F^{(1)}_{\eps}(t)=&\io (1+\eps A)^{-1}\Del u\cd \Del\left\{(1+\eps A)^{-1}\left\{ u_{t}-\gamma \Del v-\del \Del w-\varrho\Del z\right\}\right\}\\
&+ \del \io |\na \left\{(1+\eps A)^{-1}\Del v\right\}|^2+ \varrho \io \na \left\{(1+\eps A)^{-1}\Del v\right\}\cd\na  \left\{(1+\eps A)^{-1}\Del w\right\}\\
&+2\mu \io \na \left\{(1+\eps A)^{-1}\Del v\right\}\cd  \na \left\{(1+\eps A)^{-1}\Del w\right\}\\
&+ 2\lambda \io \na \left\{(1+\eps A)^{-1}\Del w\right\}\cd  \na \left\{(1+\eps A)^{-1}\Del z\right\}\\
=&-\al \io |(1+\eps A)^{-1}\Del u|^2-\frac{\bet}{2} \frac{d}{dt}\io |(1+\eps A)^{-1}\Del v|^2\\
&+\io (1+\eps A)^{-1}\Del u\cd \Del (1+\eps A)^{-1}\left\{f(u)+F(u)+\frac{t^2}{2}\psi_{1}+t\psi_{2}+\psi_{3}\right\}\\
&+ \del \io |\na \left\{(1+\eps A)^{-1}\Del v\right\}|^2+ \varrho \io \na \left\{(1+\eps A)^{-1}\Del v\right\}\cd\na  \left\{(1+\eps A)^{-1}\Del w\right\}\\
&+2\mu \io \na \left\{(1+\eps A)^{-1}\Del v\right\}\cd  \na \left\{(1+\eps A)^{-1}\Del w\right\}\\
&+ 2\lambda \io \na \left\{(1+\eps A)^{-1}\Del w\right\}\cd  \na \left\{(1+\eps A)^{-1}\Del z\right\}
\end{split}
\ee 
for all $t\in (0,\tm)$.
The choice of $\mu$ and $\lambda$ in Lemma \ref{lem1.2} implies, via  Young's inequality, that
\be{E16.5}
\spl
\F^{(1)}_{\eps}(t)\geq& \,\frac{1}{2}\io |(1+\eps A)^{-1}\Del u|^2+\frac{\gamma}{4}\io |\na \left\{(1+\eps A)^{-1}\Del v\right\}|^2+\frac{\mu}{4}\io |\na \left\{(1+\eps A)^{-1}\Del w\right\}|^2\\[1mm]
&+ \frac{\lambda}{4}\io |\na \left\{(1+\eps A)^{-1}\Del z\right\}|^2
\end{split} 
\ee 
for all $t\in (0, \tm)$. Moreover, using Young's inequality once more, we infer that for all $t\in (0,\tm)$
\be{E16.6}
\spl
&-\al \io |(1+\eps A)^{-1}\Del u|^2+ \del \io |\na \left\{(1+\eps A)^{-1}\Del v\right\}|^2\\
&+ \varrho \io \na \left\{(1+\eps A)^{-1}\Del v\right\}\cd\na  \left\{(1+\eps A)^{-1}\Del w\right\}\\
&+2\mu \io \na \left\{(1+\eps A)^{-1}\Del v\right\}\cd  \na \left\{(1+\eps A)^{-1}\Del w\right\}\\
&+ 2\lambda \io \na \left\{(1+\eps A)^{-1}\Del w\right\}\cd  \na \left\{(1+\eps A)^{-1}\Del z\right\}\\
\leq &\,|\alpha|\io |(1+\eps A)^{-1}\Del u|^2+(\del +\mu+\varrho)\io |\na \left\{(1+\eps A)^{-1}\Del v\right\}|^2\\
&+(\varrho+\mu+\lambda)\io |\na \left\{(1+\eps A)^{-1}\Del w\right\}|^2
+\lambda\io |\na \left\{(1+\eps A)^{-1}\Del z\right\}|^2\\[1.5mm]
\leq &\, c_{1}\F^{(1)}_{\eps}(t),
\end{split}
\ee
where $c_{1}:=2|\al|+\frac{4}{\gamma}(\del +\mu+\varrho)+\frac{4}{\mu}(\varrho+\mu+\lambda)+4.$ Furthermore, due to
(\ref{P1})-(\ref{P2}) and the Young's inequality,  
\be{E16.7}
\spl
&\io (1+\eps A)^{-1}\Del u\cd \Del (1+\eps A)^{-1}\left\{\frac{t^2}{2}\psi_{1}+t\psi_{2}+\psi_{3}\right\}\\[1mm]
=& \frac{t^2}{2}\io (1+\eps A)^{-1}\Del u\cd (1+\eps A)^{-1}\Del \psi_{1}+t \io (1+\eps A)^{-1}\Del u\cd (1+\eps A)^{-1}\Del \psi_{2} \\
&+\io (1+\eps A)^{-1}\Del u\cd (1+\eps A)^{-1}\Del\psi_{3}\\
\leq &\,3\io |(1+\eps A)^{-1}\Del u|^2+\frac{t^4}{4}\io |(1+\eps A)^{-1}\Del \psi_{1} |^2\\
&+t^2\io |(1+\eps A)^{-1}\Del \psi_{2} |^2+\io |(1+\eps A)^{-1}\Del \psi_{3} |^2\\[1mm]
\leq &\,6\,\F^{(1)}_{\eps }(t)+\frac{t^4}{4}\io |\Del \psi_{1} |^2+t^2\io |\Del \psi_{2} |^2+\io |\Del \psi_{3} |^2\\[1mm]
\leq &\, 6\,\F^{(1)}_{\eps }(t)+c_{2}
\end{split} 
\ee 
for all $t\in (0, \tm)$ where, thanks to (\ref{IN2}), $f\in C^4(\R)$ and the assumption $\tm <\infty$
\bee
c_{2}:=\frac{\tm^4}{4}\io |\Del \psi_{1} |^2+\tm^2\io |\Del \psi_{2} |^2+\io |\Del \psi_{3} |^2<\infty.
\eee 
To  estimate the  terms involving $f(u)$ and $F(u)$ in (\ref{E16.4}), we  proceed similarly:
\be{E16.20}
\spl
&\quad\io (1+\eps A)^{-1}\Del u\cd \Del \left\{(1+\eps A)^{-1}f(u)\right\}+\io (1+\eps A)^{-1}\Del u\cd \Del \left\{(1+\eps A)^{-1}F(u)\right\}\\[1mm]
&=\io (1+\eps A)^{-1}\Del u\cd \left\{(1+\eps A)^{-1}\Del f(u)\right\}+\io (1+\eps A)^{-1}\Del u\cd \left\{(1+\eps A)^{-1}\Del F(u)\right\}\\[1mm]
&\leq \, 2\io |(1+\eps A)^{-1}\Del u|^2+\io |\Del f(u)|^2+\io |\Del F(u)|^2\\
&\leq 4\,\F^{(1)}_{\eps}(t)+ \io |f'(u) \Del u+|\na u|^2f''(u)|^2+\io \left|\int_{0}^{t}f'(u) \Del u+|\na u|^2f''(u)\right|^2\\
&\leq 4\,\F^{(1)}_{\eps}(t)+2\io [f'(u)]^2|\Del u|^2 +2\io [f''(u)]^2|\na u|^4+2t\int_{0}^{t}\io [f'(u)]^2|\Del u|^2+2t \int_{0}^{t}\io [f''(u)]^2|\na u|^4
\end{split} 
\ee
for all $t\in (0,\tm) $. At this point we invoke the Gagliardo-Nirenberg inequality which, together with elliptic regularity theory, gives $c_{3} >0$ such that 
\be{GN}
\io |\na \vp |^4\leq c_{3}\|\Del \vp \|^2_{L^2(\Om)}\| \vp \|^2_{L^\infty(\Om)} \qquad\, \mbox{for all}\;\;\vp \in W^{2,2}(\Om)\cap W_{0}^{1,2}(\Om).
\ee
By (\ref{E16.1}) and the regularity of $f$ in (\ref{f}), there exist $c_{4},c_{5}>0$ such that 
$$[f'(u)]^2\leq c_{4}\qquad \text{and}\qquad [f''(u)]^2\leq c_{5} \quad\,\, \text{ in }\,\, \Om \times (0, \tm),$$ 
whence
\bee
\spl
2\io [f'(u)]^2|\Del u|^2 +2\io [f''(u)]^2|\na u|^4
\leq& c_{4}\io |\Del u|^2+2c_{5}\io |\na u|^4\\
\leq &c_{4}\io |\Del u|^2+2c_{5}c_{3}\|u\|^2_{L^{\infty}(\Om)}\io |\Del u|^2\\
\leq& c_{6}\io |\Del u|^2
\end{split}
\eee 
having set $c_{6}:=c_{4}+2c_{5}c_{3}K^2.$ Similarly,
\bee
\spl
2t\int_{0}^{t}\io [f'(u)]^2|\Del u|^2 +2t\int_{0}^{t}\io [f''(u)]^2|\na u|^4
\leq& c_{4}\tm \int_{0}^{t}\io |\Del u|^2+2c_{5}\tm\int_{0}^{t}\io |\na u|^4\\
\leq &c_{4}\tm \int_{0}^{t}\io |\Del u|^2+2c_{5}c_{3}\tm\int_{0}^{t}\|u\|^2_{L^{\infty}(\Om)}\io |\Del u|^2\\
\leq& c_{7}\int_{0}^{t}\io |\Del u|^2
\end{split}
\eee 
with $c_{7}:= \tm c_{4}+2\tm c_{5}c_{3}K^2$. Combining (\ref{E16.4}) and (\ref{E16.6})-(\ref{E16.20}) we have
\bee
\frac{d}{dt}\F^{(1)}_{\eps}(t)+\frac{\bet}{2} \frac{d}{dt}\io |(1+\eps A)^{-1}\Del v|^2\leq (c_{1}+10)\F^{(1)}_{\eps}(t)+c_{6}\io |\Del u|^2+c_{7}\int_{0}^{t}\io |\Del u|^2+c_{2}
\eee
for all $t\in (0, \tm)$.
After an integration with respect to time, we arrive at
\be{E16.20r}
\spl
&\quad\,\,\F^{(1)}_{\eps}(t)+\frac{\bet e^{-(c_{1}+10)t}}{2}\io |(1+\eps A)^{-1}\Del v|^2+\frac{\bet(c_{1}+10)}{2}\int_{0}^{t}\io e^{(c_{1}+10)(t-s)} |(1+\eps A)^{-1}\Del v|^2\\[1mm]
&\leq \F^{(1)}_{\eps}(0)+c_{6}\int_{0}^{t}e^{(c_{1}+10)(t-s)}\io |\Del u|^2\,ds \\
&\quad+ c_{7}\int_{0}^{t}e^{(c_{1}+10)(t-s)}\int_{0}^{s}\io |\Del u|^2\,d\sigma ds+c_{2}\int_{0}^{t}e^{(c_{1}+10)(t-s)}\,ds\\[1mm]
&\leq c_{8}+c_{9}\int_{0}^{t}\io |\Del u|^2
\end{split}
\ee
where $c_{8}:=\left\{\frac{1}{2}\io |\Del u_{0}|^2+\frac{c_{2}}{c_{1}+10}\right\}\cd e^{(c_{1}+10)\tm}$ and $c_{9}:=\left\{c_{6}+c_{7}\tm \right\}\cd e^{(c_{1}+10)\tm}$. Here we used $v(\cd, 0)\equiv 0$, $w(\cd,0)\equiv 0$ and $z(\cd,0)\equiv 0$, which imply 
$$ \F^{(1)}_{\eps}(0)=\frac{1}{2}\io |(1+\eps A)^{-1}\Del u_{0}|^2\leq \frac{1}{2}\io |\Del u_{0}|^2$$
by (\ref{P2}). Moreover, for all $t\in (0, \tm)$
$$ \int_{0}^{t}e^{(c_{1}+10)(t-s)}\,ds\leq \frac{e^{(c_{1}+10)\tm}}{c_{1}+10}\left(1-e^{-(c_{1}+10)t}\right)$$
and
\bee
\spl
\int_{0}^{t}e^{(c_{1}+10)(t-s)}\int_{0}^{s}\io |\Del u|^2\,d\sigma ds&\leq e^{(c_{1}+10)\tm} \int_{0}^{t}\int_{0}^{s}\io |\Del u|^2\,d\sigma ds\\[1mm]
&=e^{(c_{1}+10)\tm}\int_{0}^{t}(t-\sigma)\io |\Del u|^2\,d\sigma\\[1mm]
&\leq \tm e^{(c_{1}+10)\tm}\int_{0}^{t}\io |\Del u|^2.
\end{split}
\eee 
In view of (\ref{E16.5}), dropping the nonnegative terms on the left-hand side of (\ref{E16.20r}) we obtain 
$$ \frac{1}{2}\io |(1+\eps A)^{-1}\Del u(\cd,t)|^2\leq c_{8}+c_{9} \int_{0}^{t}\io |\Del u|^2 \qquad \mbox{for all}\; t\in (0,\tm).$$
According to (\ref{P4}), and since $\Del u \in L_{loc}^{\infty}([0,\tm); L^{2}(\Om))$, as $\eps \searrow 0$ the function $y(t):=\io |\Del u(\cd,t)|^2$ belongs to $L_{loc}^{\infty}([0, \tm))$ and satisfies  
$$ y(t)\leq 2c_{8}+2c_{9}\int_{0}^{t}y(s)ds \qquad \mbox{for a.e. } t\in (0,\tm).$$
An application of Gronwall's inequality now yields 
$$ y(t)\leq 2c_{8}e^{2c_{9}t}\qquad \mbox{for a.e. } t\in (0,\tm).$$
In view of the finiteness of $\tm$, inequality (\ref{E16.2}) follows.
\qed  
The argument in the proof of Lemma \ref{lem1.9} can be applied to establish also the next lemma. The only difference is that now we test the lifted equation (\ref{E15.6}) once more in a carefully chosen manner. This yields the required  estimate despite the lack of regularity.

\begin{lem}\label{lem1.10}
Under the same assumptions  of Lemma \ref{lem1.9}, there exists 
$C_{2}>0$ such that
\be{E16.22}
\io |\Del u_{t}(\cd,t)|^2\leq C_{2} \qquad \mbox{for a.e. } t\in (0,\tm).
\ee 
\end{lem}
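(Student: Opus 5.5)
We mimic the proof of Lemma \ref{lem1.9}, now working with the lifted equation (\ref{E15.6}), which involves $u,v,w$ together with $u_t,u_{tt}$, in place of (\ref{E15.07}). Everything is shifted up by one time derivative. For $\eps>0$ we introduce
\begin{align*}
\F^{(2)}_{\eps}(t):=&\,\frac12\io |(1+\eps A)^{-1}\Del u_t|^2+\frac{\gamma}{2}\io |\na\{(1+\eps A)^{-1}\Del u\}|^2+\del\io \na\{(1+\eps A)^{-1}\Del u\}\cd\na\{(1+\eps A)^{-1}\Del v\}\\
&+\varrho\io \na\{(1+\eps A)^{-1}\Del u\}\cd\na\{(1+\eps A)^{-1}\Del w\}+\mu\io|\na\{(1+\eps A)^{-1}\Del v\}|^2+\lam\io|\na\{(1+\eps A)^{-1}\Del w\}|^2 .
\end{align*}
As in (\ref{GD}), each term can be written as $-\frac1\eps(\cdot)+\frac1\eps(1+\eps A)^{-1}(\cdot)$ applied to $u_t,u,v,w$. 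The regularity (\ref{Def1}), in particular $u_{tt}\in C^0([0,\tm);L^2)$ and $u_t\in C^0([0,\tm);W^{1,2}_0)$, then gives $\F^{(2)}_\eps\in C^1([0,\tm))$.

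Differentiating and integrating by parts with (\ref{P1}) yields the leading term
\[
\io (1+\eps A)^{-1}\Del u_t\cd \Del(1+\eps A)^{-1}\{u_{tt}-\gamma\Del u-\del\Del v-\varrho\Del w\},
\]
plus cross terms identical in structure to those in (\ref{E16.4}). We then substitute (\ref{E15.6}). The $\al$-term gives $-\al\io|(1+\eps A)^{-1}\Del u_t|^2$. The $\bet$-term is the exact derivative $-\frac\bet2\frac{d}{dt}\io|(1+\eps A)^{-1}\Del u|^2$, which is moved to the left. The lower-order cross terms are bounded by $c\,\F^{(2)}_\eps$ through the same Young inequality (\ref{E16.5})--(\ref{E16.6}), with the same $\mu,\lam$. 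The data term $t\psi_1+\psi_2$ is handled as in (\ref{E16.7}) using (\ref{P2}), and is finite because of (\ref{IN2}).

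The main obstacle is the nonlinear term
\[
\io (1+\eps A)^{-1}\Del u_t\cd (1+\eps A)^{-1}\Del\{f'(u)u_t+f(u)\}.
\]
It suffices to bound $\io|\Del(f'(u)u_t)|^2+\io|\Del f(u)|^2$ linearly by $\io|\Del u_t|^2$ plus a constant. The $f(u)$ part is already controlled by Lemma \ref{lem1.9}, (\ref{GN}) and (\ref{E16.1}). For the other part we expand
\[
\Del(f'(u)u_t)=f'(u)\Del u_t+2f''(u)\na u\cd\na u_t+f'''(u)|\na u|^2u_t+f''(u)u_t\Del u .
\]
The first term is bounded by $c\io|\Del u_t|^2$ via (\ref{E16.1}).

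For the remaining terms we need $\|u_t\|_{L^\infty}$ and $\|\na u_t\|_{L^4}$. Both are at most $c\|\Del u_t\|_{L^2}$ by (\ref{E13.55}) and (\ref{E13.7}). We also use $\|\na u\|_{L^4}^4\le c_3K^2C_1$, which follows from (\ref{GN}) and Lemma \ref{lem1.9}. Then
\[
\io|\na u|^2|\na u_t|^2\le\|\na u\|_{L^4}^2\|\na u_t\|_{L^4}^2\le c\|\Del u_t\|^2_{L^2},\qquad
\io|\na u|^4u_t^2\le c\|\Del u_t\|_{L^2}^2,\qquad
\io u_t^2|\Del u|^2\le C_1c\|\Del u_t\|^2_{L^2}.
\]
All of these are linear in $y_1(t):=\io|\Del u_t|^2$, which is exactly what is needed. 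The key point is that Lemma \ref{lem1.9} supplies the $L^\infty$-in-time control of $\Del u$ that makes this linearity possible.

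Collecting the estimates gives
\[
\frac{d}{dt}\Big(\F^{(2)}_\eps+\frac\bet2\io|(1+\eps A)^{-1}\Del u|^2\Big)\le c\,\F^{(2)}_\eps+c\,y_1(t)+c .
\]
We integrate in time and drop the nonnegative terms. We bound $\F^{(2)}_\eps(0)$ using (\ref{P2}), (\ref{P3}) and $u_0,u_1\in W^{4,2}(\Om)$; note that $\na\Del u_0\in L^2$ is needed here, and (\ref{IN2}) provides it. Letting $\eps\searrow0$ with (\ref{P4}) and the fact that $\Del u_t\in L^\infty_{loc}([0,\tm);L^2)$, we obtain
\[
y_1(t)\le c+c\int_0^t y_1 \qquad\mbox{for a.e. } t\in(0,\tm).
\]
Gronwall's inequality together with $\tm<\infty$ then yields (\ref{E16.22}).
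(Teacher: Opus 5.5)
Your proposal follows the paper's proof step for step. It uses the same functional $\F^{(2)}_\eps$ and the same substitution of (\ref{E15.6}), and treats the $\bet$-term as an exact derivative. The cross terms are absorbed into $\F^{(2)}_\eps$ via $\mu,\lambda$. The nonlinearity is reduced through (\ref{P2}) to $\io|\Del(f'(u)u_t)|^2+\io|\Del f(u)|^2$, which Lemma \ref{lem1.9}, (\ref{GN}) and (\ref{E16.1}) make linear in $\io|\Del u_t|^2$. All of that is fine.

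\textbf{The gap: the bound on $\F^{(2)}_\eps(0)$.} The step that does not go through under (\ref{IN2}) alone is the uniform-in-$\eps$ bound on $\F^{(2)}_\eps(0)$, and the paper's proof has the same defect. You correctly note that $\F^{(2)}_\eps(0)$ contains $\frac{\gamma}{2}\io|\na\{(1+\eps A)^{-1}\Del u_0\}|^2$; the paper's computation silently replaces this by $\frac{\gamma}{2}\io|(1+\eps A)^{-1}\Del u_0|^2$. However, (\ref{P3}) holds only for functions in $W^{1,2}_0(\Om)$, and $u_0\in W^{4,2}(\Om)\cap W^{1,2}_0(\Om)$ does not force $\Del u_0=0$ on $\pO$.

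This is not a technicality. For $\vp\in L^2(\Om)$ one has $\io|\na(1+\eps A)^{-1}\vp|^2=\sum_i\lambda_i(1+\eps\lambda_i)^{-2}(\io\vp e_i)^2$. As $\eps\searrow0$ this increases to $\sum_i\lambda_i(\io\vp e_i)^2$, which is finite if and only if $\vp\in W^{1,2}_0(\Om)$. Hence $\F^{(2)}_\eps(0)\to\infty$ whenever $\Del u_0\not\equiv0$ on $\pO$. Moreover, $\F^{(2)}_\eps(t)\ge\frac{\gamma}{4}\io|\na\{(1+\eps A)^{-1}\Del u(\cd,t)\}|^2$, so a uniform bound at any time would force $\Del u(\cd,t)\in W^{1,2}_0(\Om)$. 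No finer estimate can therefore rescue this functional.

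\textbf{The same obstruction in the data term.} Handling $t\psi_1+\psi_2$ as in (\ref{E16.7}) uses (\ref{P1}) in the form $\Del(1+\eps A)^{-1}\psi_i=(1+\eps A)^{-1}\Del\psi_i$, which requires $\psi_i\in D(A)$. But on $\pO$ one has $\psi_2=-\gamma\Del u_0$ and $\psi_1=-\gamma\Del u_1-\del\Del u_0$. Moreover, $\sup_{\eps>0}\|\Del(1+\eps A)^{-1}\psi\|_{L^2(\Om)}<\infty$ holds if and only if $\psi\in D(A)$.

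\textbf{The fix.} What is missing is the compatibility condition $\Del u_0,\Del u_1\in W^{1,2}_0(\Om)$, i.e.\ $u_0,u_1\in D(A^2)$. With it, $\psi_1,\psi_2\in D(A)$ and (\ref{P1}) applies. Also (\ref{P3}) then gives $\io|\na\{(1+\eps A)^{-1}\Del u_0\}|^2\le\io|\na\Del u_0|^2$, and the rest of your argument goes through as written. The same condition is tacitly needed in the paper, both here and in (\ref{E16.7}) within the proof of Lemma \ref{lem1.9}, on which your argument relies.
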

\proof
For $\eps>0$, we consider the  functional  
\be{EE10.1}
\spl
\F^{(2)}_{\eps}(t):=&\,\frac{1}{2}\io |(1+\eps A)^{-1}\Del u_{t}|^2+\frac{\gamma}{2}\io |\na \left\{(1+\eps A)^{-1}\Del u\right\}|^2\\
&+\del \io \na \left\{(1+\eps A)^{-1}\Del v\right\}\cd \na \{(1+\eps A)^{-1}\Del u\}\\&+\varrho\io \na\{ (1+\eps A)^{-1}\Del w\}\cd \na \{(1+\eps A)^{-1}\Del u\}+\mu \io |\na \left\{(1+\eps A)^{-1}\Del v\right\}|^2\\&+\lambda \io |\na \left\{(1+\eps A)^{-1}\Del w\right\}|^2, \qquad t\in (0,\tm).
\end{split}
\ee 
In view of (\ref{Def1}) and analogously to the proof of Lemma \ref{lem1.9}, we have $\F^{(2)}_{\eps}\in C^1([0,\tm))$. Furthermore, integrating by parts and using (\ref{E15.6}) and (\ref{P1}), the time derivative of $\F^{(2)}_{\eps}$ satisfies  
\begin{align}\label{EE10.2}
&\,\frac{d}{dt}\F^{(2)}_{\eps}(t)\nonumber\\[1mm]
={}&\io (1+\eps A)^{-1}\Del u_{t}\cd
(1+\eps A)^{-1}\Del u_{tt}
+\gamma \io \na \{(1+\eps A)^{-1}\Del u\}\cd
\na \{(1+\eps A)^{-1}\Del u_{t}\}
\nonumber\\
&+\del \io \left|\na \left\{(1+\eps A)^{-1}\Del u\right\}\right|^2
+\del \io \na \left\{(1+\eps A)^{-1}\Del v\right\}\cd
\na \left\{(1+\eps A)^{-1}\Del u_{t}\right\}
\nonumber\\
&+\varrho \io \na \left\{(1+\eps A)^{-1}\Del v\right\}\cd
\na \left\{(1+\eps A)^{-1}\Del u\right\}
+\varrho \io \na \left\{(1+\eps A)^{-1}\Del w\right\}\cd
\na \left\{(1+\eps A)^{-1}\Del u_{t}\right\}
\nonumber\\
&+2\mu \io \na \left\{(1+\eps A)^{-1}\Del u\right\}\cd
\na \left\{(1+\eps A)^{-1}\Del v\right\}
+2\lambda\io \na \left\{(1+\eps A)^{-1}\Del v\right\}\cd
\na \left\{(1+\eps A)^{-1}\Del w\right\}
\nonumber\\
={}&\io (1+\eps A)^{-1}\Del u_{t}\cd
\Del (1+\eps A)^{-1}
\left\{u_{tt}-\gamma \Del u-\del \Del v-\varrho \Del w\right\}
\nonumber\\
&+\varrho \io \na \left\{(1+\eps A)^{-1}\Del v\right\}\cd
\na \left\{(1+\eps A)^{-1}\Del u\right\}
+2\mu \io \na \left\{(1+\eps A)^{-1}\Del u\right\}\cd
\na \left\{(1+\eps A)^{-1}\Del v\right\}
\nonumber\\
&+2\lambda \io \na \left\{(1+\eps A)^{-1}\Del v\right\}\cd
\na \left\{(1+\eps A)^{-1}\Del w\right\}
\nonumber\\
={}&\io (1+\eps A)^{-1}\Del u_{t}\cd
\Del (1+\eps A)^{-1}
\left\{-\al u_{t}-\bet u+f'(u)u_{t}+f(u)+t\psi_{1}+\psi_{2}\right\}
\nonumber\\
&+\varrho \io \na \left\{(1+\eps A)^{-1}\Del v\right\}\cd
\na \left\{(1+\eps A)^{-1}\Del u\right\}
+2\mu \io \na \left\{(1+\eps A)^{-1}\Del u\right\}\cd
\na \left\{(1+\eps A)^{-1}\Del v\right\}
\nonumber\\
&+2\lambda \io \na \left\{(1+\eps A)^{-1}\Del v\right\}\cd
\na \left\{(1+\eps A)^{-1}\Del w\right\}
\nonumber\\
={}&-\al \io |(1+\eps A)^{-1}\Del u_{t}|^2
-\frac{\bet}{2}\frac{d}{dt}
\io |(1+\eps A)^{-1}\Del u|^2
\nonumber\\
&+\io (1+\eps A)^{-1}\Del u_{t}\cd
\Del (1+\eps A)^{-1}
\left\{f'(u)u_{t}+f(u)+t\psi_{1}+\psi_{2}\right\}
\nonumber\\
&+\varrho \io \na \left\{(1+\eps A)^{-1}\Del v\right\}\cd
\na \left\{(1+\eps A)^{-1}\Del u\right\}
+2\mu \io \na \left\{(1+\eps A)^{-1}\Del u\right\}\cd
\na \left\{(1+\eps A)^{-1}\Del v\right\}
\nonumber\\
&+2\lambda\io \na \left\{(1+\eps A)^{-1}\Del v\right\}\cd
\na \left\{(1+\eps A)^{-1}\Del w\right\}
\end{align}
for all $t\in(0,\tm)$. The choice of $\mu $ and $\lambda $, along with  Young's inequality, yields 
\be{EE10.3}
\spl
\F^{(2)}_{\eps}(t)\geq & \,\frac{1}{2}\io |(1+\eps A)^{-1}\Del u_{t}|^2+\frac{\gamma}{4}\io |\na \left\{(1+\eps A)^{-1}\Del u\right\}|^2+\frac{\mu}{4}\io |\na \left\{(1+\eps A)^{-1}\Del v\right\}|^2\\&+\frac{\lambda}{4}\io |\na \left\{(1+\eps A)^{-1}\Del w\right\}|^2
\end{split}
\ee
for all $t\in (0,\tm)$. Another application  of Young's inequality gives 
\bee{}
\spl
&-\al \io |(1+\eps A)^{-1}\Del u_{t}|^2+\varrho \io \na \left\{(1+\eps A)^{-1}\Del v\right\}\cd \na \left\{(1+\eps A)^{-1}\Del u\right\}\\&+2\mu \io \na \left\{(1+\eps A)^{-1}\Del u\right\}\cd \na \left\{(1+\eps A)^{-1}\Del v\right\}+2\lambda\io \na \left\{(1+\eps A)^{-1}\Del v\right\}\cd \na \left\{(1+\eps A)^{-1}\Del w\right\}\\
\leq& |\al |\io |(1+\eps A)^{-1}\Del u_{t}|^2+(\varrho+\mu+\lambda) \io |\na \left\{(1+\eps A)^{-1}\Del v\right\}|^2+(\varrho+\mu) \io |\na \left\{(1+\eps A)^{-1}\Del u\right\}|^2\\
&+\lambda\io |\na \left\{(1+\eps A)^{-1}\Del w\right\}|^2\\[1.3mm]
\leq & \,c_{1}\F^{(2)}_{\eps}(t)
\end{split}
\eee 
where $c_{1}:=2|\al|+\frac{4(\varrho+\mu+\lambda)}{\mu}+\frac{4(\varrho+\mu)}{\gamma}+4$. Combining (\ref{P1})-(\ref{P2}) with Young's inequality, we find 
\bee{}
\spl
&\quad\,\,\,\io (1+\eps A)^{-1}\Del u_{t}\cd \Del (1+\eps A)^{-1}\left\{t\psi_{1}+\psi_{2}\right\}\\[1mm]
&= t\io (1+\eps A)^{-1}\Del u_{t}\cd (1+\eps A)^{-1}\Del \psi_{1} +\io(1+\eps A)^{-1}\Del u_{t}\cd (1+\eps A)^{-1}\Del \psi_{2}\\[1mm]
&\leq 2\io |(1+\eps A)^{-1}\Del u_{t}|^2+ t^2\io |\Del \psi_{1}|^2+\io |\Del \psi_{2}|^2\\[1.7mm]
&\leq 4\F^{(2)}_{\eps}(t)+c_{2}
\end{split} 
\eee 
for all $t\in (0,\tm)$ where, according to (\ref{f})-(\ref{IN0}) and the assumption that  $\tm<\infty$,
$$ c_{2}:=\tm^2\io |\Del \psi_{1}|^2+\io |\Del \psi_{2}|^2<\infty.$$
Next, we estimate the integrals involving $f'(u)u_{t}$ and $f(u)$ in the right-hand side of (\ref{EE10.2}). To this end, we note that by (\ref{P1}) and Young's inequality 
\bee{}
\spl
&\,\,\,\io (1+\eps A)^{-1}\Del u_{t}\cd \Del (1+\eps A)^{-1}f'(u)u_{t}+\io (1+\eps A)^{-1}\Del u_{t}\cd \Del (1+\eps A)^{-1}f(u)
\\[0.5mm]
\leq&\, 4\F^{(2)}_{\eps}(t)+\io |\Del (f'(u)u_{t})|^2 +\io |\Del f(u)|^2\\[1mm]
\leq &\, 4\F^{(2)}_{\eps}(t)+\io \left|f''(u)\Del u u_{t}+f'''(u)|\na u|^2u_{t}+2f''(u)\na u\cd \na u_{t}+f'(u)\Del u_{t}\right|^2\\
&+ \io \left|f'(u)\Del u+f''(u)|\na u|^2\right|^2\\[1mm]
\leq &\, 4\F^{(2)}_{\eps}(t)+4\io [f''(u)]^2|\Del u|^2u^2_{t}+4\io [f'''(u)]^2|\na u|^4u^2_{t}+16 \io |f''(u)|^2|\na u|^2|\na u_{t}|^2\\
&+4\io |f'(u)|^2|\Del u_{t}|^2+ 2\io [f'(u)]^2|\Del u|^2+2\io [f''(u)]^2|\na u|^4\qquad\,\, \mbox{for all }\; t\in (0, \tm).
\end{split}
\eee 
Assumptions (\ref{E16.1}) and (\ref{f}) guarantee the existence of constants $c_{3},c_{4},c_{5}>0$ such that 
$$[f'(u)]^2\leq c_{3},\quad\,\, [f''(u)]^2\leq c_{4},\quad\,\, [f'''(u)]^2\leq c_{5} \quad \,\,\,\,\text{in}\,\,\, \Om \times (0, \tm).$$ Consequently 
\bee{}
\spl
&\,4\io [f''(u)]^2|\Del u|^2u^2_{t}+4\io [f'''(u)]^2|\na u|^4u^2_{t}+16 \io |f''(u)|^2|\na u|^2|\na u_{t}|^2+4\io |f'(u)|^2|\Del u_{t}|^2\\
&+ 2\io [f'(u)]^2|\Del u|^2+2\io [f''(u)]^2|\na u|^4\\[1mm]
\leq &\, 4c_{4}\io|\Del u|^2u^2_{t} +4c_{5}\io|\na u|^4u^2_{t}+16 c_{4}\io |\na u|^2|\na u_{t}|^2+4c_{3}\io |\Del u_{t}|^2+2c_{3}\io |\Del u|^2+2c_{4}\io |\na u|^4
\end{split}
\eee 
for all $t\in (0, \tm)$. At this point, due to elliptic regularity theory, the Gagliardo-Nirenberg inequality and the continuous embedding $W^{2,2} (\Om)\hookrightarrow L^{\infty}(\Om)$, there exist $c_{6}, c_{7}, c_{8}>0$ such that 
\bee
\|\vp \|^2_{L^\infty(\Om)}\leq c_{6}\|\Del \vp \|^2_{L^2(\Om)}\qquad \mbox{and} \qquad \|\na \vp\|^4_{L^4(\Om)}\leq c_{7}\|\Del \vp\|^2_{L^{2}(\Om)}\|\vp \|^2_{L^{\infty}(\Om)},
\eee 
together with 
\bee
\|\na \vp\|^2_{L^{4}(\Om)}\leq c_{8}\|\Del \vp\|_{L^{2}(\Om)}^2, 
\eee 
for all $\vp \in W^{2,2}(\Om)\cap W_{0}^{1,2}(\Om)$. Combining these estimates with (\ref{E16.1})-(\ref{E16.2}) and applying the Cauchy-Schwarz inequality we obtain
\bee{}
\spl
&4c_{4}\io|\Del u|^2u^2_{t} +4c_{5}\io|\na u|^4u^2_{t}+16 c_{4}\io |\na u|^2|\na u_{t}|^2+4c_{3}\io |\Del u_{t}|^2+2c_{3}\io |\Del u|^2+2c_{4}\io |\na u|^4\\
&\leq 4c_{4}c_{6}C_{1}\io |\Del u_{t}|^2+4c_{5}c_{6}c_{7}C_{1}K^2\io |\Del u_{t}|^2+16c_{4}\|\na u\|^2_{L^4(\Om)}\|\na u_{t}\|^2_{L^4(\Om)}+4c_{3}\io |\Del u_{t}|^2+2c_{3}C_{1}\\
&\quad +2c_{4}c_{7}C_{1}K^2\\[0.5mm]
&\leq c_{9} \io |\Del u_{t}|^2+c_{10}
\end{split}
\eee 
where $c_{9}:=4c_{4}c_{6}C_{1}+4c_{5}c_{6}c_{7}C_{1}K^2+16c_{4}\sqrt{c_{7}C_{1}}Kc_{8}+4c_{3}$ and $c_{10}:=2c_{3}C_{1}+2c_{4}c_{7}C_{1}K^2$. Putting together (\ref{EE10.2}) and the inequalities above, we arrive at
\bee
\frac{d}{dt}\F^{(2)}_{\eps}(t)+\frac{\bet}{2}\frac{d}{dt}\io |(1+\eps A)^{-1}\Del u|^2\leq (c_{1}+8)\F^{(2)}_{\eps}(t)+c_{9}\io |\Del u_{t}|^2+c_{10}+c_{2}
\eee 
for all $t\in (0, \tm)$. Integrating with respect to time and integrating by parts 
\bee
\spl
&\quad\,\,\,\F^{(2)}_{\eps}(t)+\frac{\bet}{2}\io |(1+\eps A)^{-1}\Del u|^2+\frac{\bet(c_{1}+8)}{2}\int_{0}^{t}e^{(c_1+8)(t-s)}\io |(1+\eps A)^{-1}\Del u|^2\,ds\\[0.5mm]
&\leq \F^{(2)}_{\eps}(0)+\frac{\bet}{2}\io |(1+\eps A)^{-1}\Del u_{0}|^2+c_{9}\int_{0}^{t}e^{(c_{1}+8)(t-s)}\io |\Del u_{t}|^2\,ds +(c_{10}+c_{2})\int_{0}^{t}e^{(c_1+8)(t-s)}\,ds. 
\end{split}
\eee 
Therefore, by (\ref{P1}), for all $\eps>0$ we have
\bee
\spl
\F^{(2)}_{\eps}(0)+\frac{\bet}{2}\io |(1+\eps A)^{-1}\Del u_{0}|^2=&\,\frac{\bet+\gamma}{2}\io |(1+\eps A)^{-1}\Del u_{0}|^2+\frac{1}{2}\io |(1+\eps A)^{-1}\Del u_{1}|^2\\[1mm]
\leq &\,\frac{\bet+\gamma}{2}\io |\Del u_{0}|^2+\frac{1}{2}\io |\Del u_{1}|^2.
\end{split}
\eee 
Estimate (\ref{E16.22}) now follows from (\ref{EE10.3}) and the same argument as in the proof of Lemma \ref{lem1.9}.
\qed  
The next bound in our regularity analysis can be carried out directly from (\ref{E15.5}), without requiring any further regularization.
\begin{lem}\label{lem11}
Assume that (\ref{IN0}) holds. If in addition $\tm<\infty$ and (\ref{E16.1}) holds for some $K>0$, then there exists $C_{3}>0$ such that 
\be{E111.1}
\io |\na u_{tt}(\cd,t)|^2\leq C_{3}\qquad \mbox{for a.e. } t\in (0, \tm).
\ee 
\end{lem}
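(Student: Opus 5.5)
We test the once-integrated identity (\ref{E15.5}) with $-\Del u_{tt}$. This is admissible without any regularization, because $u_{tt}\in L^\infty_{loc}([0,\tm);W^{2,2}(\Om)\cap W^{1,2}_0(\Om))$ and every term of (\ref{E15.5}) lies in $L^\infty_{loc}([0,\tm);L^2(\Om))$. Integrating by parts and using $u_{ttt}\in L^\infty_{loc}W^{1,2}_0$, we have $\io u_{ttt}(-\Del u_{tt})=\frac12\frac{d}{dt}\io|\na u_{tt}|^2$, where $t\mapsto\io|\na u_{tt}|^2$ is absolutely continuous since $u_{tt}\in W^{1,\infty}_{loc}([0,\tm);W^{1,2}_0(\Om))$. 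The remaining linear terms are handled as follows:
\begin{itemize}
\item $\al\io u_{tt}(-\Del u_{tt})=\al\io|\na u_{tt}|^2$.
\item $\bet\io u_t(-\Del u_{tt})=\bet\io\na u_t\cdot\na u_{tt}$.
\item $-\gamma\io\Del u_t(-\Del u_{tt})=\frac{\gamma}{2}\frac{d}{dt}\io|\Del u_t|^2$.
\item $\del\io\Del u\,\Del u_{tt}$ and $\varrho\io\Del v\,\Del u_{tt}$ are kept on the right.
\end{itemize}
We thereby obtain an identity for $y(t):=\frac12\io|\na u_{tt}|^2+\frac{\gamma}{2}\io|\Del u_t|^2$. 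The term $\frac\gamma2\io|\Del u_t|^2$ is already controlled by Lemma \ref{lem1.10}, so it may either stay in the energy or be discarded.

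The dangerous terms are $\del\io\Del u\,\Del u_{tt}$ and $\varrho\io\Del v\,\Del u_{tt}$, since $\Del u_{tt}$ is not controlled. We remove them by integrating in time. For the first,
\[
\int_0^t\io\Del u\,\Del u_{tt}=\io\Del u\,\Del u_t\Big|_0^t-\int_0^t\io|\Del u_t|^2 ,
\]
and the right-hand side is bounded by Lemmas \ref{lem1.9} and \ref{lem1.10} together with $\tm<\infty$. For the second we integrate by parts twice in time, using $\Del v_t=\Del u$ and $\Del v(\cdot,0)=0$; all resulting terms are bounded by $C_1$, $C_2$ and $\tm$. The identity is therefore best integrated over $(0,t)$ from the start. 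In the same way, $\int_0^t\io\psi_1(-\Del u_{tt})=\io\na\psi_1\cdot\na u_t\big|_0^t$ is bounded by $\|\na\psi_1\|_{L^2}\|\na u_t\|_{L^2}$. Here $\psi_1\in W^{1,2}_0(\Om)$ by (\ref{IN0}), as $f(0)=0$ and the data vanish on $\pO$; this holds also for $\psi_1\in W^{2,2}$.

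The nonlinear terms are integrated by parts in space, placing $\na$ on the nonlinearity and keeping $\na u_{tt}$. We use (\ref{E16.1}) to bound $f',f'',f'''$ in $L^\infty$, and $\|u_t\|_{L^\infty}+\|\na u\|_{L^4}+\|\na u_t\|_{L^4}\le C$ from Lemmas \ref{lem1.9}--\ref{lem1.10} with $W^{2,2}\hookrightarrow L^\infty\cap W^{1,4}$ ($n\le3$).
\begin{itemize}
\item For $\io\na(f'(u)u_{tt})\cdot\na u_{tt}=\io f''(u)u_{tt}\na u\cdot\na u_{tt}+\io f'(u)|\na u_{tt}|^2$, we use $\|u_{tt}\|_{L^4}\le c\|\na u_{tt}\|_{L^2}$, which gives a bound $C\io|\na u_{tt}|^2$.
\item For $\na(f''(u)u_t^2)$ and $\na(f'(u)u_t)$, the result is linear in $\|\na u_{tt}\|_{L^2}$ times bounded quantities.
\end{itemize}
Altogether we reach
\[
\frac12\io|\na u_{tt}(\cdot,t)|^2\le C+C\int_0^t\io|\na u_{tt}|^2 \qquad\text{for a.e. } t\in(0,\tm),
\]
and Gronwall's inequality gives (\ref{E111.1}).

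The main obstacle is the pair of terms containing $\Del u_{tt}$: pointwise in time they cannot be absorbed, and the time integration by parts must be done carefully with respect to regularity. This requires that $t\mapsto\io\Del u\,\Del u_t$ be absolutely continuous, with derivative $\io|\Del u_t|^2+\io\Del u\,\Del u_{tt}$. That holds because $\Del u,\Del u_t\in W^{1,\infty}_{loc}([0,\tm);L^2(\Om))$ by (\ref{Def1}).
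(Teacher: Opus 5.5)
Your argument is correct and is essentially the paper's proof. The paper builds the cross terms $\del\io\Del u\,\Del u_t+\varrho\io\Del v\,\Del u_t$, together with $\mu\io|\Del v|^2+\lambda\io|\Del u|^2$, into the functional $\F^{(3)}$; this is exactly your time integration by parts of $\del\io\Del u\,\Del u_{tt}$ and $\varrho\io\Del v\,\Del u_{tt}$. It then treats the nonlinearity by spatial integration by parts, uses Lemmas~\ref{lem1.9}--\ref{lem1.10}, and closes with Gronwall, just as you do.

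One slip: $\psi_1$ is in general not in $W^{1,2}_0(\Om)$, because $\gamma\Del u_1+\del\Del u_0$ need not vanish on $\pO$. So do not integrate by parts in space there. Instead write $\int_0^t\io\psi_1(-\Del u_{tt})=-\io\psi_1\Del u_t\big|_0^t$ and bound it by $\|\psi_1\|_{L^2(\Om)}(C_2^{1/2}+\|\Del u_1\|_{L^2(\Om)})$. This only requires $\psi_1\in L^2(\Om)$.
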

\proof
In view of (\ref{Def1}) we infer that
\begin{align*}
\na u_{tt},\,\,\, \na u_{ttt}\quad &\quad \mbox{ belong to } \quad L_{loc}^{\infty}([0,\tm); L^2(\Om, \R^n)),\\
\Del u_{t},\,\,\, \Del u_{tt},\,\,\,  \Del u,\,\,\,\,  \Del v,\,\,\,\,\Del v_t &\quad\mbox{ belong to } \quad L_{loc}^{\infty}([0,\tm); L^2(\Om)).
\end{align*}
Consequently, choosing $\mu$ and $\lambda$ as in (\ref{E07}), we obtain 
\bee
\spl
\F^{(3)}(t):=&\frac{1}{2}\io |\na u_{tt}(\cd,t)|^2+\frac{\gamma}{2}\io |\Del u_{t}(\cd,t)|^2+\del \io \Del u(\cd, t) \Del u_{t}(\cd,t)+\varrho \io \Del v(\cd,t) \Del u_{t}(\cd,t)\\
&+\mu\io |\Del v(\cd, t)|^2+\lambda \io |\Del u(\cd,t)|^2 
\end{split}
\eee 
which belongs to $ W_{loc}^{1,\infty}([0,\tm))$ and, upon integrating by parts, satisfies 
\be{E111.2}
\spl
\frac{d}{dt}\F^{(3)}(t)=&\io \na u_{tt}\cd \na u_{ttt}+\gamma\io \Del u_{t} \Del u_{tt}+\varrho \io \Del u \Del u_{t}+\varrho \io \Del v \Del u_{tt}\\
&\,+\del \io |\Del u_{t}|^2+\del\io \Del u \Del u_{tt}+2\mu \io \Del u\Del v +2\lambda \io \Del u \Del u_{t}\\
=& \io \Del u_{tt}\cd\{-u_{ttt}+\gamma \Del u_{t}+\del \Del u+\varrho \Del v\}+\varrho \io \Del u \Del u_{t}+\del \io |\Del u_{t}|^2\\
&\,+2\mu \io \Del u \Del v+2\lambda \io \Del u\Del u_{t}
\end{split} 
\ee 
for a.e.\ $t\in (0,\tm)$. Moreover, in view of (\ref{E07}) and the Young's inequality,  
\be{E111.3}
\F^{(3)}(t)\geq \frac{1}{2}\io |\na u_{tt}|^2 +\frac{\gamma}{4}\io |\Del u_{t}|^2+\frac{\mu}{4}\io |\Del v|^2+\frac{\lambda}{4}\io |\Del u|^2
\ee  
for a.e.\ $t\in (0,\tm)$ which, together with a further use of Young's inequality, implies 
\be{E111.4}
\del \io |\Del u_{t}|^2+\varrho \io \Del u \Del u_{t}+2\mu \io \Del u\Del v+2\lambda \io \Del u\Del u_{t}\leq c_{1}\F^{(3)}(t)
\ee 
where $c_{1}:= \frac{4(\varrho +\lambda+\del)}{\gamma}+\frac{4(\varrho+\lambda+\mu)}{\lambda}+4$. Using (\ref{E15.5}) with integration by parts and Young's inequality
\be{E111.5}
\spl
&\,\,\io \Del u_{tt}\cd \{-u_{ttt}+\gamma \Del u_{t}+\del \Del u+\varrho \Del v\}\\[0.7mm]
=&\io \Del u_{tt}\cd \left\{\al u_{tt}+\bet u_{t}-f''(u)u^2_{t}-f'(u)u_{tt}-\psi_{1}\right\}\\[0.5mm]
=&- \al \io |\na u_{tt}|^2-\frac{\bet}{2}\frac{d}{dt}\io |\na u_{t}|^2+\io \na \{f''(u)u^2_{t}+f'(u)u_{tt}\}\cd \na u_{tt}
+\io \na \psi_{1}\cd \na u_{tt}\\[1mm]
\leq & [|\al|+1]\io |\na u_{tt}|^2-\frac{\bet}{2}\frac{d}{dt}\io |\na u_{t}|^2+\io f'''(u)u^2_{t}\na u\cd \na u_{tt}+2 \io u_{t}f''(u)\na u_{t}\cd \na u_{tt}\\
&+\io f''(u)u_{tt}\na u\cd \na u_{tt}+\io f'(u)|\na u_{tt}|^2 +\io |\na \psi_{1}|^2 \\[1mm]
& \leq [|\al|+4]\io |\na u_{tt}|^2-\frac{\bet}{2}\frac{d}{dt}\io |\na u_{t}|^2+\io |f'''(u)|^2u^{4}_{t}|\na u|^2+4 \io |f''(u)|^2u^2_{t}|\na u_{t}|^2\\
&+\io |f''(u)|^2u^2_{tt}|\na u|^2+\io f'(u)|\na u_{tt}|^2+\io |\na \psi_{1}|^2,\, \qquad \mbox{for a.e. } t\in (0, \tm).
\end{split}
\ee 
Under the assumption (\ref{E16.1}) and since $f\in C^4(\R)$, we may choose $c_{2}, c_{3}, c_{4}>0$ such that 
$$|f'''(u)|^2\leq c_{4},\quad \,\, |f''(u)|^2\leq c_{3},\quad\,\, |f'(u)|\leq c_{2},\quad \,\,\,\,\text{in } \Om \times (0, \tm).$$ Therefore
\be{E111.6}
\spl
&\io |f'''(u)|^2u^{4}_{t}|\na u|^2+4 \io |f''(u)|^2u^2_{t}|\na u_{t}|^2
+\io |f''(u)|^2u^2_{tt}|\na u|^2+\io f'(u)|\na u_{tt}|^2\\
\leq & c_{4}\io u^4_{t}|\na u|^2+4c_{3}\io u^2_{t}|\na u_{t}|^2+c_{3}\io u^2_{tt}|\na u|^2+c_{2}\io |\na u_{tt}|^2.
\end{split} 
\ee 
Elliptic regularity theory and the continuous embeddings $W^{1,2}(\Om)\hookrightarrow L^{4}(\Om)$,  $ W^{2,2}(\Om)\hookrightarrow L^{\infty}(\Om)$ and $W^{2,2}(\Om)\hookrightarrow W^{1,4}(\Om)$ ensure the existence of $c_{5}, c_{6}, c_{7}, c_{8}>0$ such that 
\be{II1}
\|\na \vp\|_{L^4(\Om)}\leq c_{5}\|\Del \vp\|_{L^{2}(\Om)}\qquad\, \mbox{for all } \; \vp \in W_{0}^{1,2}(\Om)\cap W^{2,2}(\Om),
\ee 
\be{II2}
\,\,\,\| \vp\|_{L^\infty(\Om)}\leq c_{6}\|\Del \vp\|_{L^{2}(\Om)}\qquad\, \mbox{for all } \; \vp \in W_{0}^{1,2}(\Om)\cap W^{2,2}(\Om),
\ee
and
\be{II3}
\| \vp\|_{L^4(\Om)}\leq c_{7}\|\na \vp\|_{L^{2}(\Om)}\qquad\, \mbox{for all } \; \vp \in W_{0}^{1,2}(\Om),
\ee 
as well as the Poincar\'e inequality 
\be{II4}
\| \na \vp\|_{L^2(\Om)}\leq c_{8}\|\Del \vp\|_{L^{2}(\Om)}\qquad\, \mbox{for all } \; \vp \in W_{0}^{1,2}(\Om)\cap W^{2,2}(\Om).
\ee 
Invoking Lemmas \ref{lem1.9}-\ref{lem1.10} together with  (\ref{II2}) and (\ref{II4}), there exists $c_{9}>0$ such that 
$$  c_{4}\io u^4_{t}|\na u|^2\leq c_{4}c^4_{6}c^2_{8}\|\Del u_{t}\|^4_{L^2(\Om)}\|\Del u\|^2_{L^2(\Om)}\leq c_{4}c^4_{6}c^2_{8} C_{1}C^2_{2}:=c_{9}.$$ 
Likewise, there exists $c_{10}>0$ such that
$$ 	4c_{3}\io u^2_{t}|\na u_{t}|^2\leq 4c_{3}c^2_{6}c^2_{8}\|\Del u_{t}\|^{4}_{L^2(\Om)}\leq  4c_{3}c^2_{6}c^2_{8} C^2_{2}:=c_{10}.$$ 
Moreover, the Cauchy-Schwarz inequality combined with (\ref{II3}) and Lemma \ref{lem1.9} yield
$$ c_{3}\io u^2_{tt}|\na u|^2\leq c_{3}\| u_{tt}\|^2_{L^{4}(\Om)} \|\na u\|^2_{L^{4}(\Om)}\leq c_{3}c^2_{7}c^2_{5}\|\na u_{tt}\|^2_{L^2(\Om)}\|\Del u \|^2_{L^2(\Om)}\leq c_{3}c^2_{7}c^2_{5} C_{1}\io |\na u_{tt}|^2.$$
Inserting these estimates into (\ref{E111.6}) and combining the result with (\ref{E111.4})-(\ref{E111.5}) and (\ref{E111.2}), there exist $c_{11},c_{12}>0$ such that 
\bee
\frac{d}{dt} \F^{(3)}(t)\leq c_{1}\F^{(3)}(t)+c_{11}\io |\na u_{tt}|^2+\io |\Del \psi_{1}|^2+c_{12}
\eee 
for a.e.\ $t\in (0,\tm)$. Since $$\psi_{1}= u_{3}+\al u_{2}+\bet u_{1}-\gamma \Del u_{1}-\del \Del u_{0}-f''(u_{0})u^2_{1}-f'(u_{0})u_{2}-f'(u_{0})u_{1}\in W^{2,2}(\Om)$$ by virtue of (\ref{IN0}), the Gronwall lemma ensures the boundedness of $\F^{(3)} $ on $(0,\tm)$. Finally,  applying  (\ref{E111.3}) once more, we arrive at the desired estimate (\ref{E111.1}).
\qed
To establish bounds for the remaining two quantities in (\ref{E14.1}), we shall employ equation (\ref{Def1E}) within an energy argument involving a functional analogous to that introduced in (\ref{E8}), combined with the  bounds obtained in Lemmas \ref{lem1.9}-\ref{lem11}. Once again, the limited regularity of the strong solutions necessitates the use of a suitable regularization procedure.
\begin{lem}\label{lem2.11}
Assume that (\ref{IN0}) holds. If in addition $\tm<\infty$ and (\ref{E16.1}) is satisfied for some $K>0$, then 
there exist $C_{4},C_{5}>0$ such that 
\be{E112.1}
\io |\Del u_{tt}(\cd,t)|^2\leq C_{4}
\ee 
and 
\be{E112.2}
\io |\na u_{ttt}(\cd,t)|^2\leq C_{5}
\ee 
for a.e.\ $t\in (0, \tm).$
\end{lem}
\proof
For $\eps >0$, let  $\mu$ and $\lambda$ be as in (\ref{E07}). We introduce the functional  
\be{E112.3}
\spl
\F^{(4)}_{\eps}(t):=\,&\frac{1}{2}\io |\na \left\{(1+\eps A)^{-1}u_{ttt}(\cd,t)\right\}|^2+\frac{\gamma}{2}\io |\Del \left\{(1+\eps A)^{-1}u_{tt}(\cd,t)\right\}|^2\\
&+\del \io \Del\left\{(1+\eps A)^{-1}u_{t}(\cd,t)\right\}\cd \Del\left\{(1+\eps A)^{-1}u_{tt}(\cd,t)\right\}
\\&+\varrho\io \Del\left\{(1+\eps A)^{-1}u(\cd,t)\right\}\cd \Del\left\{(1+\eps A)^{-1}u_{tt}(\cd,t)\right\}\\
&+\mu \io |\Del \left\{(1+\eps A)^{-1}u_{t}(\cd,t)\right\}|^2+\lambda \io |\Del \left\{(1+\eps A)^{-1}u(\cd,t)\right\}|^2
\end{split}
\ee
for a.e.\ $t\in (0, \tm)$. 
From the boundedness of the operators $\na (1+\eps A)^{-1}$ and $\Del (1+\eps A)^{-1}$ on $L^{2}(\Om)$ which follows from 
$$ \small \|\Del \{(1+\eps A)^{-1}\vp \}\|_{L^2(\Om)}\leq \frac{1}{\eps}\|\vp \|_{L^2(\Om)}\qquad \mbox{and}\qquad \|\na\{ (1+\eps A)^{-1}\vp\} \|_{L^2(\Om)}\leq \frac{1}{2\sqrt{\eps}}\|\vp \|_{L^2(\Om)} $$
for all $\vp \in L^2(\Om)$ and $\eps>0$, we infer from (\ref{Def1}) that 
$$\na\{(1+\eps A)^{-1}u_{ttt}\}, \na\{(1+\eps A)^{-1}u_{tttt}\}\in L_{loc}^{\infty}([0,\tm); L^2(\Om; \R^n))$$
and
$$\Del \{(1+\eps A)^{-1}u_{tt}\}, \, \Del \{(1+\eps A)^{-1}u_{ttt}\}, \,  \Del \{(1+\eps A)^{-1}u_{t}\}, \;\Del \{(1+\eps A)^{-1}u \}\in L_{loc}^{\infty}([0,\tm); L^2(\Om)).$$ 
Consequently $\F^{(4)}_{\eps}\in W_{loc}^{1,\infty}([0,\tm))$. 
Differentiating $\F^{(4)}_{\eps}$ with respect to time, integrating by parts and using (\ref{Def1E})-(\ref{P1}) we obtain
\allowdisplaybreaks 
\begin{align}\label{E112.5}
&\,\,\frac{d}{dt}\F^{(4)}_{\eps}(t)\nonumber\\[1mm]
={}&\io \na \{(1+\eps A)^{-1}u_{ttt}\}\cd
\na \{(1+\eps A)^{-1}u_{tttt}\}
+\gamma \io \Del \{(1+\eps A)^{-1}u_{tt}\}\cd
\Del \{(1+\eps A)^{-1} u_{ttt}\}
\nonumber\\
&+\del \io |\Del\{(1+\eps A)^{-1}u_{tt}\}|^2
+\del \io \Del\{(1+\eps A)^{-1}u_{t}\}\cd
\Del\{(1+\eps A)^{-1}u_{ttt}\}
\nonumber\\
&+\varrho \io \Del\{(1+\eps A)^{-1}u_{t}\}\cd
\Del\{(1+\eps A)^{-1}u_{tt}\}
+\varrho \io \Del\{(1+\eps A)^{-1}u\}\cd
\Del\{(1+\eps A)^{-1}u_{ttt}\}
\nonumber\\
&+2\mu \io \Del\{(1+\eps A)^{-1}u_{tt}\}\cd
\Del\{(1+\eps A)^{-1}u_{t}\}
+2\lambda \io \Del\{(1+\eps A)^{-1}u_{t}\}\cd
\Del\{(1+\eps A)^{-1}u\}
\nonumber\\
={}&\io \na \{(1+\eps A)^{-1}u_{ttt}\}\cd
\na \{(1+\eps A)^{-1}u_{tttt}\}
-\gamma \io \na \{(1+\eps A)^{-1}\Del u_{tt}\}\cd
\na \{(1+\eps A)^{-1}\Del u_{ttt}\}
\nonumber\\
&+\del \io |\Del\{(1+\eps A)^{-1}u_{tt}\}|^2
-\del \io \na\{(1+\eps A)^{-1}\Del u_{t}\}\cd
\Del\{(1+\eps A)^{-1}u_{ttt}\}
\nonumber\\
&+\varrho \io \Del\{(1+\eps A)^{-1}u_{t}\}\cd
\Del\{(1+\eps A)^{-1}u_{tt}\}
-\varrho \io \na \{(1+\eps A)^{-1}\Del u\}\cd
\na \{(1+\eps A)^{-1}u_{ttt}\}
\nonumber\\
&+2\mu \io \Del\{(1+\eps A)^{-1}u_{tt}\}\cd
\Del\{(1+\eps A)^{-1}u_{t}\}
+2\lambda \io \Del\{(1+\eps A)^{-1}u_{t}\}\cd
\Del\{(1+\eps A)^{-1}u\}
\nonumber\\
={}&\io \na \{(1+\eps A)^{-1}u_{ttt}\}\cd
\na \left\{(1+\eps A)^{-1}
\{u_{tttt}-\gamma \Del u_{tt}-\del \Del u_{t}
-\varrho \Del u\}\right\}
\nonumber\\
&+\varrho \io \Del\{(1+\eps A)^{-1}u_{t}\}\cd
\Del\{(1+\eps A)^{-1}u_{tt}\}
+2\mu \io \Del\{(1+\eps A)^{-1}u_{tt}\}\cd
\Del\{(1+\eps A)^{-1}u_{t}\}
\nonumber\\
&+2\lambda \io \Del\{(1+\eps A)^{-1}u_{t}\}\cd
\Del\{(1+\eps A)^{-1}u\}
\nonumber\\
={}&-\al\io |\na \{(1+\eps A)^{-1}u_{ttt}\}|^2
-\frac{\bet}{2}\frac{d}{dt}
\io |\na\{(1+\eps A)^{-1}u_{tt}\}|^2
\nonumber\\
&+\io \na \{(1+\eps A)^{-1}u_{ttt}\}\cd
\na \Big\{(1+\eps A)^{-1}
\left\{f'(u)u_{ttt}
+3f''(u)u_{t}u_{tt}
+f'''(u)u^3_{t}
+f'(u)u_{tt}
+f''(u)u^2_{t}\right\}\Big\}
\nonumber\\
&+\varrho \io \Del\{(1+\eps A)^{-1}u_{t}\}\cd
\Del\{(1+\eps A)^{-1}u_{tt}\}
+2\mu \io \Del\{(1+\eps A)^{-1}u_{tt}\}\cd
\Del\{(1+\eps A)^{-1}u_{t}\}
\nonumber\\
&+2\lambda \io \Del\{(1+\eps A)^{-1}u_{t}\}\cd
\Del\{(1+\eps A)^{-1}u\}
\end{align}
for a.e.\ $t\in(0,\tm)$. Furthermore, by virtue of (\ref{E07}) and Young's inequality, 
\be{E112.4}
\spl
\F^{(4)}_{\eps}(t)\geq&\, \frac{1}{2}\io |\na \{(1+\eps A)^{-1}u_{ttt}\}|^2+\frac{\gamma}{4}\io |\Del \{(1+\eps A)^{-1}u_{tt}\}|^2
+\frac{\mu}{4}\io |\Del \{(1+\eps A)^{-1}u_{t}\}|^2\\[1mm]&+\frac{\lambda}{4}\io |\Del \{(1+\eps A)^{-1}u\}|^2
\end{split}
\ee 
for a.e.\ $t\in (0,\tm)$.
Applying Young's inequality once more together with (\ref{E112.4}) we estimate
\begin{align}
\label{E112.6}
&-\al\io |\na \{(1+\eps A)^{-1}u_{ttt}\}|^2+\varrho \io \Del\{(1+\eps A)^{-1} u_{t}\}\cd \Del \{(1+\eps A)^{-1}u_{tt}\}\nonumber
\\&+2\mu \io \Del \{(1+\eps A)^{-1}u_{tt}\}\cd \Del \{(1+\eps A)^{-1}u_{t}\}
+2\lambda \io \Del \{(1+\eps A)^{-1}u_{t}\}\cd \Del \{(1+\eps A)^{-1}u\}\nonumber\\
\leq & |\al |\io |\na \{(1+\eps A)^{-1}u_{ttt}\}|^2+(\varrho+\mu)\io |\Del \{(1+\eps A)^{-1}u_{tt}\}|^2+(\mu+\lambda)\io |\Del \{(1+\eps A)^{-1}u_{t}\}|^2\nonumber\\
&+\lambda\io |\Del \{(1+\eps A)^{-1}u\}|^2\nonumber\\[1.4mm]
\leq& c_{1}\F^{(4)}_{\eps}(t)
\end{align}
where $c_{1}:= 2|\al|+\frac{4(\varrho+\mu)}{\gamma}+\frac{4(\mu+\lambda)}{\mu}+4.$ Another application of Young's inequality along with (\ref{P3}) yields 
\begin{align}
\label{E112.7}
&\io \na \{(1+\eps A)^{-1}u_{ttt}\}\cd \na \Big\{(1+\eps A)^{-1}\left\{f'(u)u_{ttt}+3f''(u)u_{t}u_{tt}+f'''(u)u^3_{t}+f'(u)u_{tt}+f''(u)u^2_{t}\right\}\Big\}\nonumber\\[1mm]
\leq& \io |\na \{(1+\eps A)^{-1}u_{ttt}\}|^2+\io |\na \{f'(u)u_{ttt}\}|^2+9\io |\na \{f''(u)u_{t}u_{tt}\}|^2+\io |\na \{f'''(u)u^3_{t}\}|^2\nonumber\\
&+\io |\na \{f'(u)u_{tt}\}|^2+\io |\na \{f''(u)u^2_{t}\}|^2\nonumber\\[1mm]
\leq& 2\F^{(4)}_{\eps}(t)+\io |\na \{f'(u)u_{ttt}\}|^2+9\io |\na \{f''(u)u_{t}u_{tt}\}|^2+\io |\na \{f'''(u)u^3_{t}\}|^2\nonumber\\
&+\io |\na \{f'(u)u_{tt}\}|^2+\io |\na \{f''(u)u^2_{t}\}|^2, \qquad \,\mbox{for a.e.}\; t\in (0,\tm).
\end{align}
By virtue of (\ref{E16.1}) and recalling that $f\in C^4(\R)$, there exist $c_{2},c_{3},c_{4},c_{5}>0$ such that 
$$ |f'(u)|^2\leq c_{2}, \quad\,\, |f''(u)|^2\leq c_{3}, \quad\,\, |f'''(u)|^2\leq c_{4}, \quad\,\, |f''''(u)|^2\leq c_{5},\qquad \,\mbox{in } \Om\times (0,\tm).$$
Therefore
\be{E112.8}
\spl
&\io |\na \{f'(u)u_{ttt}\}|^2+9\io |\na \{f''(u)u_{t}u_{tt}\}|^2+\io |\na \{f'''(u)u^3_{t}\}|^2\\
&+\io |\na \{f'(u)u_{tt}\}|^2+\io |\na \{f''(u)u^2_{t}\}|^2\\[0.7mm]
\leq&\, 2c_{3}\io |\na u|^2u^2_{ttt}+2c_{2}\io |\na u_{ttt}|^2+27c_{4} \io |\na u|^2|u_{t}|^2|u_{tt}|^2+27 c_{3}\io |\na u_{t}|^2|u_{tt}|^2\\&+27c_{3}\io |u_{t}|^2|\na u_{tt}|^2
+ 2c_{5}\io |\na u|^2|u_{t}|^{6}+18c_{4}\io |\na u_{t}|^2|u_{t}|^{4}+2c_{3}\io |\na u|^2|u_{tt}|^2
\\&+2c_{2}\io |\na u_{tt}|^2+2c_{4}\io |\na u|^2|u_{t}|^4
+8c_{3}\io |\na u_{t}|^2|u_{t}|^2
\end{split}
\ee 
for a.e.\ $t\in (0,\tm)$. From elliptic regularity theory and the embeddings $W^{2,2}(\Om)\hookrightarrow L^\infty(\Om)$, $W^{2,2}(\Om)\hookrightarrow W^{1,4}(\Om)$ and $W^{1,2}(\Om)\hookrightarrow L^4(\Om)$, we may choose $c_{6}, c_{7}, c_{8}, c_{9}>0$ such that 
\be{IM1}
\|\vp \|_{L^{\infty}(\Om)}\leq c_{6}\|\Del \vp\|_{L^2(\Om)} \qquad\,\,\,\hspace{0.6mm} \mbox{for all } \vp \in W^{2,2}(\Om)\cap W_{0}^{1,2}(\Om),
\ee 
\be{IM2}
\|\na \vp \|_{L^{4}(\Om)}\leq c_{7}\|\Del \vp\|_{L^2(\Om)} \qquad \mbox{for all } \vp \in W^{2,2}(\Om)\cap W_{0}^{1,2}(\Om)\vspace{0.5mm}
\ee 
\be{IM3}
\|\vp \|_{L^{4}(\Om)}\leq c_{8}\|\na \vp\|_{L^2(\Om)} \qquad \mbox{for all } \vp \in W_{0}^{1,2}(\Om)\qquad\quad\,\,\,\,\,
\ee 
whereas by the Poincar\'e inequality 
\be{IMM3}
\|\na \vp \|_{L^{2}(\Om)}\leq c_{9}\|\Del \vp\|_{L^2(\Om)} \qquad \mbox{for all } \vp \in W^{2,2}(\Om)\cap W_{0}^{1,2}(\Om).
\ee 
On the other hand,  Lemmas \ref{lem1.9}-\ref{lem11} ensure by virtue of (\ref{E16.1}) the existence of $C_{1},C_{2},C_{3}>0$ such that 
\be{IM4}
\|\Del u(\cd ,t)\|_{L^2(\Om)}\leq C_{1}, \qquad \hspace{0.5mm}\mbox{for a.e. } t\in (0,\tm),
\ee 
\be{IM5}
\|\Del u_{t}(\cd ,t)\|_{L^2(\Om)}\leq C_{2}, \qquad \mbox{for a.e. } t\in (0,\tm),
\ee 
as well as 
\be{IM6}
\|\na u_{tt}(\cd ,t)\|_{L^2(\Om)}\leq C_{3}, \qquad \mbox{for a.e. } t\in (0,\tm).
\ee 
Applying the Cauchy-Schwarz inequality together with (\ref{IM2})-(\ref{IM3}) and (\ref{IM4}) to the right hand side of (\ref{E112.8}) gives 
\bee
\spl
2c_{3}\io |\na u|^2u^2_{ttt}\leq 2c_{3}\|\na u\|^2_{L^4(\Om)}\| u_{ttt}\|^2_{L^4(\Om)}\leq 2c_{3}c^2_{7}c^2_{8}\|\Del u\|^2_{L^2(\Om)}\|\na u_{ttt}\|^2_{L^2(\Om)}
\leq c_{10}\io |\na u_{ttt}|^2
\end{split}
\eee 
where $c_{10}:= 2c_{3}c^2_{7}c^2_{8} C^2_{1}$. Similarly, employing
(\ref{IM1})-(\ref{IM3}) and (\ref{IM4})-(\ref{IM6})
\bee
\spl
27c_{4} \io |\na u|^2|u_{t}|^2|u_{tt}|^2\leq 27c_{4} & \|u_{t}\|^2_{L^{\infty}(\Om)}\|\na u\|^2_{L^{4}(\Om)}\|u_{tt}\|^2_{L^{4}(\Om)}\\[0.7mm]
\leq & 27c_{4} c^2_{6}c^2_{7}c^2_{8}\|\Del u\|^2_{L^2(\Om)}\|\Del u_{t}\|^2_{L^2(\Om)}\|\na u_{tt}\|^2_{L^2(\Om)}\\[0.7mm]
\leq & c_{11}
\end{split}
\eee 
where $c_{11}:= 27c_{4} c^2_{6}c^2_{7}c^2_{8} C^2_{1}C^2_{2}C^2_{3}$. 
Likewise, there exists $c_{12}>0$ such that 
\bee
\spl
&27 c_{3}\io |\na u_{t}|^2|u_{tt}|^2+27c_{3}\io |u_{t}|^2|\na u_{tt}|^2+ 2c_{5}\io |\na u|^2|u_{t}|^{6}
+18c_{4}\io |\na u_{t}|^2|u_{t}|^{4}\\&+2c_{3}\io |\na u|^2|u_{tt}|^2
+2c_{2}\io |\na u_{tt}|^2+2c_{4}\io |\na u|^2|u_{t}|^4
+8c_{3}\io |\na u_{t}|^2|u_{t}|^2\\[0.5mm]
\leq &\, 27c_{3}\|\na u_{t}\|^{2}_{L^4(\Om)}\|u_{tt}\|^{2}_{L^4(\Om)}+27c_{3}\|u_{t}\|^2_{L^\infty(\Om)}\|\na u_{tt}\|^2_{L^2(\Om)}\\
&+2c_{5}\|u_{t}\|^6_{L^{\infty}(\Om)}\|\na u\|^2_{L^2(\Om)}+18c_{4}\|u_{t}\|^4_{L^{\infty}(\Om)}\|\na u_{t}\|^2_{L^2(\Om)}+2c_{3}\|\na u\|^2_{L^{4}(\Om)}\|u_{tt}\|^2_{L^4(\Om)}\\
&+2c_{2}\|\na u_{tt}\|^2_{L^2(\Om)}+2c_{4}\|u_{t}\|^4_{L^{\infty}(\Om)}\|\na u\|^2_{L^2(\Om)}+8c_{3}\|\na u_{t}\|^2_{L^{4}(\Om)}\|u_{t}\|^2_{L^{4}(\Om)}\\
\leq &\, c_{12}.
\end{split}
\eee 
Inserting these estimates into (\ref{E112.8}) we obtain 
\be{E112.08}
\spl
&\quad\io |\na \{f'(u)u_{ttt}\}|^2+9\io |\na \{f''(u)u_{t}u_{tt}\}|^2\\[0.5mm]
&\quad+\io |\na \{f'''(u)u^3_{t}\}|^2
+\io |\na \{f'(u)u_{tt}\}|^2+\io |\na \{f''(u)u^2_{t}\}|^2\\
&\leq\, (2c_{2}+c_{10})\io |\na u_{ttt}|^2+c_{11}+c_{12} 
\end{split}
\ee
for a.e.\ $t\in (0,\tm)$. Consequently, combining (\ref{E112.5}) and (\ref{E112.6})-(\ref{E112.7}) with (\ref{E112.08}), we arrive at 
\bee
\frac{d}{dt}\F^{(4)}_{\eps}(t)+\frac{\bet}{2}\frac{d}{dt}\io |\na \{(1+\eps A)^{-1}u_{tt}\}|^2\leq c_{13}\F^{(4)}_{\eps}(t)+c_{14}\io |\na u_{ttt}|^2+c_{15}
\eee 
for a.e.\ $t\in (0,\tm)$, where $c_{13}:=c_{1}+2$, $c_{14}:=  2c_{2}+c_{10}$ and $c_{15}:= c_{11}+c_{12}$. Integrating with respect to time and then integrating by parts, we obtain 
\be{E112.20}
\spl 
&\,\F^{(4)}_{\eps}(t)+\frac{\bet}{2}\io |\na \{(1+\eps A)^{-1}u_{tt}\}|^2+\frac{\bet c_{13}}{2}\int_{0}^{t}e^{c_{13}(t-s)}\io |\na \{(1+\eps A)^{-1}u_{tt}\}|^2\\[0.5mm]\leq& \F^{(4)}_{\eps}(0)+\frac{\bet}{2}\io |\na \{(1+\eps A)^{-1}u_{2}\}+c_{14}\int_{0}^{t}e^{c_{13}(t-s)}\io |\na u_{ttt}|^2+c_{15}\int_{0}^{t}e^{c_{13}(t-s)}\,ds\\[0.5mm]
\leq& c_{16}+c_{17}\int_{0}^{t}\io |\na u_{ttt}|^2
\end{split} 
\ee 
for a.e.\ $t\in (0, \tm)$, where 
$$c_{16}:=\sup_{\eps>0}\F^{(4)}_{\eps}(0)+\sup_{\eps>0}\frac{\bet}{2}\io |\na \{(1+\eps A)^{-1}u_{2}\}|^2+\frac{c_{15}}{c_{13}}e^{c_{13}\tm}$$ and $c_{17}:= c_{14}e^{c_{13}\tm}$. Both these constants are finite due to the assumption $\tm<\infty$ and the fact that, by (\ref{P1})-(\ref{P3}) and the Cauchy-Schwarz inequality,
\bee
\spl
&\quad\,\F^{(4)}_{\eps}(0)+\frac{\bet}{2}\io |\na \{(1+\eps A)^{-1}u_{2}\}|^2\\[0.5mm]
&\leq \frac{1}{2}\io |\na u_{3}|^2+\left(\frac{\gamma}{2}+\del+\varrho\right)\io |\Del u_{2}|^2+(\del+\mu) \io |\Del u_{1}|^2+(\varrho+\lambda) \io |\Del u_{0}|^2+\frac{\bet}{2} \io |\na u_{2}|^2
\end{split}
\eee
for all $\eps >0$. Finally, by (\ref{P3})-(\ref{P4}) along with (\ref{P1})-(\ref{P2}) and (\ref{Def1}), we have 
$$ \na \{(1+\eps A)^{-1}u_{ttt}\}\rightharpoonup \na u_{ttt} \quad \mbox{in}\; L^2(\Om; \R^n)\qquad \mbox{ and }\qquad \Del \{(1+\eps A)^{-1}u_{tt}\}\rightharpoonup \Del u_{tt}\quad \mbox{in}\; L^2(\Om)$$
as $\eps \searrow 0$ for a.e. $t\in (0, \tm)$. Next, the lower semicontinuity of $L^2(\Om)$-norm under weak convergence together with (\ref{E112.20}) and (\ref{E112.4}) imply that the function 
$$ y(t):=\frac{1}{2}\io |\na u_{ttt}(\cd,t)|^2+\frac{\gamma}{4}\io |\Del u_{tt}(\cd,t)|^2 \qquad t\in (0, \tm)$$
fulfills 
\bee
\spl
y(t)\leq \liminf_{\eps\searrow 0} \F^{(4)}_{\eps}(t)
\leq  c_{16}+c_{17}\int_{0}^{t}\io |\na u_{ttt}|^2
\leq c_{16}+2c_{17}\int_{0}^{t}y(s)\,ds, \qquad\, \mbox{for a.e. } t\in (0, \tm).
\end{split}
\eee 
Gronwall's inequality and $\tm<\infty$ yield $y\in L^{\infty}([0,\tm))$, and (\ref{E112.1})-(\ref{E112.2}) follow.
\qed 
Invoking  Lemma \ref{lem2.4},  Lemma \ref{lem2.5} and Lemmas \ref{lem1.9}-\ref{lem2.11},
the proof of Theorem \ref{Th1} is finished.
%
\section{Finite-time blow-up:\ Proof of Theorem \ref{Th1.3}}
The analysis of the finite-time blow-up for (\ref{0}) begins with the following observation on the evolution of the projection of the solution onto the first eigenfunction. Once more, our analysis is based on the integrated equation (\ref{E15.07}).
\begin{lem}\label{lem5.1}
Suppose that (\ref{f})-(\ref{IN0}) hold, and let $u$ be the strong solution given by Theorem \ref{Th1}. Set 
\be{E114.1}
\kappa :=\io e_{1}
\ee 
and define 
\be{E114.2}
y(t):=\frac{1}{\kappa}\io u(\cd,t)e_{1}(\cd), \qquad t\in [0,\tm).
\ee 
Then  $y\in C^1([0,\tm))$ and 
\be{E114.3}
\spl
y'(t)+\al y(t)=&-(\bet+\lambda_{1}\gamma)\int_{0}^{t}y(s)\,ds-\del\lambda_{1}\int_{0}^{t}\int_{0}^{s}y(\sigma)\,d\sigma ds-\varrho \lambda_{1}\int_{0}^{t}\int_{0}^{s}\int_{0}^{\sigma}y(\xi)\,d\xi d\sigma ds\\
&+\frac{1}{\kappa}\io f(u)e_{1}+\frac{1}{\kappa}\io F(u)e_{1}+\frac{t^2}{2\kappa}\io \psi_{1}e_{1}+\frac{t}{\kappa}\io \psi_{2}e_{1}+\io \psi_{3} e_{1}
\end{split}
\ee 
where $\psi_{1},\psi_{2},\psi_{3}$ are defined in  (\ref{E15.2}), (\ref{E15.3}), (\ref{E15.4}) respectively, while  $F$ is defined in (\ref{E15.01}).
\end{lem}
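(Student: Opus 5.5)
The plan is to test the thrice-integrated identity (\ref{E15.07}) of Lemma \ref{lem2.3} against the first eigenfunction $e_1$ and divide by $\kappa$. First I check regularity. By (\ref{T1}), $u,u_t\in C^0([0,\tm);L^2(\Om))$. Since $e_1\in L^2(\Om)$ is fixed, $y(t)=\frac1\kappa\io u(\cd,t)e_1$ belongs to $C^1([0,\tm))$ with $y'(t)=\frac1\kappa\io u_t(\cd,t)e_1$; this uses that $u(\cd,t)-u(\cd,s)=\int_s^t u_t$ in $L^2(\Om)$. Likewise, Fubini gives
\[
\frac1\kappa\io v(\cd,t)e_1=\int_0^t y(s)\,ds ,
\]
with the analogous iterated-integral formulas for $w$ and $z$.

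Next I treat the elliptic terms. Since $u\in L^\infty_{loc}([0,\tm);W^{2,2}(\Om)\cap W^{1,2}_0(\Om))$, the time integrals $v,w,z$ lie in $W^{2,2}(\Om)\cap W_0^{1,2}(\Om)$ for each $t$. Because $e_1\in C^\infty(\bom)$ vanishes on $\pO$, two integrations by parts (Green's formula) give
\[
\io \Del v\, e_1=\io v\,\Del e_1=-\lambda_1\io v\,e_1 ,
\]
and similarly for $w$ and $z$. Hence the terms $-\gamma\Del v$, $-\del\Del w$ and $-\varrho\Del z$ contribute, after division by $\kappa$, $\gamma\lambda_1\int_0^t y$, $\del\lambda_1\int_0^t\int_0^s y$ and $\varrho\lambda_1\int_0^t\int_0^s\int_0^\sigma y$. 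The term $\bet v$ contributes $\bet\int_0^t y$. Moving all of these to the right-hand side produces exactly the linear part of (\ref{E114.3}).

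For the right-hand side of (\ref{E15.07}), I simply multiply by $e_1/\kappa$ and integrate. Here $f(u)$ and $F(u)$ are bounded on $\Om\times(0,t)$ by the continuity of $u$ in $C^0(\bom)$, and $\psi_1,\psi_2,\psi_3\in L^2(\Om)$ by (\ref{IN0}). The $\psi_3$-term should carry the factor $\frac1\kappa$ as well; I would read the last term of (\ref{E114.3}) with this normalization, which is what the computation yields.

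The main technical point, though mild, is that (\ref{E15.07}) holds only a.e.\ in $\Om\times(0,\tm)$, whereas (\ref{E114.3}) is claimed for every $t$. I would handle it as follows. Fubini shows that for a.e.\ $t$ the identity holds a.e.\ in $\Om$, so the tested identity holds for a.e.\ $t$. Both sides are continuous in $t$: the left side because $y\in C^1$, and the right side because of the continuity of $u$ in $C^0(\bom)$ and of its time integrals. Therefore the identity extends to all $t\in[0,\tm)$.
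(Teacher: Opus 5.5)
Your proof is correct and follows the paper's route: you test (\ref{E15.07}) against $e_1/\kappa$, convert the $\Delta v$, $\Delta w$, $\Delta z$ terms using $-\Delta e_1=\lambda_1 e_1$ and the boundary conditions, and read off the iterated integrals of $y$. You are also right that the $\psi_3$-term should carry the factor $\frac{1}{\kappa}$. The paper's (\ref{E114.4}) and (\ref{E114.3}) drop it, which is harmless because Lemma~\ref{lem1.15} only uses the positivity of $\int_\Omega \psi_3 e_1$. Finally, your continuity argument extending the a.e.\ identity to every $t$ supplies a detail that the paper leaves implicit.
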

\proof
The asserted regularity  of $y$ follows directly from (\ref{T1}). Next, using (\ref{E15.07}), we obtain 
\be{E114.4}
\spl
y'(t)+\al y(t)=&\,\frac{1}{\kappa}\io \left\{-\bet v+\gamma \Del v+\del \Del w+\varrho \Del z+f(u)+F(u)+\frac{t^2}{2}\psi_{1}+t\psi_{2}+\psi_{3}\right\}\cd e_{1}\\
=& -\frac{\bet}{\kappa}\io\left\{ \int_{0}^{t}u(\cd,s)\,ds\right\}\cd e_{1}+\frac{\gamma}{k}\io \Del v\cd e_{1}+\frac{\del}{\kappa}\io \Del w\cd e_{1}+\frac{\varrho}{\kappa}\io \Del z\cd e_{1}\\
&+\frac{1}{\kappa}\io f(u)e_{1}+\frac{1}{\kappa}\io F(u)e_{1}+\frac{t^2}{2\kappa}\io \psi_{1} e_{1}+\frac{t}{\kappa}\io \psi_{2}e_{1}+\io \psi_{3} e_{1}
\end{split}
\ee 
{for all} $t\in (0,\tm)$.
Since $-\Del e_{1}=\lambda_{1}e_{1}$ in $\Om$ and $e_{1}=0$ on $\pa\Om$, integrating by parts and using the definitions of  $v$, $w$ and $z$ we have
$$ \frac{\gamma}{\kappa}\io \Del v\cd e_{1}=-\frac{\lambda_{1}\gamma}{k}\io  v e_{1}=-\frac{\lambda_{1}\gamma}{\kappa}\io  \left\{\int_{0}^{t}u(\cd,s)\,ds\right\}\cd  e_{1}=-\lambda_{1}\gamma \int_{0}^{t}y(s)\,ds,$$
$$ \frac{\del }{\kappa}\io \Del w\cd e_{1}=-\frac{\lambda_{1}\del}{\kappa}\io  w e_{1}=-\frac{\lambda_{1}\del}{\kappa}\io  \left\{\int_{0}^{t}\int_{0}^{s}u(\cd,\sigma)\,d\sigma ds\right\}\cd e_{1}=-\lambda_{1}\del \int_{0}^{t}\int_{0}^{s}y(\sigma)\,d\sigma ds,$$
and
$$ \frac{\varrho}{\kappa}\io \Del z\cd e_{1}=-\frac{\lambda_{1}\varrho}{\kappa}\io  \left\{\int_{0}^{t}\int_{0}^{s}\int_{0}^{\sigma}u(\cd,\xi)\,d\xi d\sigma ds\right\} \cd e_{1}=-\lambda_{1}\varrho \int_{0}^{t}\int_{0}^{s}\int_{0}^{\sigma}y(\xi)\,d\xi d\sigma ds$$
for all $t\in (0,\tm)$. Inserting these identities into (\ref{E114.4}) we arrive at (\ref{E114.3}).
\qed
Within the additional assumptions on $f$ imposed in Theorem \ref{Th1.3}, and for suitably chosen initial data, the integro-differential equation (\ref{E114.3}) can be reduced to a first-order ordinary differential inequality whose validity necessarily breaks down within a sufficiently short time. 
\begin{lem}\label{lem1.15}
Assume that (\ref{f}) and (\ref{T1.3.1})-(\ref{T1.3.3}) hold. For every $T_{0}>0$, there is $K_{0}=K_{0}(T_{0})>0$ such that whenever $u_{0}\in W^{4,2}(\Om)\cap W_{0}^{1,2}(\Om)$ satisfies (\ref{T1.3.4}), one can find $K_{1}=K_{1}(u_{0})>0$  with the property that if $u_{1}\in W^{4,2}(\Om)\cap W_{0}^{1,2}(\Om)$ fulfils (\ref{T1.3.5}), we may choose $K_{2}=K_{2}(u_{1})>0$ such that for every $u_{2}\in W^{2,2}(\Om)\cap W_{0}^{1,2}(\Om)$ obeying (\ref{T1.3.6}), there exists $K_{3}=K_{3}(u_{2})>0$  such that for each $u_{3}\in W^{2,2}(\Om)\cap W_{0}^{1,2}(\Om)$ satisfying (\ref{T1.3.7}) the corresponding strong solution $u$ to (\ref{0}) given by Theorem \ref{Th1} verifies
\be{E116.01}
\tm\leq T_{0}.
\ee 
\end{lem}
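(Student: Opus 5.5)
The plan is Kaplan's first-eigenvalue argument applied to the integro-differential identity (\ref{E114.3}) of Lemma \ref{lem5.1}, followed by a continuity (bootstrap) argument. I argue by contradiction: suppose $\tm>T_{0}$. Then $y$ from (\ref{E114.2}) is $C^1$ on $[0,T_{0}]$, hence bounded there. I will show that, for the nested choices of $K_0,\dots,K_3$, $y$ must blow up before $T_{0}$, which is the contradiction.

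\textbf{Step 1 (lower bounds for the nonlinear terms).} Since $e_1>0$, the measure $\kappa^{-1}e_1\,dx$ is a probability measure on $\Om$. By (\ref{T1.3.1}) and Jensen's inequality,
$\frac1\kappa\io f(u)e_1\ge f(y)$. Likewise, integrating in time,
$\frac1\kappa\io F(u)e_1=\int_0^t\frac1\kappa\io f(u)e_1\ge\int_0^t f(y(s))\,ds$.
Convexity together with $f(0)=0$ gives $f(\xi)\ge f'(0)\xi$. By (\ref{T1.3.2}), for every $L>0$ there is $\xi_L$ with $f(\xi)\ge L\xi\ge0$ for $\xi\ge\xi_L$. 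Consequently the $F$-term is nonnegative as long as $y\ge\xi_L$ on $[0,t]$.

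\textbf{Step 2 (signs of the data terms; this fixes the hierarchy of constants).} I would expand the three data terms:
\begin{align*}
\io\psi_3e_1&=\kappa y_1+\al\kappa y_0-\io f(u_0)e_1,\\
\io\psi_2e_1&=\io u_2e_1+(\al+\gamma\lambda_1)\io u_1e_1+(\bet+\gamma\lambda_1)\io u_0e_1-\io\big(f'(u_0)u_1+f(u_0)\big)e_1,\\
\io\psi_1e_1&=\io u_3e_1+\al\io u_2 e_1+(\bet+\gamma\lambda_1)\io u_1e_1+\del\lambda_1\io u_0e_1-\io\big(f''(u_0)u_1^2+f'(u_0)(u_1+u_2)\big)e_1.
\end{align*}
Here $y_k:=\kappa^{-1}\io u_ke_1$, and I used $\io(-\Del\vp)e_1=\lambda_1\io\vp e_1$. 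Each expression is made nonnegative by the newest datum, which is exactly the nesting in the statement:
\begin{itemize}
\item $K_1(u_0)$ is chosen so that $\io\psi_3e_1\ge0$; in addition I impose $y'(0)=y_1>0$.
\item $K_2(u_1)$ is chosen so that $\io\psi_2e_1\ge0$.
\item $K_3(u_2)$ is chosen so that $\io\psi_1e_1\ge0$.
\end{itemize}
With these choices all $\psi$-terms in (\ref{E114.3}) can be dropped.

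\textbf{Step 3 (bootstrap and ODE comparison).} Let
$S:=\{t\in(0,\tm):y'>0\text{ on }[0,t]\}$, which is nonempty because $y'(0)>0$. On $S$ the function $y$ is increasing, so
\[
\int_0^ty\le T_0\,y,\qquad \int_0^t\!\!\int_0^sy\le T_0^2\,y,\qquad \int_0^t\!\!\int_0^s\!\!\int_0^\sigma y\le T_0^3\,y .
\]
Hence, for $t\le T_0$,
\[
y'\ge f(y)-C_0y,\qquad C_0:=|\al|+(\bet+\lambda_1\gamma)T_0+\del\lambda_1T_0^2+\varrho\lambda_1T_0^3 .
\]
Now choose $K_0=K_0(T_0)$ so that $y(0)>K_0$ implies:
\begin{itemize}
\item $f(\xi)\ge2C_0\xi>0$ for all $\xi\ge K_0$;
\item $K_0\ge\xi_0$ and $\int_{K_0}^\infty\frac{2\,d\xi}{f(\xi)}<T_0$, which is possible by (\ref{T1.3.3}).
\end{itemize}
Then on $S\cap(0,T_0]$ we get $y'\ge f(y)/2>0$. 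So $y'$ cannot vanish first at some $t^*<\min\{T_0,\tm\}$: at such a point $y(t^*)\ge y(0)>K_0$ forces $y'(t^*)>0$. Therefore $S\supset(0,\min\{T_0,\tm\})$. Integrating $y'/f(y)\ge\frac12$ gives $\frac t2\le\int_{y(0)}^{y(t)}\frac{d\xi}{f(\xi)}<\frac{T_0}{2}$, so $y$ reaches $+\infty$ strictly before $T_0$. This contradicts the boundedness of $y$ on $[0,T_0]$, and hence $\tm\le T_0$.

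I expect the main obstacle to be the bootstrap in Step 3. The memory terms carry the wrong sign, and they can only be absorbed through monotonicity of $y$. Monotonicity in turn has to be kept alive using $y'\ge f(y)-C_0y$, which requires $K_0$ to depend on $T_0$ through $C_0$. The remaining care is bookkeeping: one must check that the positivity of each $\io\psi_je_1$ really only needs the newly chosen datum, given the earlier ones. This holds because $\psi_3$ involves only $u_0,u_1$, $\psi_2$ involves only $u_0,u_1,u_2$, and $\psi_1$ additionally involves $u_3$, each entering linearly through $\io u_ke_1$.
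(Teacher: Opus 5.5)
Your proposal is correct and follows essentially the same route as the paper: Jensen's inequality applied to the first-mode identity (\ref{E114.3}), nested choices of $K_1,K_2,K_3$ that make each $\io\psi_j e_1$ nonnegative, a monotonicity bootstrap that absorbs the memory terms (with $K_0$ depending on $T_0$), and the final contradiction via $\int^\infty d\xi/f(\xi)$. The differences are cosmetic. You drop the $F(u)$-term as nonnegative rather than bounding it by $f(\xi)\ge f'(0)\xi$. Your expansion of $\io\psi_2e_1$ also contains a spurious $\gamma\lambda_1\io u_1e_1$, since $\psi_2$ involves $-\gamma\Del u_0$ rather than $-\gamma\Del u_1$; this is harmless because $K_2$ may depend on $u_0$ and $u_1$ anyway.
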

\proof
Fix $T_{0}>0$. In view of  (\ref{T1.3.3}), we may select $\xi_{1}=\xi_{1}(T_{0})>\xi_{0}$ such that 
\be{E116.1}
\int_{\xi_{1}}^{\infty}\frac{d\xi}{f(\xi)}\leq \frac{T_{0}}{4}.
\ee 
Assumption (\ref{T1.3.2}) furthermore allows us to choose $\xi_{2}=\xi_{2}(T_{0})> \xi_{0}$ such that
\be{E116.3}
\qquad\qquad\qquad\,\,\,\,\,\, |\al |\cd\frac{\xi}{f(\xi)}\leq \frac{1}{8}\qquad \mbox{for all }\, \xi\geq \xi_{2},
\ee 
\be{E116.4}
|\bet+\lambda_{1}\gamma-f'(0)|T_{0}\cd\frac{\xi}{f(\xi)}\leq \frac{1}{8}\qquad \mbox{for all }\, \xi\geq \xi_{2},
\ee 
\be{E116.5}
\frac{\del\lambda_{1}T^2_{0}}{2}\cd\frac{\xi}{f(\xi)}\leq \frac{1}{8}\qquad \mbox{for all }\, \xi\geq \xi_{2},
\ee 
along with
\be{E116.55}
\frac{\varrho\lambda_{1}T^3_{0}}{6}\cd\frac{\xi}{f(\xi)}\leq \frac{1}{8}\qquad \mbox{for all }\, \xi\geq \xi_{2}.
\ee 
Then we define 
\be{E116.6}
K_{0}\equiv K_{0}(T_{0}):=\kappa\max\{\xi_{1}, \xi_{2}\}.
\ee 
Assume now that $u_{0}\in W^{4,2}(\Om)\cap W^{1,2}_{0}(\Om)$ satisfies (\ref{T1.3.4}) and choose $K_{1}=K_{1}(u_{0})>0$ such that 
\be{E116.7}
K_{1}> -\al \io u_{0}e_{1}+\io f(u_{0})e_{1}.
\ee 
Let $u_{1}\in W^{4,2}(\Om)\cap W_{0}^{1,2}(\Om)$ fulfilling (\ref{T1.3.5}). We fix $K_{2}=K_{2}(u_{1})>0$ so that 
\be{E116.08}
K_{2}> -\al \io u_{1}e_{1}-\bet \io u_{0}e_{1}+\gamma\io \Del u_{0}e_{1}+\io f'(u_{0})u_{1}e_{1}+\io f(u_{0})e_{1}.
\ee 
Next, let $u_{2}\in W^{2,2}(\Om)\cap W_{0}^{1,2}(\Om)$ obeying (\ref{T1.3.6}). We select $K_{3}=K_{3}(u_{2})>0$ satisfying
\be{E116.8}
\spl
K_{3}>& -\al\io u_{2}e_{1}-\bet \io u_{1}e_{1}+\gamma\io  \Del u_{1}e_{1}+\del\io  \Del u_{0}e_{1}+\io f''(u_{0})u^2_{1}e_{1}\\&+\io f'(u_{0})u_{2}e_{1}+\io f'(u_{0})u_{1}e_{1}.
\end{split}
\ee 
Finally, suppose that $u_{3}\in W^{2,2}(\Om)\cap W_{0}^{1,2}(\Om)$ verifies (\ref{T1.3.7}).
We argue that the corresponding strong solution $u$ enjoys the property stated in (\ref{E116.01}). To this end, assume by contradiction that
$\tm>T_{0}$, and let $y$ be as in Lemma \ref{lem5.1}. From (\ref{T1.3.5}) and (\ref{E116.7}), the term involving $\psi_{3}$  on the right-hand side of (\ref{E114.3}) can be estimated as 
\bee 
\spl
\io \psi_{3} e_{1}=\io u_{1}e_{1}+\al \io u_{0}e_{1}-\io f(u_{0})e_{1}
\geq \,K_{1}+\al \io u_{0}e_{1}-\io f(u_{0})e_{1}>0.
\end{split}
\eee 
Similarly, by (\ref{T1.3.6}) and (\ref{E116.08}),
\bee 
\spl
\io \psi_{2} e_{1}=&\io u_{2}e_{1}+\al \io u_{1}e_{1}+\bet \io u_{0}e_{1}-\gamma \io \Del u_{0}e_{1}-\io f'(u_{0})u_{1}e_{1}-\io f(u_{0})e_{1}\\
\geq& \,K_{2}+\al \io u_{1}e_{1}+\bet \io u_{0}e_{1}-\gamma \io \Del u_{0}e_{1}-\io f'(u_{0})u_{1}e_{1}-\io f(u_{0})e_{1}>0.
\end{split}
\eee 
Likewise, (\ref{T1.3.7}) together with (\ref{E116.8}) yield
\bee 
\spl
\io \psi_{1}e_{1}=&\io u_{3}e_{1}+\al \io u_{2}e_{1}+\bet \io u_{1}e_{1}-\gamma \io \Del u_{1}e_{1}-\del \io \Del u_{0}e_{1}-\io f''(u_{0})u^2_{1}e_{1}\\
&-\io f'(u_{0})u_{2}e_{1}-\io f'(u_{0})u_{1}e_{1}\\
\geq &\, K_{3}+ \,\al \io u_{2}e_{1}+\bet \io u_{1}e_{1}-\gamma \io \Del u_{1}e_{1}-\del \io \Del u_{0}e_{1}-\io f''(u_{0})u^2_{1}e_{1}\\
&-\io f'(u_{0})u_{2}e_{1}-\io f'(u_{0})u_{1}e_{1}>0.
\end{split}
\eee 
Consequently, from (\ref{E114.3}), we deduce that
\be{E114.20}
\spl
y'(t)>& -\al y(t)-(\bet+\lambda_{1}\gamma)\int_{0}^{t}y(s)\,ds-\del\lambda_{1}\int_{0}^{t}\int_{0}^{s}y(\sigma)\,d\sigma ds-\varrho \lambda_{1}\int_{0}^{t}\int_{0}^{s}\int_{0}^{\sigma}y(\xi)\,d\xi d\sigma ds\\
&+\frac{1}{\kappa}\io f(u)e_{1}+\frac{1}{\kappa}\io F(u)e_{1}
\end{split}
\ee 
for all $t\in (0,\tm)$. At this point, the convexity assumption (\ref{T1.3.1}) and the nonnegativity of $e_{1}$, together with the normalization $\io \frac{e_{1}}{\kappa}=1$ from (\ref{E114.1}), allow us to apply the Jensen's inequality 
\be{E1R}
f(y(t))=f\left(\io u(\cd,t)\cd \frac{e_{1}}{\kappa}\right)\leq \frac{1}{\kappa}\io f(u(\cd,t))e_{1},\qquad\, \mbox{for all }\, t\in (0, \tm).
\ee
Moreover, the convexity of $f$ and the fact that $f(0)=0$ give
\be{E116.30}
f(\xi)\geq f'(0)\xi\qquad \mbox{for all }\, \xi \in \R.
\ee 
Therefore, from (\ref{E1R}) we infer that
$$ \frac{1}{\kappa}\io F(u)e_{1}=\int_{0}^{t}\frac{1}{k}\io f(u(\cd,s))e_{1}\,ds\geq \int_{0}^{t}f(y(s))\,ds\geq f'(0)\int_{0}^{t}y(s)\,ds.$$
Substituting these estimates into (\ref{E114.20}) we have
\be{E114.21}
\spl
y'(t)>& -\al y(t)-(\bet+\lambda_{1}\gamma-f'(0))\int_{0}^{t}y(s)\,ds-\del\lambda_{1}\int_{0}^{t}\int_{0}^{s}y(\sigma)\,d\sigma ds\\
&-\varrho \lambda_{1}\int_{0}^{t}\int_{0}^{s}\int_{0}^{\sigma}y(\xi)\,d\xi d\sigma ds
+f(y(t))
\end{split}
\ee 
for all $t\in (0,\tm)$.  We now claim that
\be{E114.22}
y'(t)\geq \frac{1}{2}f(y(t))\qquad \mbox{for all }\, t\in (0,T_{0}).
\ee 
To establish this, let
\be{E114.23}
\wh T_{0}:=\sup\Big\{T\in (0, T_{0}): \, y(t)>\frac{K_{0}}{\kappa} \;\; \mbox{and}\;\; y'(t)>0\,\, \mbox{for all}\,\, t\in (0,T)\Big\}.
\ee 
This set is non-empty and  $\wh T_{0}>0$. Indeed  $y,y'$ are continuous on $[0,\tm)$ by Lemma \ref{lem5.1} whereas from (\ref{T1.3.4}) we have  $y(0) >\frac{K_{0}}{\kappa}$ and as a direct consequence of (\ref{E116.3}) we also have 
\bee
y'(0)>f(y(0))-\al y(0)=f(y(0))\left\{1-\frac{\al y(0)}{f(y(0))}\right\}\geq  f(y(0))\cd \frac{7}{8}>0.
\eee 
Now, on  $(0,\wh T_{0})$, we have  $y\geq \frac{K_{0}}{\kappa}\geq \xi_{2}$ by  (\ref{E114.23}) and (\ref{E116.6}). Hence, from (\ref{E116.3}), we find
\be{E116.400}
\frac{\al y(t)}{f(y(t))}\leq |\al |\cd \frac{y(t)}{f(y(t))}\leq \frac{1}{8}\qquad\, \mbox{for all }\, t\in (0,\wh T_{0}).
\ee 
Likewise, using  $y'\geq 0$ on $(0, \wh T_{0})$ together with (\ref{E116.4}), for all $t\in (0,\wh T_{0})$ we obtain 
\be{E116.40}
\spl
\big\{\bet+\lambda_{1}\gamma-f'(0)\big\}\cd\frac{\int_{0}^{t}y(s)\,ds}{f(y(t))}\leq& |\bet+\lambda_{1}\gamma-f'(0)\big|\cd \frac{ty(t)}{f(y(t))}\\[0.7mm]
\leq & |\bet+\lambda_{1}\gamma-f'(0)\big|T_{0}\cd \frac{y(t)}{f(y(t))}\\[0.7mm]
\leq & \frac{1}{8}
\end{split}
\ee 
since $\wh T_{0}\leq T_{0}$. By the same token, (\ref{E116.5}) yields
\be{E116.41}
\spl
\del \lambda_{1}\cd\frac{\int_{0}^{t}\int_{0}^{s}y(\sigma)\,d\sigma ds}{f(y(t))}\leq \del \lambda_{1}\cd  \frac{\frac{t^2}{2}y(t)}{f(y(t))}
\leq  \frac{\del \lambda_{1}T^2_{0}}{2}\cd \frac{y(t)}{f(y(t))}
\leq  \frac{1}{8}
\end{split}
\ee 
for all $t\in (0,\wh T_{0})$. Finally, from (\ref{E116.55}), 
\bee
\spl
\varrho\lambda_{1}\cd\frac{\int_{0}^{t}\int_{0}^{s}\int_{0}^{\sigma}y(\xi)\,d\xi d\sigma ds}{f(y(t))}\leq& \varrho \lambda_{1}\cd  \frac{\frac{t^3}{6}y(t)}{f(y(t))}
\leq  \frac{\varrho \lambda_{1}T^3_{0}}{6}\cd \frac{y(t)}{f(y(t))}
\leq  \frac{1}{8}
\end{split}
\eee 
for all $t\in (0,\wh T_{0})$. Combining the latter inequality with (\ref{E114.21}) and (\ref{E116.400})-(\ref{E116.41}), we arrive at
\be{E116.42}
y'(t)\geq \frac{1}{2}f(y(t))\qquad \mbox{for all }\, t\in (0,\wh T_{0}).
\ee 
Since $f$ is positive on $\big[\frac{K_{0}}{\kappa}, \infty)$, this inequality in particular shows that $\wh T_{0}=T_{0}$. Indeed, if $\wh T_{0}<T_{0}$, the continuity of $y'$ would imply that either  $y(\wh T_{0})=\frac{K_{0}}{\kappa}$ or $y'(\wh T_{0})=0$. But from (\ref{E116.42}) and the positivity of $f$ we would have both 
$y(\wh T_{0})>\frac{K_{0}}{\kappa}$ and $y'(\wh T_{0})>0$, which is a contradiction. Thus (\ref{E114.22}) follows from (\ref{E116.42}).  Integrating (\ref{E114.22}) on $(0,T_{0})$ and using $y(0)\geq \frac{K_{0}}{\kappa}$, we finally obtain 
\bee
\frac{T_{0}}{2}\leq \int_{y(0)}^{y(T_{0})}\frac{d\xi}{f(\xi)}\leq \int_{\frac{K_{0}}{\kappa}}^{\infty}\frac{d\xi}{f(\xi)}.
\eee 
By the definition  of $K_{0}$ and in view of (\ref{E116.1}), we now infer that 
\bee
\frac{T_{0}}{2}\leq \int_{\xi_{1}}^{\infty}\frac{d\xi}{f(\xi)}\leq \frac{T_{0}}{4}
\eee 
which is impossible. Therefore $\tm \leq T_{0}$, as desired. 
\qed
Exploiting Lemma \ref{lem1.15} and Theorem \ref{Th1}, we finish the proof of Theorem \ref{Th1.3}.

\section*{Acknowledgments} The first author thanks Professor Michael Winkler for fruitful discussions on the topic of this paper.
The second author is a member of the Gruppo Nazionale per l’Analisi Matematica,
la Probabilità e le loro Applicazioni (GNAMPA) of the Istituto Nazionale di Alta Matematica (INdAM).

\section*{Declarations}
{\bf Conflict of interest statement.} 
The authors declare that they have no conflict of interest.

\medskip
{\bf Data availability statement.} 
Data sharing is not applicable to this article as no datasets were
generated or analyzed during the study.



\begin{thebibliography}{99}
%

\bibitem{Benabbas2026}
Benabbas, I., Said-Houari, B., {\it Local well-posedness of a coupled Jordan--Moore--Gibson--Thompson--Pennes model of nonlinear ultrasonic heating}, Nonlinear Anal. Real World Appl. {\bf 90} (2026), 104546.

\bibitem{Black} Black, T., Winkler, M., {\it Global solutions and large time stabilization in a model for thermoacoustics in a standard linear solid}, Evol. Equ. Control Theory {\bf 25} (2026), 278--308.

\bibitem{Bon2} 
Bongarti, M., Charoenphon, S., Lasiecka, I., {\it Singular thermal relaxation limit for the Moore--Gibson--Thompson equation}. In: Semigroups of Operators:\ Theory and Applications (SOTA 2018), pp. 147--182. Springer (2019).

\bibitem{Bon1}
Bongarti, M., Charoenphon, S., Lasiecka, I., {\it Vanishing relaxation time dynamics of the Jordan--Moore--Gibson--Thompson equation arising in nonlinear acoustics}, \mbox{J. Evol. Equ.} {\bf 21} (2021),  3553--3584.

\bibitem{Caixeta2016}	Caixeta, A.H., Lasiecka, I., Domingos Cavalcanti, N.V., {\it On long time behavior of Moore--Gibson--Thompson equation with molecular relaxation}, Evol. Equ. Control Theory {\bf 5} (2016), 661--676.

\bibitem{Chand} Chandrasekharaiah, D.S., {\it Hyperbolic thermoelasticity:\ A review of recent literature}, Appl. Mech. Rev. {\bf 51} (1998), 705--729.

\bibitem{Chen2021}	Chen, W., Ikehata, R., {\it The Cauchy problem for the Moore--Gibson--Thompson equation in the dissipative case}, J. Differential Equations {\bf 292} (2021), 176--219.

\bibitem{Chen2026}	Chen, W., Liu, Y., Palmieri, A., Qin, X., {\it The influence of viscous dissipations on the nonlinear acoustic wave equation with second sound}, Nonlinear Anal. {\bf 273} (2026), 114220.

\bibitem{Chen2} Chen, W., Palmieri, A., {\it Nonexistence of global solutions for the semilinear Moore--Gibson--Thompson equation in the conservative case}, Discrete Contin. Dyn. Syst. {\bf 40} (2020), 5513--5540.

\bibitem{Chen1} Chen, W., Palmieri, A., {\it A blow-up result for the semilinear Moore--Gibson--Thompson equation with nonlinearity of derivative type in the conservative case}, Evol. Equ. Control Theory {\bf 10} (2021), 673--687.

\bibitem{Cla} Claes, L., Winkler, M., {\it  Local strong solutions in a quasilinear Moore-Gibson-Thompson type model for thermoviscoelastic evolution in a standard linear solid}, Nonlinear Differ. Equ. Appl. NoDEA {\bf 33}  (2026), 102.

\bibitem{Cleveland2012}
Cleveland, R.O., McAteer, J.A., Physics of shock-wave lithotripsy. In: {Smith's Textbook of Endourology}, pp. 527--558. Wiley Online Library (2012).

\bibitem{DeK} Dekkers, A., Rozanova-Pierrat, A., {\it Cauchy problem for the Kuznetsov equation}, Discrete Contin. Dyn. Syst. {\bf 39} (2019), 277--307.

\bibitem{Dell3} Dell'Oro, F., Lasiecka, I., Pata, V., {\it The Moore--Gibson--Thompson equation with memory in the critical case}, 
J. Diﬀerential Equations {\bf 261} (2016), 4188--4222.

\bibitem{Dell2} Dell'Oro, F., Pata, V., {\it On a fourth order equation of Moore--Gibson--Thompson type}, Milan J. Math. {\bf 85} (2017), 215--234.

\bibitem{Dell1} Dell'Oro, F., Pata, V., {\it A hierarchy of heat conduction laws}, Discrete Contin. Dyn. Syst. Ser. S {\bf 16} (2023), 2636--2648.

\bibitem{Fila1994}
Fila, M., Lieberman, G.M., {\it Derivative blow-up and beyond for quasilinear parabolic equations}, {Differential Integral Equations} \textbf{7} (1994), 811--821.

\bibitem{Frik} Fricke, T., Winkler, M., {\it Classical solutions to a model for heat generation during acoustic wave propagation in a standard linear solid}, Commun. Math. Anal. Appl. {\bf 5} (2026), 91--114.

\bibitem{Green1} Green, A.E., Naghdi, P.M., {\it On undamped heat waves in an elastic solid},  J. Therm. Stresses {\bf 15} (1992), 253--264.

\bibitem{Green2} Green, A.E., Naghdi, P.M., {\it Thermoelasticity without energy dissipation}, J. Elast. {\bf 31} (1993), 189--208.

\bibitem{Kaltenbacher2012}
Kaltenbacher, B., Lasiecka, I., Pospieszalska, M.K., {\it Well-posedness and exponential decay of the energy in the nonlinear Jordan--Moore--Gibson--Thompson equation arising in high intensity ultrasound}, Math. Models Methods Appl. Sci. {\bf 22} (2012), 1250035.

\bibitem{Kaltenbacher2019}
Kaltenbacher, B., Nikoli\'{c}, V., {\it The Jordan--Moore--Gibson--Thompson equation: well-posedness with quadratic gradient nonlinearity and singular limit for vanishing relaxation time}, Math. Models Methods Appl. Sci. {\bf 29} (2019), 2523--2556.

\bibitem{KaltenbacherNikolic2021}
Kaltenbacher, B., Nikoli\'{c}, V., {\it The inviscid limit of third-order linear and nonlinear acoustic equations}. SIAM J. Appl. Math. {\bf 81}  (2021), 1461--1482.

\bibitem{Las3} Lasiecka, I., {\it Global solvability of Moore--Gibson--Thompson equation with memory arising in nonlinear acoustics}, J. Evol. Equ. {\bf 17} (2017), 411--441.

\bibitem{Lebon1989}	Lebon, G., Cloot, A., {\it Propagation of ultrasonic sound waves in dissipative dilute gases and extended irreversible thermodynamics}, Wave Motion, {\bf 11} (1989), 23--32.

\bibitem{Lizama2019}	Lizama, C., Zamorano, S., {\it Controllability results for the Moore--Gibson--Thompson equation arising in nonlinear acoustics}, J. Diﬀerential Equations {\bf 266} (2019), 7813--7843.


\bibitem{Muller} M\"uller, I., Ruggeri, T., {\it Rational and Extended Thermodynamics}, Springer-Verlag, New York, 1998.

\bibitem{Nik4} Nikoli\'c, V., Said-Houari, B., {\it On the Jordan--Moore--Gibson--Thompson wave equation in hereditary fluids with quadratic gradient nonlinearity}, J. Math. Fluid Mech. {\bf 23} (2021), 1--24.

\bibitem{Nik5} Nikoli\'c, V., Said-Houari, B., {\it Mathematical analysis of memory effects and thermal relaxation in nonlinear sound waves on unbounded domains},  J. Diﬀerential Equations {\bf 273} (2021), 172--218.

\bibitem{Nik6}	Nikoli\'c, V., Said-Houari, B., {\it Asymptotic behavior of nonlinear sound waves in inviscid media with thermal and molecular relaxation}, Nonlinear Anal. Real World Appl. {\bf 62} (2021), 103384.

\bibitem{Nik3} Nikoli\'c, V., Winkler, M., {\it $L^{\infty}$ blow-up in the Jordan--Moore--Gibson--Thompson equation}, Nonlinear Anal. {\bf 247} (2024), 113600.

\bibitem{Quint4} Quintanilla, R., Racke, R., {\it Qualitative aspects in dual-phase-lag thermoelasticity}, SIAM J. Appl. Math. {\bf 66} (2006), 977--1001.

\bibitem{Quin} Quintanilla, R., Racke, R., {\it A note on stability in three-phase-lag heat conduction}, Int. J. Heat Mass Transf. {\bf 51} (2008), 24--29.

\bibitem{PQUI} Quittner, P., Souplet, P., {\it Superlinear parabolic problems. Blow-up, global existence and steady states}, Birkh\"auser Advanced Texts, Birkh\"auser, Basel/Boston/Berlin, 2007.

\bibitem{Racke2021}
Racke, R., Said-Houari, B., {\it Global well-posedness of the Cauchy problem for the 3D Jordan--Moore--Gibson--Thompson equation}, Commun. Contemp. Math. {\bf 23} (2021), 2050069.

\bibitem{Roy} Roy Choudhuri, S.K., {\it On a thermoelastic three-phase-lag model}, J. Therm. Stresses {\bf 30} (2007), 231--238.

\bibitem{Rudenko1977}	Rudenko, O.V., Soluyan, S.I., {\it Theoretical foundations of nonlinear acoustics}. Translated from the Russian by Robert T. Beyer. Studies in Soviet Science. Consultants Bureau, New York--London, (1977).

\bibitem{SaidHouari2022} Said-Houari, B., {\it Global well-posedness of the Cauchy problem for the Jordan--Moore--Gibson--Thompson equation with arbitrarily large higher-order Sobolev norms}, Discrete Contin. Dyn. Syst. {\bf 42} (2022),  4615--4635.

\bibitem{Simon-1996} Simon, J., {\it Compact sets in the space $L^p(0, T;B)$}, Ann. Mat. Pura Appl. {\bf 146} (1987), 65--96.

\bibitem{Souplet2002}
Souplet, P., {\it Gradient blow-up for multidimensional nonlinear parabolic equations with general boundary conditions}, Differential Integral Equations \textbf{15} (2002), 237--256.

\bibitem{Strau} Straughan, B., {\it Heat Waves}, Springer, New York, 2011.

\bibitem{Tzou1} Tzou, D.Y., {\it A unified approach for heat conduction from macro to micro-scales}, ASME J. Heat Transfer {\bf 117} (1995), 8--16.

\bibitem{Tzou2} Tzou, D.Y., {\it The generalized lagging response in small-scale and high-rate heating}, Int. J. Heat Mass Transf. {\bf 38} (1995), 3231--3240.

%
\end{thebibliography}
\end{document}